\newcommand{\abs}[1]{\left|#1\right|}
\newcommand{\norm}[1]{\left\| #1 \right\|}
\newcommand{\dual}[2]{\left\langle #1 , #2\right\rangle}
\newcommand{\A}{\mathcal{A}}
\newcommand{\xnice}{\mathbf{x}}
\newcommand{\ynice}{\mathbf{y}}
\newcommand{\Ho}{\mathcal{H}}
\newcommand{\Veps}{{\mathcal{V}_H^R}}
\newcommand{\Ueps}{{\mathcal{U}_H^R}}
\newcommand{\khat}{\hat{k}}
\newcommand{\ptilde}{\tilde{p}}
\newcommand{\R}{\mathbb{R}}
\newcommand{\C}{\mathbb{C}}
\newcommand{\N}{\mathbb{N}}
\DeclareMathOperator{\Div}{div}
\DeclareMathOperator{\Ext}{Ext}
\DeclareMathOperator{\id}{id}
\newtheorem{Theorem}{Theorem}
\newtheorem{Proposition}[Theorem]{Proposition}
\newtheorem{Lemma}[Theorem]{Lemma}
\begin{document}

\title[Decay via viscoelastic boundary damping]{On the decay rate for the wave equation with viscoelastic boundary damping}
\author[Reinhard Stahn]{Reinhard Stahn}

\begin{abstract} 
We consider the wave equation with a boundary condition of memory type. Under natural conditions on the acoustic impedance $\khat$ of the boundary one can define a corresponding semigroup of contractions \cite{DFMP2010a}. With the help of Tauberian theorems we establish energy decay rates via resolvent estimates on the generator $-\A$ of the semigroup. We reduce the problem of estimating the resolvent of $-\A$ to the problem of estimating the resolvent of the corresponding stationary problem. Under not too strict additional assumptions on $\khat$ we establish an upper bound on the resolvent. For the wave equation on the interval or the disk or for certain acoustic impedances making $0$ a spectral point of $\A$ we prove our estimates to be sharp.
\end{abstract}

\maketitle

{\let\thefootnote\relax\footnotetext{MSC2010: Primary 35B40, 35L05. Secondary 35P20, 47D06.}}
{\let\thefootnote\relax\footnotetext{Keywords and phrases: wave equation, viscoelastic, energy, resolvent estimates, singularity at zero, memory, $C_0$-semigroups.}}


\section{Introduction}\label{sec: Introduction}
Let $\Omega\subset\R^d$ be a bounded domain with Lipschitz boundary and $k:\R\rightarrow[0,\infty)$ be an integrable function, depending on the time-variable only and vanishing on $(-\infty,0)$. We consider a model for the reflection of sound on a wall \cite{PrussProbst1996}: 
\begin{equation}\label{eq: wave equation}
 \begin{cases}
  U_{tt}(t,x) - \Delta U(t,x) = 0 &  (t\in\R, x\in\Omega), \\
  \partial_n U(t,x) + k* U_t(t,x) = 0 & (t\in\R, x\in\partial\Omega).
 \end{cases}
\end{equation}
The function $U$ is called the \textit{velocity potential}. One can derive the acoustic pressure $p(t,x)=U_t(t,x)$ and fluid velocity $v(t,x)=-\nabla U(t,x)$ from $U$. The second formula gives the velocity potential its name. The convolution is defined by the usual formula $k*U_t(t,x)=\int_0^{\infty} k(r)U_t(t-r,x) dr$. Here $n$ is the outward normal vector of $\partial\Omega$, which exists almost everywhere for Lipschitz domains. Furthermore $\partial_n$ denotes the normal derivative on the boundary.

We assume that $k\in L^1(0,\infty)$ is a completely monotonic function\footnote{We use the convention to identify functions defined on the interval $[0,\infty)$ with functions defined on $\R$ but zero to the left of $t=0$.}. That is, there exists a positive Radon measure $\nu$ on $[0,\infty)$ such that $k(t)=\int_{[0,\infty)}e^{-\tau t} d\nu(\tau)$. We note here that the integrability assumption on $k$ is easily checked to be equivalent to 
\begin{equation}\label{eq: k is integrable}
 \nu(\{0\})=0 \text{ and } \int_0^{\infty}\tau^{-1} d\nu(\tau)<\infty.
\end{equation}
Let $e_{\tau}(t)=e^{-\tau t} 1_{[0,\infty)}(t)$ and
\begin{align*}
 \psi(t,\tau,x) = e_{\tau} * U_t(t,x) \quad (t\in\R, \tau\geq0, x\in\partial\Omega) .
\end{align*}
Informally it is not difficult to see that (\ref{eq: wave equation}) for $t>0$ in conjunction with the information that $p=U_t$ and $v=-\nabla U$ at time $t=0$ and $\int_0^{\infty}k(t)U_t(-t)dt$ at the boundary (the \emph{``essential''} data from the past) is equivalent to
\begin{equation}\label{eq: physical wave equation}
 \begin{cases}
  p_t(t,x) + \Div v(t,x) = 0 &  (t>0, x\in\Omega), \\
  v_t(t,x) + \nabla p(t,x) = 0 & (t>0, x\in\Omega), \\
  [\psi_t+\tau\psi-p](t,\tau,x) = 0 & (t>0, \tau>0, x\in\partial\Omega), \\
  \left[-v\cdot n + \int_0^{\infty} \psi(\tau) d\nu(\tau)\right](t,x) = 0 & (t>0, x\in\partial\Omega),
 \end{cases}
\end{equation}
and the information of the initial state $\xnice_0=(p_0, v_0, \psi_0)$ of the system at time $t=0$. It is important to observe that $p_0$ and $v_0$ cannot fully describe the system's state at $t=0$ since there are memory effects at the boundary. The missing data from the past is stored in the auxiliary function $\psi$.

Let us define the energy of the system to be the sum of potential, kinetic and boundary energy:
\begin{align*}
  E(\xnice_0) = \int_{\Omega} \abs{p_0(x)}^2 + \abs{v_0(x)}^2 dx + \int_0^{\infty}\int_{\partial\Omega} \abs{\psi_0(\tau,x)}^2 dS(x) d\nu(\tau) .
\end{align*}
Furthermore we introduce the homogeneous first order energy by
\begin{align*}
  E_1^{hom}(\xnice_0) =  \int_{\Omega} \abs{\nabla p_0}^2 + \abs{\Div v_0}^2 dx + \int_0^{\infty}\int_{\partial\Omega} \abs{\tau\psi_0-p_0}^2 dS d\nu(\tau).
\end{align*}
The first order energy is defined by $E_1=E+E_1^{hom}$. Let us define the (zeroth order) energy space, and the first order energy space by
\begin{align}
 \label{eq: energy space}
 \Ho &= \Ho_0= L^2(\Omega) \times \nabla H^1(\Omega) \times L^2_{\nu}((0,\infty)_{\tau}; L^2(\partial\Omega)) , \\
 \label{eq: first order energy space}
 \Ho_1 &= \{\xnice_0\in\Ho: E_1(\xnice_0)<\infty \text{ and } \left[-v_0\cdot n|_{\partial\Omega} + \int_0^{\infty} \psi_0(\tau) d\nu(\tau)\right] = 0\} .
\end{align}
Here $\nabla H^1(\Omega)$ is the space of vector fields $v\in (L^2(\Omega))^d$ for which there exists a function (potential) $U\in H^1(\Omega)$ such that $v=-\nabla U$. We note that the space of gradient fields $\nabla H^1(\Omega)$ is a closed subspace of $(L^2(\Omega))^d$ since $\Omega$ satisfies the Poincar\'e inequality\footnote{Poincar\'e inequality: If $\Omega$ is a bounded Lipschitz domain then there exists a $C>0$ such that for all $p\in H^1(\Omega)$ with  $\int_{\Omega}p=0$ we have $\int_{\Omega}\abs{p}^2 \leq C\int_{\Omega}\abs{\nabla p}^2$.}. To make the boundary condition, appearing in the definition of $\Ho_1$, meaningful we use that the trace operator $\Gamma:H^1(\Omega)\rightarrow H^{1/2}(\partial\Omega), u\mapsto u|_{\partial\Omega}$ is continuous and has a continuous right inverse. Therefore we see that $v\cdot n |_{\partial\Omega}$ is well defined as an element of $H^{-1/2}(\partial\Omega)=(H^{1/2}(\partial\Omega))^*$ for vector fields $v\in (L^2(\Omega))^d$ with $\Div v\in L^2(\Omega)$ by the relation
\begin{equation}\label{eq: vn}
 \dual{v\cdot n}{\Gamma u}_{H^{-\frac{1}{2}}\times H^{\frac{1}{2}}(\partial\Omega)} 
 = \int_{\Omega} \Div v \overline{u} + \int_{\Omega} v\cdot\nabla \overline{u}
\end{equation}
for all $u\in H^1(\Omega)$. Also note that $E_1(\xnice_0)<\infty$ implies $\psi_0\in L^1_{\nu}$ since $\psi_0(\tau)=\frac{\psi_0(\tau)}{1+\tau}+\frac{\Gamma p}{1+\tau}+\frac{\tau\psi_0(\tau)-\Gamma p}{1+\tau}$ and $(\tau\mapsto \frac{1}{1+\tau})\in L^1_{\nu}\cap L^2_{\nu}(0,\infty)$ by (\ref{eq: k is integrable}).\footnote{Here and in the following we abbreviate $L^p_{\nu}((0,\infty)_{\tau}; L^2(\partial\Omega))$ simply by $L^p_{\nu}$ for $p\in\{1,2\}$.} The quadratic forms $E$ and $E_1$ turn $\Ho$ and $\Ho_1$ into Hilbert spaces respectively.

An initial state $\xnice_0$ is called \emph{classical} if its first order energy is finite and the boundary condition is satisfied (i.e. $\xnice_0\in\Ho_1$). We say that $\xnice\in C^1([0,\infty); \Ho)\cap C([0,\infty);\Ho_1)$ is a \emph{(classical) solution} of (\ref{eq: physical wave equation}) if it satisfies the first two lines in the sense of distributions and the last two lines in the trace sense, i.e. with $v\cdot n$ defined by (\ref{eq: vn}) and $p$ replaced by $\Gamma p$. From Theorem \ref{thm: waves well posed} below plus basics from the theory of $C_0$-semigroups it follows that the initial value problem corresponding to (\ref{eq: physical wave equation}) is well-posed in the sense that for all classical initial data $\xnice_0\in\Ho_1$ there is a unique solution $\xnice$ with $\xnice(0)=\xnice_0$ and the mapping $\Ho_j\owns \xnice_0\mapsto \xnice\in C([0,\infty);\Ho_j)$ is continuous for $j\in\{0,1\}$. For a solution $\xnice$ with $\xnice_0=\xnice(0)$ we also write e.g. $E(t,\xnice_0)$ instead of $E(\xnice(t))$. Note that $E_1^{hom}(\xnice(t)) = E(\dot{\xnice}(t))$ - this justifies the adjective ``homogeneous'' for the quadratic form $E_1^{hom}$.

Our aim is to find the optimal decay rate of the energy, uniformly with respect to classical initial states. This means that we want to find the smallest possible decreasing function $N:[0,\infty)\rightarrow[0,\infty)$ such that
\begin{align*}
 E(t,\xnice_0) \leq N(t)^2 E_1(\xnice_0)
\end{align*}
for all $\xnice_0\in\Ho_1$. Because of Theorem \ref{thm: Batty-Duyckaerts}, \ref{thm: Borichev-Tomilov} and \ref{thm: Martinez} this is essentially equivalent to estimate the resolvent of the wave equation's generator $\A$ (defined in Section \ref{sec: Semigroup approach} below) along the imaginary axis near infinity and near zero. 

Our two main results are Theorem \ref{thm: Preliminary estimates} and \ref{thm: upper resolvent estimate}. The Sections \ref{sec: Correspondence between resolvents} and \ref{sec: upper resolvent estimate} are devoted to the proofs. We illustrate the application of our main results to energy decay by several examples in Section \ref{sec: examples}. Our first main result (Theorem \ref{thm: Preliminary estimates}) implies in particular that the task of estimating the resolvent of the complicated $3\times 3$-matrix operator $\A$ is equivalent to estimate the resolvent of the corresponding (and much simpler) stationary operator. Our second main result (Theorem \ref{thm: upper resolvent estimate}) thus determines an upper resolvent estimate of $\A$ at infinity. Unfortunately we need \emph{additional assumptions} on the acoustic impedance (see (\ref{eq: additional assumptions})). However in our separate treatment of the $\Omega=(0,1)$-setting in Section \ref{sec: 1D case} we see that in this case actually no additional assumptions are required for the conclusion of Theorem \ref{thm: upper resolvent estimate} to hold. Even more is true: The given upper bound on the resolvent is also optimal in the 1D setting. This and observations from the examples lead us to three questions and corresponding conjectures formulated in Section \ref{sec: Conclusion}.

In Section \ref{sec: Semigroup approach} we recall the semigroup approach from \cite{DFMP2010a}. For convenience of the reader we recall some basic and some not so basic facts from the literature concerning the trace operator, fractional Sobolev spaces and Besov spaces in Appendix \ref{apx: Traces}. In Appendix \ref{apx: semiuniform stability} we recall some Batty-Duyckaerts type Theorems. For the reader to be interested in the physical background of equation (\ref{eq: wave equation}) we recommend \cite{IngardMorse}.

\section{The semigroup approach}\label{sec: Semigroup approach}
We reformulate (\ref{eq: physical wave equation}) as an abstract Cauchy problem in a Hilbert space:
\begin{equation}\label{CP}
 \begin{cases}
  \dot{\xnice}(t) + \A \xnice(t) = 0 , \\
  \xnice(0) = \xnice_0 \in \Ho .
 \end{cases}
\end{equation}
Following the approach of \cite{DFMP2010a} we define the energy/state space $\Ho$ as in (\ref{eq: energy space}) and write $\xnice=(p,v,\psi)$ for its elements (the states). Again let $\Gamma:H^1(\Omega)\rightarrow H^{1/2}(\partial\Omega), u\mapsto u|_{\partial\Omega}$ be the trace operator on $\Omega$. By abuse of notation let $\tau$ denote the multiplication operator on $L^2_{\nu}(0,\infty)$ mapping $\psi(\tau)$ to $\tau\psi(\tau)$. We define the wave operator by
\begin{align*}
 \A = \left(
 \begin{array}{ccc}
  0       & \Div & 0 \\
  \nabla  & 0    & 0 \\
  -\Gamma & 0    & \tau
 \end{array}\right)
 \text{ with }
 D(\A) = \Ho_1 .
\end{align*}
Note that $E_1(\xnice_0) = \norm{\xnice_0}_{D(\A)}^2 = \norm{\xnice_0}_{\Ho}^2 + \norm{\A \xnice_0}_{\Ho_1}^2$ for all $\xnice_0\in D(\A)$. 
\begin{Theorem}[\cite{DFMP2010a}]\label{thm: waves well posed}
 The Cauchy problem (\ref{CP}) is well posed. More precisely $-\A$ is the generator of a $C_0$-semigroup of contractions in $\Ho$.
\end{Theorem}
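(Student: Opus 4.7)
The plan is to invoke the Lumer-Phillips theorem: I need to show that $\A$ is accretive on $\Ho$ (equivalently, $-\A$ is dissipative) and that $\lambda+\A:D(\A)\to\Ho$ is surjective for some $\lambda>0$. For accretivity, I would pick $\xnice=(p,v,\psi)\in D(\A)=\Ho_1$ and expand
\begin{align*}
\dual{\A\xnice}{\xnice}_\Ho=\int_\Omega\Div v\,\overline p+\int_\Omega\nabla p\cdot\overline v+\int_0^\infty\!\!\int_{\partial\Omega}(-\Gamma p+\tau\psi)\overline\psi\,dS\,d\nu(\tau).
\end{align*}
Applying the integration-by-parts identity (\ref{eq: vn}) to the first two terms produces $2\operatorname{Re}\dual{v\cdot n}{\Gamma p}_{H^{-1/2}\times H^{1/2}}$, and using the defining boundary constraint $v\cdot n=\int_0^\infty\psi\,d\nu$ of $\Ho_1$ inside $\int\Gamma p\,\overline\psi\,dS\,d\nu$ produces exactly the negative of this same duality pairing. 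The surviving term is $\int_0^\infty\int_{\partial\Omega}\tau|\psi|^2\,dS\,d\nu\ge 0$, giving accretivity.

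For surjectivity, I would fix $\lambda>0$ and data $(f,g,h)\in\Ho$ and solve
\begin{align*}
\lambda p+\Div v=f,\qquad \lambda v+\nabla p=g,\qquad -\Gamma p+(\lambda+\tau)\psi=h.
\end{align*}
Eliminating $v=\lambda^{-1}(g-\nabla p)$ and $\psi=(h+\Gamma p)/(\lambda+\tau)$, I would substitute into the trace constraint to reduce everything to an elliptic problem for $p$:
\begin{align*}
\lambda^2 p-\Delta p=\lambda f-\Div g\ \text{in }\Omega,\qquad \partial_n p+\lambda K(\lambda)\Gamma p=g\cdot n-\lambda\!\int_0^\infty\!\frac{h(\tau)}{\lambda+\tau}d\nu
\end{align*}
in the appropriate weak sense, where $K(\lambda)=\int_0^\infty(\lambda+\tau)^{-1}d\nu$ is finite by (\ref{eq: k is integrable}). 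Since $K(\lambda)\ge 0$, the associated bilinear form on $H^1(\Omega)$ is coercive, so Lax--Milgram yields a unique weak solution $p\in H^1(\Omega)$, from which $v\in L^2(\Omega;\R^d)$ (automatically a gradient field since $v=\lambda^{-1}(g-\nabla p)$ and $g\in\nabla H^1(\Omega)$) and $\psi$ are recovered by the displayed formulas.

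The main obstacle is the bookkeeping to verify that the reconstructed triple $(p,v,\psi)$ actually lies in $D(\A)=\Ho_1$: one must check that $\Div v\in L^2(\Omega)$ (immediate from the first equation), that the boundary constraint holds in $H^{-1/2}(\partial\Omega)$ (by construction of the elliptic problem), and most delicately that $\psi\in L^2_\nu$ with $\tau\psi-\Gamma p\in L^2_\nu$. The latter requires a careful split of the measure near $\tau=0$ and $\tau=\infty$: one bounds $\int(\lambda+\tau)^{-2}d\nu$ by $\lambda^{-2}\nu([0,1])+\int_1^\infty\tau^{-1}d\nu<\infty$, using that $\nu$ is Radon with $\nu(\{0\})=0$ together with (\ref{eq: k is integrable}), and uses $h\in L^2_\nu$ to control the corresponding term. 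Once these integrabilities are in place, the energy identity $E_1(\xnice)=\|\xnice\|_\Ho^2+\|\A\xnice\|_\Ho^2$ gives the $D(\A)$-norm estimate, and Lumer-Phillips concludes the proof.
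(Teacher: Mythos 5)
The paper does not prove this theorem itself; it cites it directly from \cite{DFMP2010a}, so there is no in-paper argument to compare against. Your Lumer--Phillips reconstruction is the standard route for this kind of first-order dissipative system and is substantially correct: the accretivity computation cancels the divergence/gradient boundary pairing against the $\psi$-component cross term via the constraint in $\Ho_1$, and the surjectivity reduces to a coercive Lax--Milgram problem for $p$ on $H^1(\Omega)$, after which $v$ and $\psi$ are recovered and checked to lie in the right spaces.

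Three small points you should tighten. (i) The first two terms of $\dual{\A\xnice}{\xnice}_\Ho$ give $\Re\dual{v\cdot n}{\Gamma p}$, not $2\Re\dual{v\cdot n}{\Gamma p}$, since $\Re\int_\Omega\nabla p\cdot\overline v=\Re\int_\Omega v\cdot\nabla\overline p$; the $\psi$-cross term likewise contributes $-\Re\dual{v\cdot n}{\Gamma p}$ (this uses that $v\cdot n=\int_0^\infty\psi\,d\nu$ in fact lies in $L^2(\partial\Omega)$ because $\psi\in L^1_\nu$, so the $H^{-1/2}\times H^{1/2}$ pairing reduces to the honest surface integral and Fubini applies), so the cancellation still holds and the surviving $\int\int\tau|\psi|^2\geq0$ gives accretivity; your factor of $2$ was just an imprecision, not a substantive error. (ii) The bound $\int_0^\infty(\lambda+\tau)^{-2}\,d\nu<\infty$ needs $\nu((0,1])<\infty$, which you should justify: it follows from $\nu((0,1])\leq\int_{(0,1]}\tau^{-1}\,d\nu<\infty$ via (\ref{eq: k is integrable}); the piece on $[1,\infty)$ is controlled by $\tau^{-2}\leq\tau^{-1}$. (iii) Lumer--Phillips also requires $D(\A)$ to be dense in $\Ho$, which you did not mention. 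This is not a gap, though: on a Hilbert space, accretivity of $\A$ together with surjectivity of $\lambda+\A$ for one $\lambda>0$ already forces $\overline{D(\A)}=\Ho$ (if $y\perp D(\A)$, take $x$ with $\lambda x+\A x=y$ and observe $0=\Re\dual{y}{x}\geq\lambda\norm{x}^2$), so it is worth a sentence but needs no extra work.
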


Taking formal Laplace transform of the wave equation (\ref{eq: wave equation}) yields
\begin{equation}\label{eq: stationary wave equation}
 \begin{cases}
   z^2u(x) - \Delta u(x) = f & (x\in\Omega), \\
   \partial_n u(x) + z\hat{k}(z)u(x) = g &  (x\in\partial\Omega).
 \end{cases}
\end{equation}
Here $z$ is a complex number and formally $u=\hat{U}(z)=\int_0^{\infty} e^{-zt}U(t)dt$, $f=zU(0)+U_t(0)$ and $g=\hat{k}(z)U(0)|_{\partial\Omega}$. A way to give (\ref{eq: stationary wave equation}) a precise meaning is via the method of forms. Thus for $z\in\C\backslash(-\infty,0)$ let us define the bounded sesquilinear form $a_z:H^1\times H^1(\Omega)\rightarrow\C$ by
\begin{equation}\nonumber
 a_z(p, u) = z^2\int_{\Omega}p\overline{u} + \int_{\Omega} \nabla p \cdot\nabla\overline{u} + z\hat{k}(z)\int_{\partial\Omega} \Gamma p\Gamma \overline{u} dS. 
\end{equation}
If we replace the right-hand side $f,g$ by $F\in H^1(\Omega)^*$ (dual space of $H^1$), given by $\dual{F}{\eta}=\int_{\Omega}f\overline{\eta}+\int_{\partial\Omega} g\Gamma\overline{\eta}dS$, then a functional analytic realization of (\ref{eq: stationary wave equation}) is given by
\begin{equation}\label{eq: stationary wave equation FA}
 \forall \eta\in H^1(\Omega):\, a_z(u,\eta) = \dual{F}{\eta}_{(H^1)^*,H^1(\Omega)} .
\end{equation}
For all $z\in\C\backslash(-\infty,0)$ for which (\ref{eq: stationary wave equation FA}) has for all $F\in H^1(\Omega)^*$ a unique solution $u\in H^1(\Omega)$ we define the stationary resolvent operator $R(z):H^1(\Omega)^*\rightarrow H^1(\Omega), F\mapsto u$.
\begin{Theorem}[\cite{DFMP2010a}]\label{thm: spectrum DFMP}
 The spectrum of the wave operator satisfies
 \begin{align*}
  \sigma(-\A)\backslash(-\infty,0] &= \{z\in\C\backslash(-\infty,0]: R(z) \text{ does not exist.}\} \\
  &\subseteq \{z\in\C: \Re z < 0\}.
 \end{align*}
 Furthermore all spectral points in $\C\backslash(-\infty,0]$ are eigenvalues.
\end{Theorem}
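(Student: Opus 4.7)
The plan is to reduce the resolvent problem for $-\A$ directly to the stationary problem (\ref{eq: stationary wave equation FA}) and then apply the Fredholm alternative. Fix $z\in\C\setminus(-\infty,0]$ and $\ynice=(f,w,\phi)\in\Ho$; I look for $\xnice=(p,v,\psi)\in D(\A)$ with $(z+\A)\xnice=\ynice$, i.e.
\begin{align*}
 zp+\Div v=f,\qquad zv+\nabla p=w,\qquad (z+\tau)\psi-\Gamma p=\phi,
\end{align*}
together with the boundary constraint $v\cdot n=\int_0^\infty\psi(\tau)\,d\nu(\tau)$ built into $\Ho_1$. Since $z\notin(-\infty,0]$, the factor $z+\tau$ is nonzero for every $\tau\geq 0$, so the third equation gives $\psi=(\phi+\Gamma p)/(z+\tau)$. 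Substituting into the boundary constraint and recognising $\khat(z)=\int_0^\infty(z+\tau)^{-1}d\nu(\tau)$ yields $v\cdot n=\khat(z)\Gamma p+h$ with $h=\int_0^\infty \phi(\tau)(z+\tau)^{-1}d\nu(\tau)$. Expressing $v=z^{-1}(w-\nabla p)$ from the second equation and testing the first against arbitrary $\eta\in H^1(\Omega)$ produces, after integration by parts, a stationary equation $a_z(p,\eta)=\dual{F_\ynice}{\eta}$ with $F_\ynice\in H^1(\Omega)^*$ depending linearly and continuously on $\ynice$.

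The same formulas run in reverse: given $u$ solving $a_z(u,\eta)=\dual{F_\ynice}{\eta}$, set $p=u$, $v=z^{-1}(w-\nabla u)$, $\psi=(\phi+\Gamma u)/(z+\tau)$. One checks that $\xnice\in D(\A)$ (the summability of $\psi$ follows from (\ref{eq: k is integrable}) and $z\notin(-\infty,0]$) and $(z+\A)\xnice=\ynice$. Moreover $\ynice\mapsto F_\ynice$ hits all of $H^1(\Omega)^*$, so the two problems are equivalent: $(z+\A)$ is bijective on $\Ho$ iff $R(z)$ exists, which establishes the set equality in the theorem.

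For the inclusion into $\{\Re z<0\}$, contractivity of the semigroup (Theorem \ref{thm: waves well posed}) already gives $\sigma(-\A)\subseteq\{\Re z\leq 0\}$, and I only need to exclude $z=is$ with $s\neq 0$. By the reduction it suffices to show $a_{is}$ is invertible on $H^1(\Omega)$. Since $s^2\int p\overline u$ and $\int_{\partial\Omega}\Gamma p\,\Gamma\overline u\,dS$ are compact forms on $H^1(\Omega)$ (Rellich and compact trace), $a_{is}$ is a compact perturbation of the coercive form $\int_\Omega(p\overline u+\nabla p\cdot\nabla\overline u)$, hence Fredholm of index zero, so invertibility reduces to injectivity. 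If $a_{is}(u,u)=0$, the imaginary part equals $s\,\Re\khat(is)\,\|\Gamma u\|_{L^2(\partial\Omega)}^2$; using $\Re\khat(is)=\int_0^\infty \tau/(s^2+\tau^2)\,d\nu(\tau)>0$ (by $\nu(\{0\})=0$ and $\nu\neq 0$) forces $\Gamma u=0$, and testing $a_{is}$ against arbitrary $\eta\in H^1(\Omega)$ then also yields $\partial_n u=0$. Unique continuation for $-\Delta u=s^2u$ with vanishing Cauchy data produces $u\equiv 0$.

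Finally, every $z\in\sigma(-\A)\setminus(-\infty,0]$ is an eigenvalue: Fredholmness forces $\ker a_z\neq\{0\}$, and a nonzero $u$ in this kernel reconstructs via $\xnice=(u,-z^{-1}\nabla u,\Gamma u/(z+\tau))$ an eigenvector of $-\A$ with eigenvalue $z$. The main obstacle I expect is the unique continuation step, since for Lipschitz $\partial\Omega$ one cannot invoke a naive Holmgren argument and has to lean on a Calder\'on–Aronszajn-type theorem (or use interior analyticity of Helmholtz solutions combined with a careful approximation at the boundary).
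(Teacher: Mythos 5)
Your proof is essentially correct, and it follows the same underlying mechanism that the paper invokes: note that the paper itself does \emph{not} prove this theorem but cites \cite{DFMP2010a} for it; however, the reduction you set up — passing from $(z+\A)\xnice=\ynice$ to the stationary problem $a_z(p,\eta)=\dual{F_\ynice}{\eta}$ with $v$ and $\psi$ recovered algebraically — is exactly the equivalence between (\ref{A}) and (\ref{R}) that the paper records immediately after the theorem. The Fredholm-of-index-zero reading of $a_z$ (compact perturbation of the $H^1$-inner-product form via Rellich and the compact trace), the positivity $\Re\khat(is)=\int_0^\infty\tau(\tau^2+s^2)^{-1}\,d\nu(\tau)>0$ forcing $\Gamma u=0$, and the reconstruction of an eigenvector from a nonzero element of $\ker a_z$ all go through as you describe, and your Riesz-representation remark shows that $\ynice\mapsto F_\ynice$ really does exhaust $H^1(\Omega)^*$.

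The one place where you flag an obstacle — unique continuation on a merely Lipschitz domain — is in fact not an obstacle here, and you should not reach for a Calder\'on--Aronszajn argument. You have already shown both $\Gamma u=0$ and $\partial_n u=0$ (the latter in the weak $H^{-1/2}$ sense of (\ref{eq: vn})). Since $\Gamma u=0$ on a bounded Lipschitz domain means $u\in H^1_0(\Omega)$, the extension $\tilde u$ of $u$ by zero lies in $H^1(\R^d)$ with $\nabla\tilde u=\widetilde{\nabla u}$; then, using (\ref{eq: vn}) and $\partial_n u=0$, for every $\varphi\in C_c^\infty(\R^d)$
\begin{equation}\nonumber
 \int_{\R^d}\nabla\tilde u\cdot\nabla\overline\varphi
 = \int_\Omega\nabla u\cdot\nabla\overline\varphi
 = -\int_\Omega \Delta u\,\overline\varphi
 = s^2\int_{\R^d}\tilde u\,\overline\varphi ,
\end{equation}
so $-\Delta\tilde u=s^2\tilde u$ on all of $\R^d$. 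Interior elliptic regularity makes $\tilde u$ real-analytic, and $\tilde u\equiv 0$ on the nonempty open set $\R^d\setminus\overline\Omega$, whence $u\equiv 0$. This argument needs only Lipschitz regularity of $\partial\Omega$, so the concluding caveat in your proposal can be dropped.
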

Following the proof of the preceding theorem given by \cite{DFMP2010a} one sees that for $s\in\C\backslash i[0,\infty)$
\begin{equation}\label{A}
 (is+\A) (p, v, \psi) = (q, w, \varphi) \in \Ho
\end{equation}
is equivalent to
\begin{align}
 \label{R}
 \forall u\in H^1(\Omega): a_{is}(p, u) = \dual{F}{u}_{(H^1)^*,H^1(\Omega)} \\ \nonumber
  \text{ and } v = \frac{w + \nabla p}{is}, \, \psi(\tau) = \frac{\Gamma p + \varphi(\tau)}{is + \tau},
\end{align}
where
\begin{align}\nonumber
 \dual{F}{u} &= is\int_{\Omega} q \overline{u} - \int_{\Omega} w\cdot\nabla\overline{u} 
 - is\int_{\partial\Omega} \left[ \int_0^{\infty} \frac{\varphi(\tau)}{is+\tau} d\nu(\tau) \right] \Gamma u\, dS \\ \label{eq: F123}
 &=: \dual{F_1}{u} + \dual{F_2}{u} + \dual{F_3}{u} .
\end{align}
Observe that the adjoint operator of $R(z)$ is given by $R(z)^* = R(\overline{z})$ for all $z\in\C\backslash(-\infty,0)$ for which $R(z)$ is defined. Finally mention:
\begin{Theorem}[\cite{DFMP2010a}]\label{thm: A is injective}
 The wave operator $\A$ is injective.
\end{Theorem}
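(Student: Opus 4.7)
The plan is to use the dissipativity of $-\A$ quantitatively. Suppose $\xnice = (p,v,\psi)\in D(\A) = \Ho_1$ satisfies $\A\xnice = 0$. Componentwise this gives $\Div v = 0$, $\nabla p = 0$ (so $p\in H^1(\Omega)$ is a single constant on the connected domain $\Omega$), and $\tau\psi(\tau) - \Gamma p = 0$ in $L^2_\nu$. Membership in $\Ho_1$ also supplies the boundary constraint $v\cdot n = \int_0^\infty \psi(\tau)\,d\nu(\tau)$, with the right-hand side making sense as an $L^2(\partial\Omega)$-function because $\psi\in L^1_\nu$, as noted in the excerpt.

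The key identity is
\begin{equation*}
 \Re\dual{\A\xnice}{\xnice}_\Ho \;=\; \int_0^\infty \tau\,\norm{\psi(\tau)}_{L^2(\partial\Omega)}^2\,d\nu(\tau).
\end{equation*}
To derive it, unpack the $\Ho$-inner product into three pieces. By the duality formula (\ref{eq: vn}), the first two combine so that $\Re\bigl[\int_\Omega (\Div v)\bar p + \int_\Omega(\nabla p)\cdot\bar v\bigr] = \Re\dual{v\cdot n}{\Gamma p}_{H^{-1/2}\times H^{1/2}(\partial\Omega)}$. Substituting the boundary constraint and applying Fubini (legitimate since $\psi\in L^1_\nu$ and $\Gamma p\in L^2(\partial\Omega)$) rewrites this as $\Re\int_0^\infty\int_{\partial\Omega}\psi\,\overline{\Gamma p}\,dS\,d\nu(\tau)$. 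The third piece contributes $\Re\int_0^\infty\int_{\partial\Omega}(-\Gamma p)\bar\psi\,dS\,d\nu(\tau)$ together with the manifestly nonnegative $\int_0^\infty\tau\norm{\psi(\tau)}^2\,d\nu(\tau)$. The two cross terms cancel because $\Re(\psi\overline{\Gamma p}) = \Re(\Gamma p\,\bar\psi)$, yielding the claimed identity.

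Now $\A\xnice = 0$ forces the dissipation integral to vanish, and since $\nu(\{0\}) = 0$ by (\ref{eq: k is integrable}), we conclude $\psi = 0$ in $L^2_\nu$. Feeding this back into $\tau\psi = \Gamma p$ (and using that $\nu$ is not identically zero, which is a standing assumption for a genuinely damped problem) gives $\Gamma p = 0$. Combined with $\nabla p = 0$ on the connected set $\Omega$, this forces $p = 0$. The boundary constraint then reduces to $v\cdot n = 0$, so writing $v = -\nabla U$ with $U\in H^1(\Omega)$, the function $U$ is harmonic ($\Div v = -\Delta U = 0$) with vanishing Neumann trace, hence constant, giving $v = 0$. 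Therefore $\xnice = 0$.

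The only delicate step is the dissipation identity itself: ensuring that the $H^{-1/2}/H^{1/2}$ duality, the Fubini interchange against $d\nu$, and the interpretation of $\int\psi\,d\nu$ as a genuine $L^2(\partial\Omega)$-function are all justified. Each is underwritten by the regularity built into $D(\A) = \Ho_1$, namely $p\in H^1(\Omega)$, $\Div v\in L^2(\Omega)$, and $\psi\in L^1_\nu\cap L^2_\nu$.
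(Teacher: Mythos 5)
Your proof is correct. The paper cites \cite{DFMP2010a} for this theorem and does not give its own argument, so there is nothing internal to compare against; the dissipativity identity $\Re\dual{\A\xnice}{\xnice}_\Ho = \int_0^\infty \tau\norm{\psi(\tau)}_{L^2(\partial\Omega)}^2\,d\nu(\tau)$ is the standard route and is almost certainly what that reference uses. Your implicit appeals to $\Omega$ being connected (``domain'') and to $\nu\neq 0$ are both necessary and both justified: if $\nu=0$ the third component of $\Ho$ collapses and the constants $(c,0,0)$ lie in $\ker\A$, so the theorem genuinely fails without a nonzero kernel $k$.
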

In the next section we characterize all kernels $k$ for which $\A$ is invertible.

\section{A correspondence between $(is+\A)^{-1}$ and $R(is)$}\label{sec: Correspondence between resolvents}
In this section we prove our first main result.
\begin{Theorem}\label{thm: Preliminary estimates}
The following holds:
 \begin{itemize}
 \item[(i)] Let $M:(0,\infty)\rightarrow[1,\infty)$ be an increasing function. Then
 \begin{align*}
  &\left[\exists s_1>0 \forall \abs{s}\geq s_1: \norm{(is+\A)^{-1}} \leq C M(\abs{s})\right] \\ \Leftrightarrow 
  &\left[\exists s_2>0 \forall \abs{s}\geq s_2: \norm{R(is)}_{L^2\rightarrow L^2} \leq C \abs{s}^{-1}M(\abs{s}) \right].
 \end{align*}
 \item[(ii)] $\exists s_3>0 \forall \abs{s}\leq s_3: \norm{(is+\A)^{-1}} \leq C \abs{s}^{-1}$.
 \item[(iii)] $\A$ is invertible iff $(\tau\mapsto\tau^{-1})\in L^{\infty}_{\nu}$, i.e. $\exists \varepsilon>0: \nu|_{(0,\varepsilon)}=0$.
 \end{itemize}
\end{Theorem}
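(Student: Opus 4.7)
My plan is to exploit the correspondence (\ref{A})--(\ref{R})--(\ref{eq: F123}) between the resolvent of $\A$ and the scalar stationary form $a_{is}$, addressing the three parts in turn.

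For the easy direction $(\Rightarrow)$ of (i), I would specialize (\ref{A})--(\ref{R}) to the input $(q, w, \varphi) = (F/(is), 0, 0)$ with $F \in L^2(\Omega) \hookrightarrow H^1(\Omega)^*$: then $F_2 = F_3 = 0$ and $F_1 = F$, so (\ref{R}) gives $p = R(is) F$, whence $\norm{R(is) F}_{L^2} \leq \norm{(p, v, \psi)}_\Ho \leq C M(|s|)\, \norm{F}_{L^2}/|s|$ as desired.

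The hard direction $(\Leftarrow)$ of (i) is the main obstacle. Given $(q, w, \varphi) \in \Ho$ and $(p, v, \psi) := (is+\A)^{-1}(q, w, \varphi)$, (\ref{R}) shows that $p$ solves $a_{is}(p, u) = F_1(u) + F_2(u) + F_3(u)$. Only $F_1$ is an $L^2$-volume source: $F_2$ is a gradient-type distribution built from $w$, and $F_3$ is a boundary distribution built from $\varphi$. To apply the hypothesis $\norm{R(is)}_{L^2 \to L^2} \leq C|s|^{-1} M(|s|)$, my plan is to reduce $F$ to a pure $L^2$-volume source by two successive changes of unknown. First, writing $w = -\nabla V$ with $V \in H^1(\Omega)$ and $\norm{V}_{H^1} \leq C \norm{w}_{L^2}$ (Poincar\'e, after normalizing $V$), the substitution $\bar p := p - V$ yields
\[
 a_{is}(\bar p, u) = is\int_\Omega q\bar u + s^2\int_\Omega V\bar u - is\int_{\partial\Omega}\left(\theta + \khat(is)\Gamma V\right)\Gamma \bar u\, dS,
\]
where $\theta(x) := \int_0^\infty \varphi(\tau, x)/(is+\tau)\, d\nu(\tau)$: the gradient distribution is absorbed into an $L^2$-source and a new boundary term. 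Second, let $\eta \in H^1(\Omega)$ solve the impedance problem $-\Delta\eta + \eta = 0$ in $\Omega$ with $\partial_n \eta + is\khat(is)\Gamma\eta$ equal to this boundary source (well-posed by Lax--Milgram since $\Re(is\khat(is)) \geq 0$); then $\hat p := \bar p - \eta$ satisfies $a_{is}(\hat p, u) = \int \tilde g \bar u$ with $\tilde g \in L^2(\Omega)$, and the hypothesis gives $\norm{\hat p}_{L^2} \leq C M(|s|)\, \norm{\tilde g}_{L^2}/|s|$. Bookkeeping the $|s|$-factors --- using the Cauchy--Schwarz bound $\norm{\theta}_{L^2(\partial\Omega)}^2 \leq (-\Im \khat(is)/s)\, \norm{\varphi}_{L^2_\nu}^2$ and the uniform bound $|\khat(is)| \leq \khat(0)$ --- and reversing the substitutions then delivers $\norm{p}_{L^2} \leq C M(|s|)\, \norm{(q, w, \varphi)}_\Ho$. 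The bounds on $\norm{\nabla p}_{L^2}$ and $\norm{\Gamma p}_{L^2(\partial\Omega)}$ come next from the energy identity $a_{is}(p, p) = F(p)$ by separating real and imaginary parts (using $\Re(is\khat(is)) \geq 0$ and the trace inequality); finally, the formulas in (\ref{R}) yield $\norm{v}_{L^2}$ and $\norm{\psi}_{L^2_\nu}$. The principal difficulty is the careful bookkeeping of $|s|$-powers so that the overall loss stays at the target rate $M(|s|)$.

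For (ii), I would use the same reduction but exploit that for small $|s|$ the boundary dissipation is nondegenerate: $\Im(is\khat(is))/s = \int_0^\infty \tau\, d\nu(\tau)/(s^2 + \tau^2) \to \khat(0) > 0$ as $s \to 0$ by (\ref{eq: k is integrable}). This nondegeneracy, combined with the fact that the Neumann Laplacian's degeneracy on constants is the sole source of singularity in $a_0$, yields at worst a simple pole for $R(is)$ at $0$, which translates via the reduction into $\norm{(is+\A)^{-1}} \leq C/|s|$ for small $|s|$.

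For (iii), invertibility is equivalent to surjectivity by Theorem \ref{thm: A is injective}. Given $(q, w, \varphi) \in \Ho$, the equations $\A(p, v, \psi) = (q, w, \varphi)$ force $\nabla p = w$ (so $p$ is determined up to a constant on each connected component of $\Omega$, since $w \in \nabla H^1(\Omega)$) and $\psi(\tau) = (\Gamma p + \varphi(\tau))/\tau$. For this $\psi$ to lie in $L^2_\nu$ for every $\varphi \in L^2_\nu$ one needs $\tau^{-1} \in L^\infty_\nu$, i.e.\ $\nu|_{(0,\varepsilon)} = 0$: necessity follows by choosing $\varphi$ supported on atoms $\tau_n \to 0$ with $\varphi(\tau_n)/\tau_n \notin \ell^2(\nu)$. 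Conversely, under this condition the remaining system $\Div v = q$ with boundary constraint $v \cdot n = \int_0^\infty \psi\, d\nu$ is a solvable Neumann problem for the gradient potential of $v$, the additive constants in $p$ being fixed componentwise by the compatibility $\int_{\Omega_j} q = \int_{\partial\Omega_j}\int_0^\infty \psi\, d\nu\, dS$ (solvable since $\khat(0) > 0$).
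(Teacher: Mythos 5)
Your forward direction of (i) and the outline of (iii) are fine (though for the necessity in (iii), choosing $\varphi$ "supported on atoms $\tau_n\to0$" is not available when $\nu$ is non-atomic; the paper instead runs a uniform-boundedness argument with $q=w=0$, first bounding the constant $\alpha$ in $p=\alpha$, then deducing $\norm{\varphi/\tau}_{L^1_\nu}\leq C$, then $\tau^{-1}\in L^2_\nu$, then finally that $\tau^{-1}$ is a bounded $L^2_\nu$-multiplier).

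The backward direction of (i) is where the gap lies. Your plan converts the gradient source $F_2$ into a pure $L^2$ volume source by the substitution $\bar p=p-V$ with $w=-\nabla V$, and then applies the hypothesis $\norm{R(is)}_{L^2\to L^2}\leq C\abs{s}^{-1}M(\abs{s})$. But the new volume source is $isq+s^2V$, so the contribution of the $V$-part is at best
\[
 \norm{R(is)(s^2V)}_{L^2}\ \leq\ C\abs{s}\,M(\abs{s})\,\norm{V}_{L^2}\ \lesssim\ \abs{s}\,M(\abs{s})\,\norm{w}_{L^2},
\]
which is off from the target $M(\abs{s})\,\norm{\ynice}_{\Ho}$ by a full factor of $\abs{s}$. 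Your second substitution makes matters worse: if $\eta$ solves $-\Delta\eta+\eta=0$ with the impedance boundary condition equal to the boundary source, then $a_{is}(\hat p,u)=a_{is}(\bar p,u)-a_{is}(\eta,u)$ acquires an additional volume source $(s^2+1)\eta$, and since $\norm{\eta}_{L^2}$ is only controlled by $\abs{s}^{1/2}\norm{\varphi}_{L^2_\nu}+\abs{s}\norm{w}_{L^2}$ via Lax--Milgram, this contributes yet more powers of $\abs{s}$. You flag the "careful bookkeeping of $\abs{s}$-powers" as the principal difficulty, but with the substitutions as written the bookkeeping does not close. The paper avoids this entirely by \emph{not} substituting: Proposition \ref{thm: interpolation at infinity} bootstraps $\norm{R(is)}_{L^2\to L^2}=O(\abs{s}^{-1}M)$ via the coercivity $\Re a_{is}(p,p)\geq c\norm{p}_{H^1}^2-Cs^2\norm{p}_{L^2}^2$ (to get $L^2\to H^1$), duality (to get $X^{-1}\to L^2$), and real interpolation, so that $\norm{R(is)}_{X^{-1}\to X^0}=O(M)$ \emph{directly} --- a gain of $\abs{s}$ over what one gets by feeding an $L^2$ representative into $R(is)$. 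The Besov/interpolation machinery, and in particular the borderline trace $\Gamma:X^{1/2}\to L^2(\partial\Omega)$ (Proposition \ref{thm: Trace}), is essential for placing $F_2$ and $F_3$ at the optimal regularity, and I do not see how a substitution argument can reproduce it.

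Part (ii) is sketched too thinly to assess: "at worst a simple pole" is the correct intuition, but the paper's proof requires the Maz'ya-type inequality (Lemma \ref{thm: Maz'ya}) for coercivity near $s=0$, a decomposition of $p$ along an approximate mean functional $G_s$ to handle the constant mode, and an $s$-weighted version of the interpolation argument (Proposition \ref{thm: interpolation at 0}), none of which are present in the proposal.
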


If $\A$ is not invertible we deduce from Theorem \ref{thm: A is injective} that $\A$ can not be surjective in this case. In Section \ref{sec: range of A} we characterize the range of $\A$.

\subsection{Singularity at $\infty$}
In this subsection we prove Theorem \ref{thm: Preliminary estimates} (i). Therefore let us first define the auxiliary spaces $X^{\theta}$ by the real interpolation method:
\begin{equation}\nonumber
 X^{\theta} =
 \begin{cases}
  L^2(\Omega) \text{ resp. } H^1(\Omega) & \text{if } \theta = 0 \text{ resp. } 1, \\
  (L^2(\Omega), H^1(\Omega))_{\theta, 1} & \text{if } \theta\in(0,1), \\
  (X^{\theta})^* & \text{if } \theta\in[-1, 0).
 \end{cases}
\end{equation}
For $\theta\in(0,1)$ the space $X^{\theta}$ coincides with the Besov space $B^{\theta, 2}_1(\Omega)$. 

Let us explain why we use the Besov spaces $X^{\theta}$ instead of the Bessel potential spaces $H^{\theta}(\Omega)$. The reason is that while the trace operator $\Gamma: H^{\theta}(\Omega)\rightarrow H^{\theta-1/2}(\partial\Omega)$ is continuous for $\theta\in(1/2,1]$ this is no longer true for $\theta=1/2$ (with the convention $H^0=L^2$). On the other hand $\Gamma: X^{1/2}\rightarrow L^2(\partial\Omega)$ is indeed continuous (see Proposition \ref{thm: Trace} in the appendix). A corollary of this fact is that for some $C>0$
\begin{equation}\label{eq: Trace inequality}
 \forall u\in H^1(\Omega): \norm{\Gamma u}_{L^2(\partial\Omega)}^2 \leq C \norm{u}_{L^2(\Omega)}\norm{u}_{H^1(\Omega)}.
\end{equation}
Actually, by Lemma \ref{thm: interpolation lemma}, the preceding trace inequality is equivalent to the continuity of the trace operator $\Gamma:X^{1/2}\rightarrow L^2(\partial\Omega)$.

Let us prove the following extrapolation result.
\begin{Proposition}\label{thm: interpolation at infinity}
 Let $M:(1,\infty)\rightarrow[1,\infty)$ be an increasing function. If 
 \begin{equation}\label{eq: interpolation at infinity}
  \norm{R(is)}_{X^{-a}\rightarrow X^{b}} = O(\abs{s}^{a+b-1}M(\abs{s})) \text{ as } \abs{s}\rightarrow \infty
 \end{equation}
 is true for $a=b=0$, then it is also true for all $a,b\in[0,1]$.
\end{Proposition}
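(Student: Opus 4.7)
The plan is to establish the four corner estimates of the parameter square $\{(a,b):a,b\in[0,1]\}$ and then fill in the interior by real interpolation. The corner $(0,0)$ is the hypothesis; the three remaining corners $(0,1)$, $(1,0)$, $(1,1)$ are to be deduced from a single a priori bound on solutions of (\ref{eq: stationary wave equation FA}).

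That bound is
\begin{align*}
\norm{u}_{H^1}\lesssim \norm{F}_{(H^1)^*}+\abs{s}\norm{u}_{L^2}
\end{align*}
for any $F\in H^1(\Omega)^*$ and $u=R(is)F$. To derive it, test $a_{is}(u,u)=\dual{F}{u}_{(H^1)^*\times H^1}$ with $u$ itself. The boundary contribution $is\khat(is)\norm{\Gamma u}^2$ has non-negative real part because $\Re(is\khat(is))=\int_0^\infty \frac{s^2}{s^2+\tau^2}\,d\nu(\tau)\geq 0$, so it can be dropped when bounding from below; this leaves $\norm{\nabla u}_{L^2}^2\leq \abs{\dual{F}{u}}+s^2\norm{u}_{L^2}^2$, and absorbing $\norm{u}_{H^1}$ on the left via Young's inequality produces the claim.

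Substituting the hypothesis $\norm{u}_{L^2}\lesssim \abs{s}^{-1}M(\abs{s})\norm{f}_{L^2}$ for $F=f\in L^2$ into the a priori estimate gives corner $(0,1)$: $\norm{R(is)}_{L^2\to H^1}\lesssim M(\abs{s})$. Using $R(is)^*=R(-is)$ together with $M(\abs{s})=M(\abs{-s})$ transposes this to corner $(1,0)$: $\norm{R(is)}_{(H^1)^*\to L^2}\lesssim M(\abs{s})$. Re-inserting this last bound into the a priori estimate gives corner $(1,1)$: $\norm{u}_{H^1}\lesssim (1+\abs{s}M(\abs{s}))\norm{F}_{(H^1)^*}\lesssim \abs{s}M(\abs{s})\norm{F}_{(H^1)^*}$.

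For the remaining values of $(a,b)$, real interpolation takes over. The space $X^b=(L^2,H^1)_{b,1}$ is a real interpolation space by definition, and by the duality theorem for real interpolation (applicable because $H^1$ is dense in $L^2$) one has $X^{-a}=(X^a)^*=(L^2,(H^1)^*)_{a,\infty}$. Applying the interpolation property of the real functor to the four corner bounds — first in the target with source $L^2$ respectively $(H^1)^*$ fixed, then in the source with target $X^b$ fixed — combines them geometrically:
\begin{align*}
\norm{R(is)}_{X^{-a}\to X^b}\lesssim \bigl(\abs{s}^{b-1}M\bigr)^{1-a}\bigl(\abs{s}^{b}M\bigr)^{a}=\abs{s}^{a+b-1}M(\abs{s}).
\end{align*}
I expect the interpolation step to be the main technical obstacle, specifically the correct identification of $X^{-a}$ with the appropriate dual real interpolation space; the rest is a direct consequence of the form structure of (\ref{eq: stationary wave equation FA}).
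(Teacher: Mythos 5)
Your proof is correct and follows essentially the same four-corner-plus-interpolation strategy as the paper: establish $(0,0)$, $(0,1)$, $(1,0)$, $(1,1)$ via an a priori $H^1$ bound combined with the duality $R(is)^*=R(-is)$, then fill in by real interpolation using $X^{-a}=(L^2,(H^1)^*)_{a,\infty}$. The only cosmetic difference is that you obtain the coercivity by noting $\Re\bigl(is\khat(is)\bigr)=\int_0^\infty \frac{s^2}{s^2+\tau^2}\,d\nu(\tau)\geq 0$ (exploiting complete monotonicity) whereas the paper absorbs the boundary term using the trace inequality (\ref{eq: Trace inequality}) and boundedness of $\khat(is)$ — both give the same a priori estimate.
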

\begin{proof}
 Throughout the proof we may assume $\abs{s}$ to be sufficiently large. Assume that (\ref{eq: interpolation at infinity}) is true for $a = b = 0$. Let $f\in L^2(\Omega)$ and $p=R(is)f$, i.e. 
 \begin{equation}\nonumber 
 \forall u\in H^1(\Omega):\, a_{is}(p,u) = \int_{\Omega}f\overline{u} .
 \end{equation}
 Because of (\ref{eq: Trace inequality}) and the uniform boundedness of $\hat{k}(is)$ there are constants $c,C>0$ such that $\Re a_{is}(p,p)\geq c\norm{p}_{H^1}^2-Cs^2\norm{p}_{L^2}^2$. This helps us to estimate
 \begin{align*}
  c\norm{p}_{H^1}^2 &\leq \Re a_{is}(p,p) + Cs^2\norm{p}_{L^2}^2 \\
  &\leq \norm{f}_{L^2}\norm{p}_{L^2} + Cs^2\norm{p}_{L^2}^2 \\
  &\leq s^{-2}\norm{f}_{L^2}^2 + Cs^2\norm{p}_{L^2}^2 \\
  &\leq C M(\abs{s})^2 \norm{f}_{L^2}^2.
 \end{align*}
 In other words, (\ref{eq: interpolation at infinity}) is true for $a=0, b=1$. By duality (recall $R(z)^* = R(\overline{z})$) it is also true for $a=-1, b=0$. Almost the same calculation as above but now with the help of (\ref{eq: interpolation at infinity}) for the now known case $a=-1, b=0$ shows that (\ref{eq: interpolation at infinity}) is also true for $a=-1, b=1$. 
 
 It remains to interpolate. First interpolate between the parameters $(a=0,b=1)$ and $(a=1,b=1)$ to get (\ref{eq: interpolation at infinity}) for $a\in[0,1], b=1$. Then interpolate between the parameters $(a=0,b=0)$ and $(a=1,b=0)$ to get (\ref{eq: interpolation at infinity}) for $a\in[0,1], b=0$. One last interpolation gives us the desired result.
\end{proof}
Let us proceed with the proof of Theorem \ref{thm: Preliminary estimates} part (i). The implication ``$\Rightarrow$'' follows immediately from the equivalence of (\ref{A}) and (\ref{R}) with $w, \varphi = 0$. Therefore we have to show $\norm{\xnice}_{\Ho}\leq CM(\abs{s}) \norm{\ynice}_{\Ho}$, for all large $\abs{s}$ and for all $\xnice=(p,v,\psi)\in D(\A), \ynice=(q,w,\varphi)\in\Ho$ satisfying (\ref{A}) where $C$ does not depend on $s$ and $\ynice$.

Let $F_j$ for $j\in\{1,2,3\}$ be defined by (\ref{eq: F123}) and let $p_j$ satisfy
\begin{equation}\nonumber
 \forall u\in H^1(\Omega): a_{is}(p_j, u) = \dual{F_j}{u}_{(H^1)^*,H^1(\Omega)}.
\end{equation}
\emph{Case $j=1$.} It is clear that $\norm{F_1}_{L^2}=\abs{s}\norm{q}_{L^2}$. By Proposition \ref{thm: interpolation at infinity} we have $\norm{p_1}_{X^b}=O(\abs{s}^bM(\abs{s}))\norm{q}_{L^2}$ for all $b\in[0,1]$.
\emph{Case $j=2$.} It is clear that $\norm{F_2}_{X^{-1}}\leq\norm{w}_{L^2}$. By Proposition \ref{thm: interpolation at infinity} we have $\norm{p_2}_{X^b}=O(\abs{s}^bM(\abs{s}))\norm{w}_{L^2}$ for all $b\in[0,1]$.
\emph{Case $j=3$.} By the continuity of the trace $\Gamma:X^{1/2}\rightarrow L^2(\partial\Omega)$, H\"older's inequality and (\ref{eq: k is integrable}) we have
\begin{align*}
 \norm{F_3}_{X^{-\frac{1}{2}}} &\leq C \abs{s} \norm{\int_0^{\infty} \frac{\varphi(\tau)}{is+\tau} d\nu(\tau)}_{L^2(\partial\Omega)} \\
 &\leq C \abs{s}^{\frac{1}{2}} \norm{\varphi}_{L_{\nu}^2}.
\end{align*}
Again by Proposition \ref{thm: interpolation at infinity} this yields $\norm{p_3}_{X^b}=O(\abs{s}^bM(\abs{s}))\norm{\varphi}_{L_{\nu}^2}$ for all $b\in[0,1]$. Overall we derived the estimate $\norm{p}_{X^b}=O(\abs{s}^b M(\abs{s})) \norm{\ynice}_{\Ho}$ for all $b\in[0,1]$. Finally, this together with (\ref{R}) implies
\begin{align*}
 \norm{v}_{L^2} &\leq C\abs{s}^{-1}(\norm{w}_{L^2} + \norm{p}_{H^1}) \\
 &\leq CM(\abs{s})\norm{\ynice}_{\Ho}
\end{align*}
and
\begin{align*}
 \norm{\psi}_{L_{\nu}^2} &\leq \abs{s}^{-1}\norm{\varphi}_{L_{\nu}^2} 
   + \norm{\Gamma p}_{L^2}\left(\int_0^{\infty}\frac{1}{\abs{is+\tau}^2} d\nu(\tau)\right)^{\frac{1}{2}} \\
 &\leq \abs{s}^{-1}\norm{\varphi}_{L_{\nu}^2} + C\abs{s}^{-\frac{1}{2}}\norm{p}_{X^{\frac{1}{2}}} \\
 &\leq CM(\abs{s})\norm{\ynice}_{\Ho}.
\end{align*}
This concludes the proof of Theorem \ref{thm: Preliminary estimates} part (i).

\subsection{Singularity at $0$}
Now we prove Theorem \ref{thm: Preliminary estimates} (ii). For $s\neq 0$ we equip the Sobolev space $H^1(\Omega)$ with the equivalent norm $\norm{u}_{H_s^1}^2:=\norm{u}_{L^2}^2+\norm{s^{-1}\nabla u}_{L^2}^2$. In what follows we are interested in the asymptotics $s\rightarrow0$ while $s\neq0$. As in the preceding subsection we introduce some auxiliary spaces by the real interpolation method
\begin{equation}\nonumber
 X^{\theta}_s =
 \begin{cases}
  L^2(\Omega) \text{ resp. } H^1_s(\Omega) & \text{if } \theta = 0 \text{ resp. } 1, \\
  (L^2(\Omega), H^1_s(\Omega))_{\theta, 1} & \text{if } \theta\in(0,1), \\
  (X^{\theta}_s)^* & \text{if } \theta\in[-1, 0).
 \end{cases}
\end{equation}
We prove an analog of Proposition \ref{thm: interpolation at infinity} - but without the unknown function $M$.
\begin{Proposition}\label{thm: interpolation at 0}
 Let $a, b\in [0,1]$ and $\theta_+=\max\{a+b-1, 0\}$, then
 \begin{equation}\label{eq: interpolation at 0}
  \norm{R(is)}_{X^{-a}_s\rightarrow X^{b}_s} = O(\abs{s}^{-1-\theta_+}) \text{ as } s\rightarrow 0 .
 \end{equation}
\end{Proposition}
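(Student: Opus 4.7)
The plan is to follow the strategy of Proposition~\ref{thm: interpolation at infinity}, establishing the four corner estimates $(a,b)\in\{0,1\}^2$ and then interpolating. The extra difficulty compared to the $s\to\infty$ case is that the form $a_{is}$ degenerates at $s=0$: its principal part has the constants in its kernel, while the boundary term $is\hat k(is)\int_{\partial\Omega}\Gamma p\,\Gamma\bar u\,dS$ vanishes in the limit. The key input is the expansion $is\hat k(is)=is\hat k(0)+o(|s|)$ for small real $s$, coming from $\hat k(z)=\int(z+\tau)^{-1}d\nu(\tau)$ together with $\hat k(0)=\int\tau^{-1}d\nu<\infty$; in particular $|is\hat k(is)|\gtrsim|s|$, and this is what produces the $|s|^{-1}$ loss.

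For the base case $(a,b)=(0,0)$ I would decompose $p=R(is)f=p_c+\tilde p$, where $p_c$ is the mean of $p$ over $\Omega$. Testing $a_{is}(p,\eta)=\int f\bar\eta$ against $\eta=1$ yields the scalar identity
\[
\bigl(is\hat k(is)|\partial\Omega|-s^2|\Omega|\bigr)p_c+is\hat k(is)\int_{\partial\Omega}\Gamma\tilde p\,dS=\int_\Omega f,
\]
whose prefactor has modulus $\gtrsim|s|$; this forces $|p_c|\leq C|s|^{-1}\|f\|_{L^2}+C\|\Gamma\tilde p\|_{L^2(\partial\Omega)}$. Testing next against $\eta=\tilde p$, taking the real part, and discarding the nonnegative $\Re(is\hat k(is))\|\Gamma\tilde p\|_{L^2(\partial\Omega)}^2$ term, the only nontrivial cross term has modulus $\leq Cs\,|p_c|\,\|\Gamma\tilde p\|_{L^2(\partial\Omega)}\leq C\|f\|_{L^2}\|\Gamma\tilde p\|_{L^2(\partial\Omega)}+Cs\|\Gamma\tilde p\|_{L^2(\partial\Omega)}^2$. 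Using Poincar\'e on mean-zero functions together with the trace inequality~(\ref{eq: Trace inequality}), all such terms absorb into $\tfrac12\|\nabla\tilde p\|_{L^2}^2$ for $|s|$ small, producing $\|\nabla\tilde p\|_{L^2}\leq C\|f\|_{L^2}$ and hence $\|p\|_{L^2}\leq C|s|^{-1}\|f\|_{L^2}$. The corner $(0,1)$ then follows at once since $\|\nabla p\|_{L^2}=\|\nabla\tilde p\|_{L^2}\leq C\|f\|_{L^2}$. The corner $(1,0)$ follows by duality using $R(is)^*=R(-is)$ and the symmetry of the above argument under $s\mapsto-s$. The corner $(1,1)$ is obtained by testing $a_{is}(p,p)=\langle F,p\rangle$, dropping the nonnegative boundary contribution, and absorbing via the $(1,0)$ bound, giving $\|p\|_{X_s^1}\leq C|s|^{-2}\|F\|_{X_s^{-1}}$.

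For the interpolation step, real interpolation between the corners $(0,1)$ and $(1,0)$ covers the segment $\{a+b=1\}$ with bound $O(|s|^{-1})$. Points in the lower triangle $a+b<1$ are then reached by interpolating $(0,0)$ with $(a/(a+b),b/(a+b))$ at parameter $a+b$, preserving the bound $O(|s|^{-1})=O(|s|^{-1-\theta_+})$ since $\theta_+=0$ there. Points in the upper triangle $a+b>1$ are reached by interpolating an appropriate point of $\{a+b=1\}$ with the corner $(1,1)$ at parameter $t=a+b-1$, producing $O(|s|^{-1-t})=O(|s|^{-1-(a+b-1)})=O(|s|^{-1-\theta_+})$. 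The main obstacle throughout is the base case: one must track carefully how the $|s|$-smallness of $is\hat k(is)$ balances the $|s|^{-1}$-blow-up of $p_c$ in the cross term so that all absorbing constants remain uniform as $s\to 0$.
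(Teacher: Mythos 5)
Your proof is correct, but it takes a genuinely different route than the paper's. The paper establishes the base case $(a,b)=(0,0)$ by combining the trace inequality~(\ref{eq: Trace inequality}) with Maz'ya's isoperimetric inequality (Lemma~\ref{thm: Maz'ya}), which gives $\|\nabla u\|^2_{L^2}+\|\Gamma u\|^2_{L^2(\partial\Omega)}\gtrsim\|u\|^2_{L^2}$ and hence a lower bound $|a_{is}(p,p)|\gtrsim |s|\|p\|^2_{L^2}$; it then handles $(0,1)$ via a separate decomposition with respect to the functional $G_s(u)=-s\int_\Omega\bar u+i\hat k(is)\int_{\partial\Omega}\bar u\,dS$, observing that $\|\nabla u\|_{L^2}+|G_s(u)|$ is a uniformly equivalent $H^1$-norm for small $|s|$. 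You instead decompose $p=p_c+\tilde p$ with $p_c$ the $\Omega$-mean, test against $\eta=1$ to isolate $p_c$ with a prefactor $is\hat k(is)|\partial\Omega|-s^2|\Omega|$ of modulus $\gtrsim|s|$, and test against $\eta=\tilde p$ to close a coercivity estimate using only the standard Poincar\'e inequality on mean-zero functions plus the trace inequality. This bypasses Maz'ya's lemma entirely and yields $(0,0)$ and $(0,1)$ from the same computation, which is a genuine simplification. The corners $(1,0)$ (via $R(is)^*=R(-is)$) and $(1,1)$ (testing against $p$, dropping the accretive boundary term $\Re(is\hat k(is))\|\Gamma p\|^2\geq 0$, absorbing with the $(1,0)$ bound) and the two-stage interpolation (first the diagonal $a+b=1$, then the lower and upper triangles) match the paper's structure in substance, though your interpolation bookkeeping is stated more explicitly. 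One minor clean-up: for $(1,0)$ you do not need any ``symmetry of the argument under $s\mapsto -s$'' beyond noting that the $(0,1)$ estimate was established for all small nonzero real $s$, so $R(-is)$ inherits it directly.
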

Before we can prove this proposition we show
\begin{Lemma}\label{thm: Maz'ya}
 There is a constant $C(\Omega)$ solely depending on the dimension and volume of $\Omega$ such that for all $u\in H^1(\Omega)$
 \begin{equation}\nonumber
  \int_{\Omega} \abs{\nabla u}^2 + \int_{\partial\Omega} \abs{u}^2 dS \geq C(\Omega) \int_{\Omega} \abs{u}^2 .
 \end{equation}
\end{Lemma}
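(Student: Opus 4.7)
The plan is to reduce the lemma to the classical BV/isoperimetric Sobolev inequality on $\R^d$, which is the natural source of a constant depending only on the dimension and the Lebesgue measure of $\Omega$. Concretely, given $u\in H^1(\Omega)$ (assume first $u\in C^\infty(\bar\Omega)$ and pass to the limit at the end using density and continuity of the trace), extend $w=u^2$ by zero to all of $\R^d$. Then $w\in BV(\R^d)$ with total variation
\begin{equation*}
 \abs{Dw}(\R^d) = 2\int_{\Omega}\abs{u}\abs{\nabla u}\,dx + \int_{\partial\Omega} \abs{u}^2\,dS,
\end{equation*}
since the interior part is $|\nabla(u^2)|$ and the jump across $\partial\Omega$ (from $u^2$ to $0$) contributes the boundary integral.

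The key ingredient is the Sobolev inequality for $BV(\R^d)$ (equivalent to the isoperimetric inequality):
\begin{equation*}
 \norm{w}_{L^{d/(d-1)}(\R^d)} \leq c_d\, \abs{Dw}(\R^d),
\end{equation*}
with $c_d$ depending only on $d$. Since $w$ is supported in $\Omega$, H\"older's inequality yields $\int_{\Omega} u^2 = \norm{w}_{L^1(\Omega)} \leq \abs{\Omega}^{1/d}\norm{w}_{L^{d/(d-1)}(\R^d)}$. Combining,
\begin{equation*}
 \int_{\Omega} u^2 \leq c_d\abs{\Omega}^{1/d}\Bigl(2\int_\Omega \abs{u}\abs{\nabla u}\,dx + \int_{\partial\Omega}\abs{u}^2\,dS\Bigr).
\end{equation*}

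To finish, estimate $2\abs{u}\abs{\nabla u}\leq \delta u^2 + \delta^{-1}\abs{\nabla u}^2$ by Young's inequality, then pick $\delta$ so that $c_d\abs{\Omega}^{1/d}\delta\leq \tfrac12$, and absorb the resulting $u^2$ term into the left-hand side. What remains is exactly an inequality of the form $\int_\Omega u^2 \leq C(d,\abs{\Omega})\bigl(\int_\Omega\abs{\nabla u}^2 + \int_{\partial\Omega}\abs{u}^2\,dS\bigr)$, which is the claim.

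The only technical step worth worrying about is the BV chain-rule computation for $u^2$ when $u$ is merely in $H^1(\Omega)$; this is the main obstacle. I would bypass it by first proving the inequality for $u\in C^\infty(\bar\Omega)$ (where the computation of $\abs{Dw}(\R^d)$ is immediate) and then invoking density of $C^\infty(\bar\Omega)$ in $H^1(\Omega)$ together with the continuity of the trace $H^1(\Omega)\to L^2(\partial\Omega)$ to extend the inequality to all of $H^1(\Omega)$. A softer, non-constructive alternative (compactness: Rellich plus compactness of the trace for Lipschitz domains) would give existence of some constant but would not identify the dependence only on $d$ and $\abs{\Omega}$, so the BV route is the correct one here.
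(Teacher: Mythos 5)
Your argument is correct and is essentially the paper's proof, just spelled out more carefully: the paper cites Maz'ya's isoperimetric inequality on $\Omega$ and plugs in $v=u^2$, which is exactly the $BV(\R^d)$-Sobolev inequality applied to the extension of $u^2$ by zero, and the Young/absorption step you make explicit is left implicit in the paper's "The conclusion now follows". A minor difference is that your route handles $d=1$ uniformly (with $L^{d/(d-1)}=L^\infty$), whereas the paper treats $d=1$ separately as an exercise before invoking Maz'ya for $d\ge 2$.
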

\begin{proof}
 For the dimension $d=1$ this is an easy exercise for the reader. For $d\geq2$ we recall the isoperimetric inequality of Maz'ya \cite[Chapter 5.6]{Maz'ya} which is valid for all functions $v\in W^{1,1}(\Omega)$:
 \begin{equation}\nonumber
    \int_{\Omega} \abs{\nabla v} + \int_{\partial\Omega} \abs{v} dS 
    \geq \frac{d\sqrt{\pi}}{\Gamma(1+\frac{d}{2})^{\frac{1}{d}}} \left(\int_{\Omega} \abs{v}^{\frac{d}{d-1}}\right)^{\frac{d-1}{d}} .
 \end{equation}
 The right-hand side can easily be estimated from below by a constant times the $L^1(\Omega)$-norm of $v$ since $\Omega$ is bounded. The conclusion now follows by plugging in $v=u^2$.
\end{proof}
\begin{proof}[Proof of Proposition \ref{thm: interpolation at 0}.]
 Because of (\ref{eq: Trace inequality}) and the continuity of $\R\owns s\mapsto \hat{k}(is)$ at zero we have for all $u\in H^1(\Omega)$
 \begin{equation}\nonumber
  a_{is}(u,u) = \int_{\Omega} \abs{\nabla u}^2 + is\hat{k}(0)\int_{\partial\Omega} \abs{u}^2 dS + o(1)\norm{\nabla u}_{L^2}^2 + O(s^2)\norm{u}^2_{L^2} .
 \end{equation}
 Thus for sufficiently small $\abs{s}$ we deduce from Lemma \ref{thm: Maz'ya} and the fact $\hat{k}(0)>0$ that for all solutions $p\in H^1(\Omega)$ of the stationary wave equation (\ref{R}) with $F=f\in L^2(\Omega)$ the following estimate holds:
 \begin{align*}
  \abs{s}\norm{p}_{L^2}^2 &\leq C \abs{a_{is}(p,p)} = C \abs{\dual{f}{p}} \\
  &\leq C\abs{s}^{-1}\norm{f}^2_{L^2} + \frac{\abs{s}}{2}\norm{p}_{L^2}^2 .
 \end{align*}
 This shows (\ref{eq: interpolation at 0}) in the case $a=b=0$.
 
 Let us define the semi-linear functional
 \begin{equation}\nonumber
  G_s(u) = -s\int_{\Omega} \overline{u} + i\hat{k}(is)\int_{\partial\Omega} \overline{u} dS 
 \end{equation}
 for $u\in H^1(\Omega)$. Observe that $G_s(1)\rightarrow i\khat(0)\abs{\partial\Omega}\neq0$ as $s$ tends to $0$. It is easy to see from Poincar\'e's inequality (recall that $\Omega$ has Lipschitz boundary) that the expression $\norm{\nabla u}_{L^2} + \abs{G_s(u)}$ defines a norm on $H^1(\Omega)$ which is equivalent to the usual one - uniformly for small $\abs{s}$. In particular $p\mapsto\norm{\nabla p}_{L^2}$ is an equivalent norm on the kernel of $G_s$. 
 
 Remember that $p$ is the solution of (\ref{R}) for $F=f\in L^2(\Omega)$. We decompose $p=p_0+p_G$ with $p_G=G_s(p)=\text{const.}\in L^2(\Omega)$ and $G_s(p_0)=0$. Then
 \begin{equation}\nonumber
  a_{is}(p,p_0) = a_{is}(p_0,p_0) = (1+O(\abs{s}))\int_{\Omega} \abs{\nabla p_0}^2 .
 \end{equation}
 This implies 
 \begin{equation}\nonumber
  \norm{\nabla p_0}_{L^2}^2 \leq C\abs{a_{is}(p,p_0)} \leq C\abs{\dual{f}{p_0}} \leq C\norm{f}_{L^2}\norm{\nabla p_0} .
 \end{equation}
 This in combination with (\ref{eq: interpolation at 0}) for $a=b=0$ implies $\norm{p}_{H^1_s}\leq C\abs{s}^{-1}\norm{f}_{L^2}$ which is (\ref{eq: interpolation at 0}) for the parameters $a=0, b=1$. By duality (recall $R(z)^* = R(\overline{z})$) equation (\ref{eq: interpolation at 0}) is also true for $a=1,b=0$. A similar calculation as above with $f$ replaced by $F\in H^1(\Omega)^*$ and (\ref{eq: interpolation at 0}) for $a=1,b=0$ shows (\ref{eq: interpolation at 0}) for $a=1, b=1$. 
 
 What remains to do is some interpolation. It is important to interpolate in the right order. First, one has to show 
 \begin{equation}\nonumber
  \norm{R(is)}_{X^0_s\rightarrow X^{b_1}_s}, \norm{R(is)}_{X^{a_1}_s\rightarrow X^0_s}=O(\abs{s}^{-1}) 
 \end{equation}
 for $a_1,b_1\in [0,1]$. This can be done via interpolation between $(a=0,b=0)$ and $(a=0, b=1)$ for the first estimate and between $(a=0,b=0)$ and $(a=1, b=0)$ for the second estimate. Choosing $a_1$ and $b_1$ appropriately, the preceding estimates imply (\ref{eq: interpolation at 0}) in the case $a+b\leq 1$. Interpolation between the preceding case and $a=1,b=1$ yields the remaining part of the proposition.  
\end{proof}
Let us proceed with the proof of Theorem \ref{thm: Preliminary estimates} part (ii) in a similar fashion as for part (i). We have to show $\norm{x}_{\Ho}\leq C\abs{s}^{-1} \norm{y}_{\Ho}$ for all small $\abs{s}$ and for all $x=(p,v,\psi)\in D(A), y=(q,w,\varphi)\in\Ho$ satisfying (\ref{A}) where $C$ does not depend on $s$ and $y$. Let $F_j$ for $j\in\{1,2,3\}$ be defined by (\ref{eq: F123}) and let $p_j$ satisfy
\begin{equation}\nonumber
 \forall u\in H^1(\Omega): a_{is}(p_j, u) = \dual{F_j}{u}_{(H^1)^*,H^1(\Omega)}
\end{equation}
\emph{Case $j=1$.} It is clear that $\norm{F_1}_{L^2}=\abs{s}\norm{q}_{L^2}$. By Proposition \ref{thm: interpolation at 0} we have $\norm{p_1}_{X^b_s}=O(1)\norm{q}_{L^2}$ for all $b\in[0,1]$.
\emph{Case $j=2$.} It is clear that $\norm{F_2}_{X^{-1}_s}\leq \abs{s}\norm{w}_{L^2}$. By Proposition \ref{thm: interpolation at 0} we have $\norm{p_2}_{X^b_s}=O(\abs{s}^{-b})\norm{w}_{L^2}$ for all $b\in[0,1]$.
\emph{Case $j=3$.} By the continuity of the trace $\Gamma:X^{1/2}\rightarrow L^2(\partial\Omega)$ and by H\"older's inequality we have for all $\abs{s}\leq 1$
\begin{align*}
 \norm{F_3}_{X^{-\frac{1}{2}}_s} &\leq \norm{F_3}_{X^{-\frac{1}{2}}} \leq  
   C \abs{s} \norm{\int_0^{\infty} \frac{\varphi(\tau)}{is+\tau} d\nu(\tau)}_{L^2(\partial\Omega)} \\
 &\leq C \abs{s}^{\frac{1}{2}} \norm{\varphi}_{L_{\nu}^2}.
\end{align*}
By Proposition \ref{thm: interpolation at 0} this yields $\norm{p_3}_{X^b_s}=O(\abs{s}^{-\frac{1}{2}-(b-\frac{1}{2})_+})\norm{\varphi}_{L_{\nu}^2}$ for all $b\in[0,1]$. Overall we derived the estimate $\norm{p}_{X^b_s}=O(\abs{s}^{-\frac{1}{2}-(b-\frac{1}{2})_+}) \norm{\ynice}_{\Ho}$ for all $b\in[0,1]$. Finally, this together with (\ref{R}) implies
\begin{align*}
 \norm{v}_{L^2} &\leq C\abs{s}^{-1}(\norm{w}_{L^2} + \norm{\nabla p}_{L^2}) \\
 &\leq C\abs{s}^{-1}\norm{w}_{L^2} + C\norm{p}_{H^1_s} \\
 &\leq C\abs{s}^{-1}\norm{\ynice}_{\Ho}
\end{align*}
and because of $\norm{p}_{X^{\frac{1}{2}}}\leq\norm{p}_{X^{\frac{1}{2}}_s}$ for $\abs{s}\leq 1$
\begin{align*}
 \norm{\psi}_{L_{\nu}^2} &\leq \abs{s}^{-1}\norm{\varphi}_{L_{\nu}^2} 
   + \norm{\Gamma p}_{L^2}\left(\int_0^{\infty}\frac{1}{\abs{is+\tau}^2} d\nu(\tau)\right)^{\frac{1}{2}} \\
 &\leq \abs{s}^{-1}\norm{\varphi}_{L_{\nu}^2} + C\abs{s}^{-\frac{1}{2}}\norm{p}_{X^{\frac{1}{2}}_s} \\
 &\leq C\abs{s}^{-1}\norm{\ynice}_{\Ho}.
\end{align*}
This concludes the proof of Theorem \ref{thm: Preliminary estimates} part (ii).

\subsection{Spectrum at $0$}
Let us prove part (iii) of Theorem \ref{thm: Preliminary estimates}. 

``$\Rightarrow$''. Let us first assume that $\ynice=(q,w,\varphi)\in\Ho$ and $\xnice=(p,v,\psi)\in D(\A)$ satisfies (\ref{A}) for $s=0$. There is a function $u\in H^1(\Omega)$ such that $w=\nabla u$. We may assume $\int_{\Omega} u=0$ to make $u$ unique. Then (\ref{A}) for $s=0$ is
\begin{equation}\label{eq: A for s=0}
 \begin{cases}
  \Div v(x) = q(x) &  (x\in\Omega), \\
  \nabla p(x) = w(x) = \nabla u(x) & (x\in\Omega), \\
  \tau\psi(\tau,x)-p(x) = \varphi(\tau,x) & (\tau>0, x\in\partial\Omega), \\
  -v\cdot n(x) + \int_0^{\infty} \psi(\tau,x) d\nu(\tau) = 0 & (x\in\partial\Omega).
 \end{cases}
\end{equation}
From the second line we see that necessarily $p=u+\alpha$ for some complex number $\alpha$. We have
\begin{equation}\label{eq: psi}
 \psi = \frac{\varphi + \Gamma u + \alpha}{\tau} \in (L_{\nu}^1\cap L_{\nu}^2)(0,\infty; L^2(\partial\Omega)) .
\end{equation}
The $L^1_{\nu}$-inclusion follows by the definition of $D(\A)$ as explained in the paragraph following (\ref{eq: first order energy space}). Let us now specialize to the situation $q, w = 0$ and $\norm{\varphi}_{L^2_{\nu}}\leq 1$. Then $u=0$. By the existence of $\A^{-1}$ there must be a uniform bound $\abs{\alpha}\leq C$ where the constant does not depend on $\varphi$. Because of this, (\ref{eq: psi}) and $\int_0^{\infty}\tau^{-1} d\nu(\tau)<\infty$ we deduce a bound $\norm{\tau^{-1}\varphi}_{L^1_{\nu}}=\norm{\psi}_{L^1_{\nu}} + C\leq C$ where $C$ does not depend on $\varphi$. Since this is true for all $\varphi\in L^2_{\nu}(0,\infty; L^2(\partial\Omega))$ we deduce that the function $(0,\infty)\owns\tau\mapsto\tau^{-1}$ is in $L^2_{\nu}(0,\infty)$. If we use this in the $L^2_{\nu}$-inclusion in (\ref{eq: psi}) we see that $\norm{\tau^{-1}\varphi}_{L^2_{\nu}}=\norm{\psi}_{L^2_{\nu}} + C\leq C$ where $C$ does not depend on $\varphi$. Thus $\tau^{-1}$ is an $L^2_{\nu}$-multiplier and thus it must be bounded with respect to the measure $\nu$.

``$\Leftarrow$''. Assume now that $\nu|_{(0,\varepsilon)}=0$ for some $\varepsilon>0$. Given $\ynice=(q,w,\varphi)\in\Ho$ we show that there is a unique solution $\xnice=(p,v,\psi)\in D(\A)$ of (\ref{eq: A for s=0}). From the second line of (\ref{eq: A for s=0}) we see that necessarily $p=u+\alpha$ for some complex number $\alpha$ and $u$ as in the first part of the proof. The definition of $\Ho$ forces the necessity of the ansatz $v=-\nabla U$ for some function $U\in H^1(\Omega)$ with $\int_{\Omega} U=0$ for uniqueness purposes. It remains to uniquely determine $\alpha$ and $U$ since then $\psi$ is uniquely given by (\ref{eq: psi}). Let $h=-\int_0^{\infty} \psi d\nu\in L^2(\partial\Omega)$. Then the first and the last line of (\ref{eq: A for s=0}) are equivalent to
\begin{equation}\nonumber
 \begin{cases}
  -\Delta U(x) = q(x) &  (x\in\Omega), \\
  \partial_n U(x) = h(x) & (x\in\partial\Omega).
 \end{cases}
\end{equation}
By the Poincar\'e inequality this equation has a solution $U$ - which is unique under the constraint $\int_{\Omega} U=0$ - if and only if
\begin{align}\nonumber
 0 &= \int_{\Omega} q + \int_{\partial\Omega} h dS \\ \label{eq: condition on alpha}
 &= \int_{\Omega} q - \int_{\partial\Omega}\left( \khat(0) \Gamma u + \int_{\varepsilon}^{\infty} \frac{\varphi(\tau)}{\tau} d\nu(\tau)\right)dS -\alpha\abs{\partial\Omega}\khat(0). 
\end{align}
In the second equality we also used (\ref{eq: psi}). Since $\khat(0)\neq0$ this determines $\alpha$ and thus also $U$ uniquely. This completes the proof.

\subsection{The range of $\A$}\label{sec: range of A}
In the case that $\A$ is not invertible (i.e. $(\tau\mapsto\tau^{-1})\notin L^{\infty}_{\nu}$) in spite of Theorem \ref{thm: Martinez} it is important to know the image $R(\A)$ of $\A$. To characterize the range we have to distinguish two cases: (i) $(\tau\mapsto\tau^{-1})\in L^{2}_{\nu}$ and (ii) $(\tau\mapsto\tau^{-1})\notin L^{2}_{\nu}$. In case (ii) for a given $\varphi\in L^2_{\nu}(0,\infty;L^2(\partial\Omega))$ there might exist no $p\in H^1(\Omega)$ such that
\begin{equation}\nonumber
 \left( \tau \mapsto \frac{\varphi(\tau)+\Gamma p}{\tau} \right) \in L^2_{\nu}(0,\infty; L^2(\partial\Omega)) .
\end{equation}
In the case that $p$ exists, its boundary value $\Gamma p$ is uniquely determined and the function $(\tau\mapsto\varphi(\tau)/\tau)$ is integrable with respect to $\nu$. Therefore we can define the complex number
\begin{equation}\label{eq: mphi}
 m_{\varphi,p} = \int_{\partial\Omega} \int_0^{\infty} \frac{\varphi(\tau)+\Gamma p}{\tau} d\nu(\tau) dS .
\end{equation}
Equipped with this notation we can now formulate:

\begin{Theorem}\label{thm: range of A} Assume that $\A$ is not invertible (i.e. $(\tau\mapsto\tau^{-1})\notin L^{\infty}_{\nu}$).
 (i) If $(\tau\mapsto\tau^{-1})\in L^{2}_{\nu}$, then 
 \begin{equation}\nonumber
  R(\A) = \left\{ (q,w,\varphi)\in\Ho ; \int_0^{\infty} \norm{\frac{\varphi(\tau)}{\tau}}^2_{L^2(\partial\Omega)} d\nu(\tau) < \infty \right\} .
 \end{equation}
 (ii) If $(\tau\mapsto\tau^{-1})\notin L^{2}_{\nu}$, then 
 \begin{align*}
  R(\A) = \left\{ (q,w,\varphi)\in\Ho ; \exists p\in H^1(\Omega) :
           w=\nabla p, \int_{\Omega} q = m_{\varphi,p} \right. \text{ and }  \\
           \left. \int_0^{\infty} \norm{\frac{\varphi(\tau)+\Gamma p}{\tau}}^2_{L^2(\partial\Omega)} d\nu(\tau) < \infty \right\}
 \end{align*}
 where $m_{\varphi,p}$ is given by (\ref{eq: mphi}). If $(q,w,\varphi)$ is in the image of $\A$ then $p$ is unique. In fact it is the first component of the pre-image of $(q,w,\varphi)$.
\end{Theorem}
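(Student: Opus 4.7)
My approach is to directly analyze the equation $\A\xnice = \ynice$ with $\xnice=(p,v,\psi)$, $\ynice=(q,w,\varphi)$; this is exactly (\ref{A}) at $s=0$, i.e.\ the system (\ref{eq: A for s=0}) that was already set up in the proof of Theorem \ref{thm: Preliminary estimates}(iii). The second line forces $\nabla p = w$, so a necessary condition is $w\in\nabla H^1(\Omega)$ (which is built into $\ynice\in\Ho$), and the potential $p$ is determined up to an additive constant. The third line forces $\psi(\tau)=(\varphi(\tau)+\Gamma p)/\tau$, and the $\Ho$-membership of $\xnice$ translates to
\[
\int_0^\infty \left\| \frac{\varphi(\tau)+\Gamma p}{\tau} \right\|_{L^2(\partial\Omega)}^2 d\nu(\tau) < \infty.
\]
Once this holds, the argument recorded right after (\ref{eq: first order energy space}) yields $\psi\in L^1_{\nu}$, so the integral defining $m_{\varphi,p}$ in (\ref{eq: mphi}) converges absolutely. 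For $v$, writing $v=-\nabla U$ with $\int_\Omega U=0$ turns the first and fourth lines of (\ref{eq: A for s=0}) into the Neumann problem $-\Delta U = q$ with $\partial_n U = -\int_0^\infty \psi\,d\nu$, whose compatibility condition (obtained by integrating $-\Delta U = q$ against $1$) is exactly $\int_\Omega q = m_{\varphi,p}$. Conversely, if these ingredients are in place the construction produces $\xnice\in D(\A)$: $\tau\psi - \Gamma p = \varphi\in L^2_\nu$ gives finiteness of $E_1^{hom}$, and the boundary condition holds by design.

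Next I would branch on whether $\tau^{-1}\in L^2_\nu$. In case (ii), two potentials $p,p'$ of $w$ that differ by a constant $c\ne 0$ give $(\Gamma p-\Gamma p')/\tau = c/\tau \notin L^2_\nu(L^2(\partial\Omega))$, so at most one $p$ satisfies the integrability condition above; this yields uniqueness of the first component of the preimage and forces the constraint $\int_\Omega q = m_{\varphi,p}$ to appear as an explicit requirement, exactly reproducing the statement (ii). In case (i), $\tau^{-1}\in L^2_\nu$ implies $\Gamma p/\tau\in L^2_\nu(L^2(\partial\Omega))$ for every $p\in H^1(\Omega)$, so the integrability condition on $\psi$ reduces to $\int_0^\infty \|\varphi(\tau)/\tau\|_{L^2(\partial\Omega)}^2\,d\nu(\tau)<\infty$; and shifting $p$ to $p+\alpha$ shifts $m_{\varphi,p}$ by $\alpha|\partial\Omega|\,\khat(0)$, with $\khat(0)=\int_0^\infty \tau^{-1}\,d\nu \in (0,\infty)$ by (\ref{eq: k is integrable}) and the assumption that $\A$ is not invertible but $\nu$ is nontrivial. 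Hence $\alpha$ can always be chosen to enforce the compatibility condition, so no further constraint is needed beyond the $L^2_\nu$-integrability of $\varphi/\tau$, which is precisely the statement (i).

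The main obstacle I expect is the careful bookkeeping of the additive-constant freedom in $p$ and its interaction with the integrability class of $\tau^{-1}$: this is what distinguishes cases (i) and (ii) and is the source of both the uniqueness assertion at the end of (ii) and the absence of a $q$-constraint in (i). Checking the absolute convergence of $m_{\varphi,p}$ via the $L^1_\nu$-trick from Section \ref{sec: Introduction} is the one technical step that needs to be invoked rather than re-derived; the Neumann solvability and the trace estimates are then routine given the material assembled in Sections \ref{sec: Semigroup approach} and \ref{sec: Correspondence between resolvents}.
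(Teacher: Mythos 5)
Your argument is correct and follows essentially the same route as the paper: reduce to the system (\ref{eq: A for s=0}), recover $p=u+\alpha$ and $\psi=(\varphi+\Gamma p)/\tau$, solve a Neumann problem for $v$ with the compatibility condition $\int_\Omega q=m_{\varphi,p}$, and then branch on whether $\tau^{-1}\in L^2_\nu$ to decide whether the free constant $\alpha$ can absorb that condition (case (i)) or is pinned down by the $L^2_\nu$ requirement (case (ii)). The only difference is that you make explicit the paper's implicit step that in case (i) shifting $\alpha$ moves $m_{\varphi,p}$ by $\alpha\abs{\partial\Omega}\khat(0)\neq0$, which is a faithful expansion of ``proceed as in the $\Leftarrow$-part of the proof of Theorem \ref{thm: Preliminary estimates}(iii)''.
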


\begin{proof}
 Let $\ynice=(q,w,\varphi)\in\Ho$. Clearly $\ynice\in R(\A)$ if and only if we can find $\xnice=(p,v,\psi)\in\Ho_1$ such that $\A\xnice=\ynice$. Let $u\in H^1(\Omega)$ be such that $\nabla u = w$ and $\int_{\Omega} u = 0$. As in the proof of Theorem \ref{thm: Preliminary estimates}(iii) we see that necessarily $p=u+\alpha$ for some complex number $\alpha$ and 
 \begin{equation}\label{eq: range A definition of psi}
  \frac{\varphi + \Gamma p}{\tau} = \psi \in L_{\nu}^2(0,\infty; L^2(\partial\Omega)) .
 \end{equation}
 
 Let us assume that case (i) is valid. Then the so defined $\psi$ is in $L^2_{\nu}$ if and only if $(\tau\mapsto\varphi(\tau)/\tau)$ is square integrable with respect to $\nu$. Now one can proceed as in the ``$\Leftarrow$''-part of the proof of Theorem \ref{thm: Preliminary estimates}(iii) to find the unique $p$ and $v$ such that $\A \xnice = \ynice$.
 
 Let us now assume that case (ii) is valid. By (\ref{eq: range A definition of psi}) it is clear that the existence of $p$ as in the definition of $R(\A)$ is necessary. From the fact that $(\tau\mapsto\tau^{-1})$ is not square integrable we see that $\Gamma p$ is uniquely defined. Now we can again proceed as in the ``$\Leftarrow$''-part of the proof of Theorem \ref{thm: Preliminary estimates}(iii) to find the unique $p$ and $v$ such that $\A \xnice = \ynice$. The condition $\int_{\Omega} q = m_{\varphi,p}$ on $\ynice$ comes from (\ref{eq: condition on alpha}), where we have to replace $\khat(0) \Gamma u + \int_{\varepsilon}^{\infty} \frac{\varphi(\tau)}{\tau} d\nu(\tau) + \alpha\khat(0)$ by $\int_{0}^{\infty} \frac{\varphi(\tau)+\Gamma p}{\tau} d\nu(\tau)$ in our situation.
\end{proof}

\section{An upper estimate for $\norm{(is+\A)^{-1}}$ if $s\rightarrow\infty$}\label{sec: upper resolvent estimate}
We are seeking for an increasing function $M:[1,\infty)\rightarrow[1,\infty)$ such that for some constant $C>0$
\begin{align*}
 \norm{(is+\A)^{-1}} \leq C M(\abs{s}) \quad (\abs{s}\geq 1).
\end{align*}
In this section we want to show that the function $M(s) = 1/\Re \khat(is)$ is an upper bound (up to a constant) for the norm of $(is+\A)^ {-1}$ when $\abs{s}$ is large and if some additional assumptions on the acoustic impedance $\khat$ and the domain are satisfied.

More precisely we assume that the acoustic impedance satisfies
\begin{align}\label{eq: additional assumptions}
 \left[\abs{\khat}\frac{\abs{\khat}^2}{(\Re\khat)^2}\right](is) = o\left(\frac{1}{L(s)}\right) \text{ as } s\rightarrow\infty, \\ \nonumber
 \text{where } L(s) = s^{\alpha}(1+\log(s)) \text{ for } s \geq 1 .
\end{align}
The real number $\alpha\in[0,1)$ is a domain dependent constant which will be defined below. Note that for $\alpha\geq1$ there can not be any integrable completely monotonic function which satisfies this condition. 

Let $(u_j)$ be the sequence of normalized eigenfunctions of the Neumann Laplacian with respect to the corresponding (non-negative) frequencies $(\lambda_j)$. That is
\begin{equation}\label{eq: Neumann eigenvalue problem}
 \begin{cases}
   \lambda_j^2u_j(x) + \Delta u(x) = 0 & (x\in\Omega), \\
   \partial_n u_j(x) = 0 &  (x\in\partial\Omega), \\
   \norm{u_j}_{L^2(\Omega)} = 1 .
 \end{cases}
\end{equation}
The eigenfrequencies are counted with multiplicity and we may order them so that $0\leq \lambda_1 \leq \lambda_2 \leq \ldots$. We call a function $p\in L^2(\Omega)$ a spectral cluster of width $\delta>0$ whenever $\sup\{\abs{\lambda_j-\lambda_i}; a_j, a_i \neq 0\}\leq \delta$ where $p=\sum a_j u_j$ is the expansion of $p$ into eigenfunctions. We define the (mean) frequency $\lambda(p)\geq0$ of $p$ by $\lambda(p)^2=\sum \abs{(a_j/\norm{p}_{L^2})}^2\lambda_j^2$. We assume that the domain has the property that for sufficiently small $\delta>0$ there are constants $c,C>0$ such that for any spectral cluster $p$ of width $\delta$ the following estimate is true
\begin{align}\label{eq: upper and lower estimate for Neumann eigenfunctions}
 c\norm{p}_{L^2(\Omega)}^2 \leq \int_{\partial\Omega} \abs{\Gamma p}^2 dS \leq C \lambda(p)^{\alpha} \norm{p}_{L^2(\Omega)}^2 .
\end{align}
We call the left inequality the \emph{lower estimate} and the right inequality the \emph{upper estimate}. Note that the upper estimate is trivially satisfied for $\alpha=1$ by applying the trace inequality from Lemma \ref{thm: trace inequality}. It is indeed reasonable to assume that this estimate holds for some $\alpha$ strictly smaller than $1$. For example if the boundary of $\partial\Omega$ is of class $C^{\infty}$ then both estimates hold with $\alpha=2/3$. See \cite{BarnettHassellTacy2016} for this result. For $\Omega$ being an interval one can choose $\alpha=0$ and for a square $\alpha=1/2$ is optimal. 

This section is devoted to the proof of our second main result:

\begin{Theorem}\label{thm: upper resolvent estimate}
 Assume that (\ref{eq: additional assumptions}) is satisfied, where $\alpha\in[0,1)$ is such that (\ref{eq: upper and lower estimate for Neumann eigenfunctions}) holds for all spectral cluster $p$ of sufficiently small width $\delta>0$. Then there is a constant $C>0$ such that
 \begin{equation}\nonumber
  \norm{R(s)}_{L^2\rightarrow L^2} \leq \frac{C}{s\Re\khat(is)}
 \end{equation}
 for all $s\geq 1$.
\end{Theorem}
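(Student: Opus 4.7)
The strategy is to show, for $p = R(is)f$,
\begin{equation*}
\|p\|_{L^2}^2 \le C\|\Gamma p\|_{L^2(\partial\Omega)}^2 + \frac{C\|f\|^2}{s^2(\Re\khat(is))^2},
\end{equation*}
and then combine with the identity from $a_{is}(p,p) = (f,p)$. Taking imaginary parts of the latter gives $s\Re\khat(is)\|\Gamma p\|^2 \le \|f\|\|p\|$, so AM--GM applied to the displayed inequality yields the target $\|p\| \le C\|f\|/(s\Re\khat(is))$.

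To prove the displayed inequality, expand $p = \sum_j a_j u_j$ in the Neumann eigenbasis of (\ref{eq: Neumann eigenvalue problem}). Testing $a_{is}(p, u_j) = (f, u_j)$ and integrating by parts produces the scalar relation
\begin{equation*}
(\lambda_j^2 - s^2) a_j = (f, u_j)_{\Omega} - is\khat(is)(\Gamma p, \Gamma u_j)_{\partial\Omega}.
\end{equation*}
Fix a parameter $\delta = \delta(s)$ to be chosen small, and split $p = p^{cl} + p^\perp$ according to whether $|\lambda_j - s| \le \delta$. For $j \in I^\perp$ the denominator is $\gtrsim s\delta$ in the near-regime $\lambda_j \in [s/2, 2s]$ and much larger outside; squaring and summing the equation for $a_j$, together with the cluster upper bound $\sum_{j \in J}|(\Gamma p, \Gamma u_j)|^2 \le C\lambda^\alpha \|\Gamma p\|^2$ from (\ref{eq: upper and lower estimate for Neumann eigenfunctions}) applied on each width-$\delta$ spectral cluster $J$, yields
\begin{equation*}
\|p^\perp\|_{L^2}^2 \lesssim \frac{\|f\|^2}{s^2\delta^2} + \frac{s^\alpha|\khat(is)|^2}{\delta^2}\|\Gamma p\|^2.
\end{equation*}
For the cluster part, the lower bound in (\ref{eq: upper and lower estimate for Neumann eigenfunctions}) gives $\|p^{cl}\|^2 \le c^{-1}\|\Gamma p^{cl}\|^2 \le 2c^{-1}(\|\Gamma p\|^2 + \|\Gamma p^\perp\|^2)$, so it remains to control $\|\Gamma p^\perp\|^2$.

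The main obstacle is exactly this last step, since the boundary traces $\Gamma u_j$ are not orthogonal in $L^2(\partial\Omega)$ and a naive triangle inequality over the $\sim s/\delta$ width-$\delta$ bands in $[s/2, 2s]$ is wasteful. My plan is to bound $\|\Gamma p^\perp\|^2$ via a dyadic decomposition of the frequency axis: on each dyadic annulus $\lambda_j \in [2^\ell, 2^{\ell+1}]$ the cluster upper estimate may be applied, and only $O(\log s)$ scales contribute significantly, producing exactly the factor $L(s) = s^\alpha(1+\log s)$ appearing in the hypothesis. Choosing $\delta^2$ as an appropriate multiple of $L(s)s^\alpha|\khat(is)|^2$ to make the $\|\Gamma p\|^2$-coefficient of order one, the additional assumption (\ref{eq: additional assumptions}), $|\khat|(|\khat|/\Re\khat)^2 = o(1/L(s))$, simultaneously guarantees that $\delta(s) \to 0$ stays below the width threshold needed by (\ref{eq: upper and lower estimate for Neumann eigenfunctions}) and that the residual $\|f\|^2$-coefficient is controlled by $1/(s\Re\khat(is))^2$, closing the estimate.
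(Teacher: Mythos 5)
Your overall skeleton is genuinely different from the paper's and is an attractive idea: rather than splitting into cases as the paper does, you extract the ``identity'' $s\Re\khat(is)\|\Gamma p\|_{L^2(\partial\Omega)}^2=\Im a_{is}(p,p)\le\|f\|\|p\|$ once and for all and then only need a bound of the form $\|p\|^2\le C\|\Gamma p\|^2 + C\|f\|^2/(s\Re\khat)^2$, which closes by AM--GM. The eigenfunction expansion, the scalar relation $(\lambda_j^2-s^2)a_j=(f,u_j)-is\khat(is)(\Gamma p,\Gamma u_j)$, the duality reading of the upper cluster estimate (which gives $\sum_{j\in J}|(\Gamma p,\Gamma u_j)|^2\le C\lambda_J^{\alpha}\|\Gamma p\|^2$), the resulting bound on $\|p^\perp\|^2$, and the use of the lower cluster estimate for $\|p^{cl}\|^2$ are all correct.

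The gap is precisely where you flag it, but I do not think you have actually shown how to close it. Bounding $\|\Gamma p^\perp\|^2$ by triangle inequality over clusters and Cauchy--Schwarz gives $\|\Gamma p^\perp\|^2\lesssim (L(s)/s)\sum_j|\lambda_j^2-s^2||a_j|^2$, where the logarithm in $L(s)$ comes from the harmonic sum of $1/|\lambda_J^2-s^2|$ near $\lambda=s$ (this is the paper's Lemma~\ref{thm: auxiliary lemma 2}, and it is this $1/|\lambda_J^2-s^2|$-weighted sum, not your dyadic picture, that actually produces $\log s$). The quantity $\sum_j|\lambda_j^2-s^2||a_j|^2$ is the paper's $a^N_{is}(p,\tilde p)$, and when you feed it back through the scalar relation you get a boundary term $s|\khat|\,\|\Gamma p\|\,\|\Gamma\tilde p^\perp\|$, with $\|\Gamma\tilde p^\perp\|$ again of the size of $\|\Gamma p^\perp\|$. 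So the estimate you need is of the form $X^2\lesssim A + B X$ with $X=\|\Gamma p^\perp\|$, and you must solve this quadratic and verify that the self-referential term can be absorbed under assumption (\ref{eq: additional assumptions}); this is the whole content of the paper's Case~1/Case~2 dichotomy together with Lemmas~\ref{thm: auxiliary lemma 1} and \ref{thm: auxiliary lemma 2}. Your proposal never spells this out, and the sentence ``closing the estimate'' in your last paragraph is asserted, not argued. Also, the suggestion to make $\delta$ depend on $s$ is not needed (a fixed small $\delta$ suffices once the absorption is done) and is a distraction; moreover with your choice $\delta^2\sim L(s)s^\alpha|\khat|^2$ the $\|\Gamma p\|^2$-coefficient becomes $1/L(s)\to0$, not ``order one'' as you claim. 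If you carry out the Cauchy--Schwarz and absorption explicitly your route would give a valid and somewhat more streamlined proof than the paper's, but as written the argument has a real hole at its most delicate point.
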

Compare this result to Theorem \ref{thm: Preliminary estimates} to obtain that the norm of $\norm{(is+\A)^{-1}}$ is bounded by $\frac{C}{\Re\khat(is)}$ under the constraints of the preceding theorem.

\subsection{Some auxiliary definitions}\label{sec: Some auxiliary definitions}
We fix a $\delta>0$ such that (\ref{eq: upper and lower estimate for Neumann eigenfunctions}) is true for any spectral cluster of width $3\delta$. For $p,q\in H^1(\Omega)$ we define the Neumann form by
\begin{align*}
 a_z^N(p,q) = z^2\int_{\Omega} p\overline{q} + \int_{\Omega} \nabla p\cdot \nabla \overline{q} .
\end{align*}
We cover $[0,\infty)$ by disjoint intervals $I_k=[k\lambda, (k+1)\lambda)$ for $k=0,1,2,\ldots$ such that
\begin{enumerate}
 \item [(i)] $\lambda \in [2\delta, 3\delta]$,
 \item [(ii)] $\exists k_c\in\N: I_{k_c} \supset (s-\delta, s+\delta)$.
\end{enumerate}
The covering depends on $s\geq1$ but this does not matter for our considerations. With the help of this partition we can uniquely expand every function $p\in L^2(\Omega)$ in terms of spectral clusters in the following way:
\begin{align*}
 p = \sum_{k=0}^{\infty} c_k p_k \text{ where }
 p_k = \sum_{\lambda_j\in I_k} a_j u_j, \, \norm{p_k}_{L^2(\Omega)}=1 .
\end{align*}
Let $s_k(p)\in I_k$ be such that
\begin{align*}
 s_k^2(p) = \int_{\Omega} \abs{\nabla p_k}^2.
\end{align*}
Let $p^0_{+(-)}=\sum_{k>(<)k_c} c_k p_k$ and $p^0=p^0_- + p^0_+$. Let $p_c=c_{k_c}p_{k_c}$. Obviously $p=p^0+p_c$. Define 
\begin{align*}
 p_+ =
  \begin{cases}
   p_+^0 + p_c & \text{ if } a_{is}^N(p_c)\geq0 , \\
   p_+^0 & \text{ else} ,
  \end{cases}
\end{align*}
and let $p_-$ be given by $p=p_+ + p_-$. Finally let $\ptilde=p_+-p_-$.

\subsection{Some auxiliary lemmas}
For the remaining part of Section \ref{sec: upper resolvent estimate} we use the notation introduced in Subsection \ref{sec: Some auxiliary definitions} and we assume that $\abs{s}\geq 1$. 
\begin{Lemma}\label{thm: auxiliary lemma 1}
 For all $p\in H^1(\Omega)$ we have $a_{is}^N(p,\ptilde)\geq \abs{s}\delta\norm{p^0}_{L^2(\Omega)}^2$.
\end{Lemma}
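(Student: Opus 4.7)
My plan is to exploit the orthogonality of the spectral clusters to diagonalize $a_{is}^N(p,\ptilde)$ as a signed sum of cluster contributions, and then bound each signed contribution using the geometry of the partition $\{I_k\}$.

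First, I would write $p=\sum_k c_kp_k$ and note that the clusters $p_k$ are supported on disjoint eigenspaces of the Neumann Laplacian (because the intervals $I_k$ are disjoint), so they are pairwise orthogonal both in $L^2(\Omega)$ and with respect to the Dirichlet inner product $\int\nabla(\cdot)\cdot\overline{\nabla(\cdot)}$. Hence $a_{is}^N(p_j,p_k)=0$ for $j\neq k$. Moreover, on a single cluster, $a_{is}^N(p_k,p_k)=-s^2\|p_k\|_{L^2}^2+\|\nabla p_k\|_{L^2}^2 = s_k(p)^2-s^2$, which is real. Writing $\tilde p=\sum_k \varepsilon_k c_kp_k$ with $\varepsilon_k\in\{-1,+1\}$ according to the definition in Subsection~4.1, this gives
\begin{equation*}
 a_{is}^N(p,\ptilde) \;=\; \sum_k \varepsilon_k |c_k|^2\bigl(s_k(p)^2-s^2\bigr),
\end{equation*}
which is real.

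Next I would bound each term. By condition (ii), $I_{k_c}\supset(s-\delta,s+\delta)$, and together with (i) this forces $(k_c+1)\lambda\geq s+\delta$ and $k_c\lambda\leq s-\delta$. For $k>k_c$ we have $s_k(p)\in I_k$ with $s_k(p)\geq(k_c+1)\lambda\geq s+\delta$, so
\begin{equation*}
 s_k(p)^2-s^2 = (s_k(p)-s)(s_k(p)+s) \geq \delta\cdot s = |s|\delta,
\end{equation*}
and $\varepsilon_k=+1$ by construction. Symmetrically, for $k<k_c$ we have $s_k(p)<k_c\lambda\leq s-\delta$, hence $s^2-s_k(p)^2\geq \delta\cdot s=|s|\delta$, and $\varepsilon_k=-1$, so the contribution is $(-1)\cdot|c_k|^2(s_k(p)^2-s^2)=|c_k|^2(s^2-s_k(p)^2)\geq|s|\delta|c_k|^2$. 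Finally, for the critical cluster $k=k_c$, the sign $\varepsilon_{k_c}$ is chosen precisely so that $\varepsilon_{k_c}(s_{k_c}(p)^2-s^2)=|s_{k_c}(p)^2-s^2|\geq 0$, so this term contributes nonnegatively.

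Combining these three cases,
\begin{equation*}
 a_{is}^N(p,\ptilde) \;\geq\; \sum_{k\neq k_c} |c_k|^2\cdot|s|\delta \;=\; |s|\delta\,\|p^0\|_{L^2(\Omega)}^2,
\end{equation*}
which is the claim. I do not anticipate a genuine obstacle here: the lemma is essentially a bookkeeping exercise once one observes the orthogonality of the clusters and exploits the fact that the partition was built to give a uniform spectral gap of at least $\delta$ on both sides of $s$. The only point that needs a moment of care is verifying that the choice of sign on the critical cluster makes its contribution nonnegative, but that is exactly the purpose of the case distinction in the definition of $p_+$.
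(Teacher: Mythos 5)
Your proof is correct and follows essentially the same route as the paper: both exploit the pairwise orthogonality of the spectral clusters (in $L^2$ and in the Dirichlet form) to diagonalize $a_{is}^N(p,\ptilde)$ into a signed sum of cluster contributions, verify that the sign choices in the definition of $\ptilde$ render every term nonnegative, and use the spectral gap built into the partition to bound each off-critical term from below by $|s|\delta\,|c_k|^2$. The paper compresses all of this into a one-line chain of inequalities; you have simply spelled out the bookkeeping behind it.
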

\begin{proof}
 \begin{align*}
  a^N_{is}(p,\ptilde) \geq a^N_{is}(p^0,(\ptilde)^0) 
  = \sum_{k\neq k_c} \abs{s^2 - s_k^2} \abs{c_k}^2 
  \geq s\delta \sum_{k\neq k_c} \abs{c_k}^2 
  = s\delta \norm{p^0}_{L^2(\Omega)}^2 .
 \end{align*} 
\end{proof}
A little bit more involved is the proof of the next lemma.
\begin{Lemma}\label{thm: auxiliary lemma 2}
 There is a constant $C>0$ (depending on $\delta$ and $\alpha$) such that for all $p\in H^1(\Omega)$
 \begin{equation}\nonumber
  \int_{\partial\Omega} \abs{\Gamma p^0}^2 dS \leq C \abs{s}^{\alpha}(1+\log(\abs{s})) \frac{a^N_{is}(p,\ptilde)}{\abs{s}}
 \end{equation}
\end{Lemma}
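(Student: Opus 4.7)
\emph{Proof plan.} The approach is to decompose $\Gamma p^0 = \sum_{k\neq k_c} c_k \Gamma p_k$ and use a weighted Cauchy--Schwarz in order to relate $\int_{\partial\Omega}\abs{\Gamma p^0}^2\,dS$ to the coefficient quadratic form appearing in Lemma \ref{thm: auxiliary lemma 1}. Indeed, the computation underlying that lemma shows $a^N_{is}(p,\ptilde)=\sum_k \abs{c_k}^2\abs{s^2-s_k^2}$. Hence, by the triangle inequality in $L^2(\partial\Omega)$ followed by Cauchy--Schwarz with the weights $\abs{s^2-s_k^2}^{\pm 1/2}$,
\begin{equation*}
\int_{\partial\Omega}\abs{\Gamma p^0}^2\,dS \;\leq\; \Big(\sum_{k\neq k_c}\abs{c_k}^2\abs{s^2-s_k^2}\Big)\cdot \Sigma, \qquad \Sigma := \sum_{k\neq k_c}\frac{\norm{\Gamma p_k}_{L^2(\partial\Omega)}^2}{\abs{s^2-s_k^2}},
\end{equation*}
and the first factor is bounded by $a^N_{is}(p,\ptilde)$. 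It therefore suffices to prove the purely arithmetic bound $\Sigma \lesssim \abs{s}^{\alpha-1}(1+\log\abs{s})$.

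For this, I would feed in the upper estimate in \eqref{eq: upper and lower estimate for Neumann eigenfunctions}: each $p_k$ is a spectral cluster of width at most $3\delta$, has unit $L^2(\Omega)$-norm, and its mean frequency equals $s_k$; so $\norm{\Gamma p_k}_{L^2(\partial\Omega)}^2 \leq C s_k^{\alpha}$ for all $k$ with $s_k$ above some fixed constant (the finitely many low-$k$ exceptions satisfy $\abs{s^2-s_k^2}\gtrsim \abs{s}^2$ and contribute only $O(\abs{s}^{-2})$ to $\Sigma$). Using $s_k\in I_k = [k\lambda,(k+1)\lambda)$ with $\lambda\in[2\delta,3\delta]$, I would split the sum into three regimes
\begin{equation*}
\text{(A) } s_k \leq \abs{s}/2, \qquad \text{(B) } s_k\geq 2\abs{s}, \qquad \text{(C) } \abs{s}/2 \leq s_k \leq 2\abs{s},\ k\neq k_c.
\end{equation*}
In regime (A) we have $\abs{s^2-s_k^2}\asymp\abs{s}^2$ and the contribution is $\lesssim \abs{s}^{-2}\sum_{k\lambda\leq \abs{s}/2}(k\lambda)^{\alpha}\lesssim \abs{s}^{\alpha-1}$. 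In regime (B) we have $\abs{s^2-s_k^2}\asymp s_k^2$ and the contribution is $\lesssim \sum_{k\lambda \geq 2\abs{s}}(k\lambda)^{\alpha-2}\lesssim \abs{s}^{\alpha-1}$, using $\alpha<1$.

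The main obstacle, and the source of the logarithmic loss, is the near-diagonal regime (C). Here the two properties of the covering are essential: since $I_{k_c}\supset (s-\delta,s+\delta)$ and $\lambda\geq 2\delta$, one derives the discrete gap estimate $\abs{s-s_k}\geq \delta + (\abs{k-k_c}-1)\lambda \gtrsim \abs{k-k_c}$ for every $k\neq k_c$, and consequently $\abs{s^2-s_k^2}\gtrsim \abs{s}\abs{k-k_c}$. Combined with $s_k^{\alpha}\leq (2\abs{s})^{\alpha}$, the contribution of (C) becomes
\begin{equation*}
\lesssim \frac{\abs{s}^{\alpha}}{\abs{s}} \sum_{1\leq j \leq \abs{s}/\lambda} \frac{1}{j} \;\lesssim\; \abs{s}^{\alpha-1}(1+\log\abs{s}),
\end{equation*}
where the harmonic sum, with its $\log$, appears to be unavoidable by this method. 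Assembling (A)--(C) yields $\Sigma \lesssim \abs{s}^{\alpha-1}(1+\log\abs{s})$, and multiplication by the bound $a^N_{is}(p,\ptilde)$ for the first factor gives the claim.
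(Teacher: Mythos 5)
Your proof is correct and follows essentially the same route as the paper: triangle inequality on $\Gamma p^0 = \sum_{k\neq k_c} c_k \Gamma p_k$, then Cauchy--Schwarz with the weights $\abs{s^2-s_k^2}^{\pm 1/2}$ to split off $a^N_{is}(p,\ptilde)$, then the upper estimate in \eqref{eq: upper and lower estimate for Neumann eigenfunctions} to reduce to a scalar sum, and finally the near-diagonal terms produce the harmonic sum and hence the logarithm. The only (cosmetic) difference is that the paper treats $p_+$ and $p_-$ separately and estimates the resulting one-sided sum $J$ by integral comparison, whereas you keep the full sum with the modulus $\abs{s^2-s_k^2}$ and use a three-regime split; the two bookkeeping schemes are interchangeable.
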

\begin{proof}
 Since $a^N_{is}(p,\ptilde)\geq a^N_{is}(p^0,(\ptilde)^0)$ we may assume that $p_c=0$. Because of 
 \begin{align*}
  \int_{\partial\Omega} \abs{\Gamma p}^2 dS \leq 2 \int_{\partial\Omega} \abs{\Gamma p_-}^2 + \abs{\Gamma p_+}^2 dS \\
  \text{and } a^N_{is}(p,\ptilde) = a^N_{is}(p_+) - a^N_{is}(p_-)
 \end{align*}
 we may assume without loss of generality that either $p=p_+$ or $p=p_-$. We show the proof in detail for the case $p=p_+$. The case $p=p_-$ is analogous and therefore we omit it. 
 \begin{align*}
  \int_{\partial\Omega} \abs{\Gamma p_+}^2 dS &= \norm{\sum_{k>k_c} c_k \Gamma p_k}_{L^2(\partial\Omega)}^2 \\
  &\leq \left(\sum_{k>k_c} \abs{c_k}\norm{\Gamma p_k}_{L^2(\partial\Omega)}\right)^2 \\
  &\leq \left(C\delta^{\frac{\alpha}{2}} \sum_{k>k_c} \abs{c_k} k^{\frac{\alpha}{2}}\right)^2 \\
  &\leq C\delta^{\alpha} \underbrace{\left(\sum_{k>k_c}\abs{c_k}^2(s_k^2-s^2)\right)}_{a^N_{is}(p_+)}
                         \underbrace{\left(\sum_{k>k_c}\frac{k^{\alpha}}{s_k^2 - s^2}\right)}_{=:J}
 \end{align*}
In the first line we used the continuity of the trace operator $\Gamma: H^1(\Omega)\rightarrow L^2(\partial\Omega)$. From the second to the third line we used the upper estimate (\ref{eq: upper and lower estimate for Neumann eigenfunctions}) together with $s_k\in I_k = \lambda[k,k+1)$ with $\lambda\in[2\delta,3\delta]$. It remains to estimate $J$. It is a well known trick to estimate sums of positive and decreasing summands by corresponding integrals.
\begin{align*}
 J &= \sum_{k>k_c} \frac{k^{\alpha}}{s_k^2 - s^2} \leq \sum_{k>k_c} \frac{k^{\alpha}}{\lambda^2k^2 - s^2} \\
 &\leq \frac{(k_c+1)^{\alpha}}{\lambda^2(k_c+1)^2-s^2} + \int_{k_c+1}^{\infty} \frac{x^{\alpha}}{\lambda^2x^2 - s^2} dx \\
 &=: J_1 + J_2 .
\end{align*}
It is not difficult to see that $J_1$ can be estimated by a constant times $\delta^{-1-\alpha}s^{\alpha-1}$. For $J_2$ we substitute $y=\lambda x/s$ and use that $\lambda(k_c+1)\geq1+\delta$. This yields
\begin{align*}
 J_2 &\leq C\delta^{-1-\alpha}s^{\alpha-1}\int_{1+\frac{\delta}{s}}^{\infty} \frac{y^{\alpha}}{y^2 - 1} dy \\
 &\leq C\delta^{-1-\alpha}s^{\alpha-1}\left(\int_{1+\frac{\delta}{s}}^{2} \frac{1}{y - 1} dy + \int_{2}^{\infty} \frac{1}{y^{2-\alpha}} dy\right) \\
 &\leq C \delta^{-1-\alpha}s^{\alpha-1}(\log(\frac{s}{\delta})+1) .
\end{align*}
This concludes the proof.
\end{proof}

\subsection{Proof of Theorem \ref{thm: upper resolvent estimate}}
Let $p\in H^1(\Omega)$ and $\abs{s}\geq 1$. We have to verify that 
\begin{equation}\nonumber
 \sup \{\abs{a_{is}(p,u)}; u\in H^1(\Omega),\, \norm{u}_{L^2(\Omega)}\leq 1\}\geq c\abs{s}\Re\khat(is)\norm{p}_{L^2(\Omega)}
\end{equation}
is true for some constant $c>0$ independent of $p$ and $s$. In the following we assume that $a^N_{is}(p_c)\geq 0$. This implies that $p_+=(p^0)_+ + p_c$ and $p_-=(p^0)_-$. The case $a^N_{is}(p_c)<0$ can be treated similarly and we therefore omit it. First we prove an auxiliary estimate with the help of Lemma \ref{thm: auxiliary lemma 2}:
\begin{align}\nonumber
 \int_{\partial\Omega} \abs{\Gamma p_+}^2 + \abs{\Gamma p_-}^2 dS &= \int_{\partial\Omega} \abs{\Gamma p_+^0}^2 + \abs{\Gamma p_-^0}^2 + \abs{\Gamma p_c}^2 + 2\Re (\Gamma p_+^0 \overline{\Gamma p_c}) dS \\
 \nonumber
 &\leq \int_{\partial\Omega} 2\abs{\Gamma p_+^0}^2 + \abs{\Gamma p_-^0}^2 + 2\abs{\Gamma p_c}^2 dS \\
 \label{eq: auxiliary estimate in main proof}
 &\leq CL(s)\frac{a^N_{is}(p,\ptilde)}{\abs{s}} + 2\int_{\partial\Omega} \abs{\Gamma p_c}^2 dS.
\end{align}
Let us define
\begin{equation}\nonumber
 L_1(s) = \left(\frac{\abs{\khat(is)}}{\Re\khat(is)}\right)^2 L(s) \geq L(s) .
\end{equation}
Our assumption (\ref{eq: additional assumptions}) on $k$ is equivalent to $|\khat|(is)=o(1/L_1(s))$ as $\abs{s}\rightarrow\infty$. Now we come to the final part of the proof which consists of distinguishing two cases. Essentially the first case means that $p$ is roughly the same as $p^0$ and the second case means that $p$ is roughly the same as $p_c$. We fix a constant $\varepsilon_1\in(0,1)$ to be chosen later. The choice of $\varepsilon_1$ does not depend on $s$.

\emph{Case 1: } $L_1(s)a^N_{is}(p,\ptilde)\geq \varepsilon_1 \abs{s}\int_{\partial\Omega}\abs{\Gamma p_c}^2 dS$. We first show that in this case the Neumann form dominates the form $a_{is}$ for $\abs{s}$ big enough in the following sense:
\begin{align*}
 \abs{a_{is}(p,\ptilde)-a^N_{is}(p,\ptilde)} &= \abs{s\khat(is)\int_{\partial\Omega}(\Gamma p_+ + \Gamma p_-)\overline{(\Gamma p_+ - \Gamma p_-)} dS}\\
 &\leq 2\abs{s\khat(is)} \int_{\partial\Omega} \abs{\Gamma p_+}^2 + \abs{\Gamma p_-}^2 dS \\
 &\leq C \abs{s\khat(is)} \varepsilon_1^{-1} L_1(s) \frac{a^N_{is}(p,\ptilde)}{\abs{s}} \\
 &\leq \frac{1}{2} a^N_{is}(p,\ptilde) .
\end{align*}
From the second to the third line we used the assumption of case 1 and (\ref{eq: auxiliary estimate in main proof}). By (\ref{eq: additional assumptions}) the last line is valid for all $s\geq s_0$, where $s_0$ is sufficiently large depending on how small $\varepsilon_1$ is. Therefore we have
\begin{align*}
 \abs{a_{is}(p,\ptilde)} &\geq (\frac{1}{4} + \frac{1}{4}) a^N_{is}(p,\ptilde) \\
 &\geq \frac{s\delta}{4}\norm{p^0}_{L^2(\Omega)}^2 + \frac{\varepsilon_1\abs{s}}{4L_1(s)}\int_{\partial\Omega}\abs{\Gamma p_c}^2 dS \\
 &\geq \frac{c\varepsilon_1 \abs{s}}{L_1(s)} \left(\norm{p^0}_{L^2(\Omega)}^2 + \norm{p_c}_{L^2(\Omega)}^2\right) \\
 &\geq c\varepsilon_1 \abs{s}\Re\khat(is)\norm{p}_{L^2(\Omega)}^2 .
\end{align*}
From the second to the third line we used the lower estimate (\ref{eq: upper and lower estimate for Neumann eigenfunctions}) and in the last step we used our assumptions on the acoustic impedance (\ref{eq: additional assumptions}). The theorem is proved for case 1.

\emph{Case 2: } $L_1(s)a^N_{is}(p,\ptilde) < \varepsilon_1 \abs{s}\int_{\partial\Omega}\abs{\Gamma p_c}^2 dS$. By Lemma \ref{thm: auxiliary lemma 1} and $\lim_{\abs{s}\rightarrow\infty} L_1(s)=\infty$ this yields
\begin{align}\label{eq: case 2 preliminary estimate}
 \int_{\partial\Omega}\abs{\Gamma p_c}^2 dS \geq \norm{p^0}_{L^2(\Omega)}^2
\end{align}
for all $\abs{s}\geq s_1$ with an $s_1>0$ not depending on $\varepsilon_1$. We show now that in case 2 the form $a_{is}$ is dominated by the contribution from the boundary. By Lemma \ref{thm: auxiliary lemma 2} we have 
\begin{align*}
 \abs{\int_{\partial\Omega} p^0\overline{p_c} dS} 
 &\leq \left(CL(s)\frac{a^N_{is}(p,\ptilde)}{\abs{s}}\right)^{\frac{1}{2}} \left(\int_{\partial\Omega} \abs{p_c}^2 dS \right)^{\frac{1}{2}} \\
 &\leq C\sqrt{\varepsilon_1} \left(\frac{L(s)}{L_1(s)}\right)^{\frac{1}{2}} \int_{\partial\Omega} \abs{p_c}^2 dS \\
 &\leq \frac{\Re\khat(is)}{2\abs{\khat(is)}}\int_{\partial\Omega} \abs{p_c}^2 dS .
\end{align*}
In the last step we choose $\varepsilon_1$ so small that $C\sqrt{\varepsilon_1}\leq1/2$. Finally from this, (\ref{eq: case 2 preliminary estimate}) and the lower estimate (\ref{eq: upper and lower estimate for Neumann eigenfunctions}) we deduce that
\begin{align*}
 \Im a_{is}(p, p_c) &\geq \frac{1}{2}\abs{s}\Re\khat(is)\int_{\partial\Omega} \abs{p_c}^2 dS \\
 &\geq c \abs{s}\Re\khat(is) (\norm{p_c}_{L^2(\Omega)}^2 + \norm{p^0}_{L^2(\Omega)}^2) \\
 &= c \abs{s}\Re\khat(is) \norm{p}_{L^2(\Omega)}^2
\end{align*}
which yields the claimed result.

\section{Examples}\label{sec: examples}
To illustrate our main results, Theorem \ref{thm: Preliminary estimates} and Theorem \ref{thm: upper resolvent estimate}, we want to consider special \emph{standard kernels} $k=k_{\beta,\varepsilon}$ (with $\varepsilon>0$ and $0<\beta<1$) introduced below. These standard kernels have the property that $\Re\khat(is)\approx |\khat(is)|\approx \abs{s}^{\beta-1}$ for large $\abs{s}$. This makes it easy to check whether (\ref{eq: additional assumptions}) is satisfied or not. We take a closer look at $\Omega$ being a square or a disk. In the case of the disk we show the optimality of the resolvent estimate, that is we show that $\norm{(is+\A)^{-1}}$ is not only bounded from above by a constant times $1/\Re\khat(is)$ but also from below. The standard kernels are designed in such a way that $\A$ is invertible (i.e. $(\tau\mapsto\tau^{-1})\in L^2_{\nu}$; see Theorem \ref{thm: Preliminary estimates}). We have assumed this for the simplicity of exposition. However, in Subsection \ref{sec: Example singularity at 0} we briefly show that our results yield (optimal) decay rates also in the presence of a singularity at zero.

The case $\Omega=(0,1)$ is treated separately in Section \ref{sec: 1D case}.

\subsection{Properties of the standard kernels}\label{sec: examples kernel}
For $\varepsilon>0$ and $0<\beta<1$ let
\begin{equation}\nonumber
 k_{\beta, \varepsilon}(t) = e^{-\varepsilon t}t^{-(1-\beta)} \text{ for } t>0 .
\end{equation}
To keep the notation short we fix $\varepsilon$ and $\beta$ now and write $k$ instead of $k_{\beta, \varepsilon}$ throughout this section. Obviously $k\in L^1(0,\infty)$ and for all $n\in\N_0$ we have $(-1)^n d^nk/dt^n(t) > 0$. The last property is a characterization of completely monotonic functions. Thus the kernel $k$ is admissible in the sense that the semigroup from Section \ref{sec: Semigroup approach} is defined. 

Let $\Gamma$ denote the Gamma function. Taking Laplace transform yields for $z>-\varepsilon$
\begin{align*}
 \khat(z) = \int_0^{\infty} e^{-(\varepsilon+z)t}t^{-(1-\beta)} dt 
 = \frac{1}{(\varepsilon+z)^{\beta}} \int_0^{\infty} s^{-(1-\beta)} e^{-s} ds 
 = \frac{\Gamma(\beta)}{(\varepsilon+z)^{\beta}} .
\end{align*}
By analyticity the equality between the left end and the right end of this chain of equations extends to $\C\backslash(-\infty,-\varepsilon]$. 

For $s\in\R$, let $\varphi(s)\in(-\frac{\pi}{2},\frac{\pi}{2})$ be the argument of $\varepsilon-is$. Note that $\varphi(s)\rightarrow\mp\frac{\pi}{2}$ as $s\rightarrow\pm\infty$. Then we have
\begin{equation}\nonumber
 \khat(is) = \Gamma(\beta)\abs{\frac{\varepsilon-is}{\varepsilon^2+s^2}}^{\beta}\left(\cos(\beta\varphi(s)) + i\sin(\beta\varphi(s))\right) .
\end{equation}
In particular
\begin{equation}\nonumber
 \Re\khat(is) \approx \abs{\Im\khat(is)} \approx \frac{1}{\abs{s}^{\beta}} \text{ for } \abs{s}\geq 1 .
\end{equation}
Here by $\approx$ we mean that the left-hand side is up to a constant, which does not depend on $s$, an upper bound for the right-hand side and vice versa. The first $\approx$-relation implies that the condition (\ref{eq: additional assumptions}) is equivalent to the simpler estimate $\Re\khat(is)=o(1/L(s))$ as $\abs{s}$ tends to infinity. More precisely we have
\begin{equation}\label{eq: standard additional assumptions}
 (\ref{eq: additional assumptions}) \Leftrightarrow \beta > \alpha .
\end{equation}

It is well known that for $z>0$ and $\beta\in(0,1)$
\begin{equation}\nonumber
 z^{-\beta} = \frac{\sin(\pi \beta)}{\pi} \int_0^{\infty} \frac{1}{\tau+z} \frac{d\tau}{\tau^{\beta}} .
\end{equation}
Thus
\begin{align*}
 \khat(z) = \frac{\sin(\beta\pi)}{\pi\Gamma(\beta)} \int_{\varepsilon}^{\infty} \frac{1}{\tau+z} \frac{d\tau}{(\tau-\varepsilon)^{\beta}} .
\end{align*}
In the notation of Section \ref{sec: Introduction} this means
\begin{equation}\nonumber
 d\nu(\tau) = \frac{\sin(\beta\pi)}{\pi\Gamma(\beta)}\cdot \frac{1_{[\varepsilon,\infty)}}{(\tau-\varepsilon)^{\beta}} d\tau .
\end{equation}
By Theorem \ref{thm: Preliminary estimates} (iii) we see that $\A$ is invertible.

\subsection{Smooth domains}
Let us suppose that $\Omega$ has a $C^{\infty}$ boundary and let $k=k_{\beta,\varepsilon}$ for some $\varepsilon>0$ and $0<\beta<1$. By \cite{BarnettHassellTacy2016} we know that (\ref{eq: upper and lower estimate for Neumann eigenfunctions}) is satisfied for $\alpha=2/3$. Thus by (\ref{eq: standard additional assumptions}) and Theorem \ref{thm: upper resolvent estimate} we have 
\begin{equation}
 \beta > \frac{2}{3} \quad \Longrightarrow \quad \forall s\in\R: \norm{(is+\A)^{-1}}\leq C (1+\abs{s})^{\beta}.
\end{equation}
By Theorem \ref{thm: Borichev-Tomilov} this implies
\begin{Proposition}\label{thm: arbitrary smooth domains}
 Let $\partial\Omega$ be of class $C^{\infty}$ and $k=k_{\beta,\varepsilon}$. If $\beta>2/3$ then, for all $t>0$ and $\xnice_0\in\Ho_1$,
 \begin{equation}\nonumber
  E(t, \xnice_0) \leq C t^{-\frac{2}{\beta}} E_1(\xnice_0).
 \end{equation}
\end{Proposition}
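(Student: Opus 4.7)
The plan is to chain together the results already developed in the paper, since Proposition \ref{thm: arbitrary smooth domains} is a direct corollary once the hypotheses of Theorem \ref{thm: upper resolvent estimate} are verified. First I would check its applicability in the present setting: for $\partial\Omega$ of class $C^{\infty}$, the Barnett--Hassell--Tacy bound cited in Section \ref{sec: upper resolvent estimate} gives the spectral cluster estimate (\ref{eq: upper and lower estimate for Neumann eigenfunctions}) with $\alpha=2/3$, and for the standard kernel $k_{\beta,\varepsilon}$ the equivalence (\ref{eq: standard additional assumptions}) reduces the technical assumption (\ref{eq: additional assumptions}) to $\beta>\alpha$, which is precisely our hypothesis $\beta>2/3$. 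Theorem \ref{thm: upper resolvent estimate} then yields $\norm{R(is)}_{L^2\to L^2}\leq C s^{\beta-1}$ for $s\geq 1$.

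Next I would transfer this stationary bound to the semigroup level. Theorem \ref{thm: Preliminary estimates}(i), applied with $M(\abs{s})=\abs{s}^{\beta}$, gives $\norm{(is+\A)^{-1}}\leq C\abs{s}^{\beta}$ for large $\abs{s}$. Near the origin Theorem \ref{thm: Preliminary estimates}(ii) yields $\norm{(is+\A)^{-1}}\leq C\abs{s}^{-1}$, and since for the standard kernel the measure $\nu$ is supported in $[\varepsilon,\infty)$ we have $(\tau\mapsto\tau^{-1})\in L^{\infty}_{\nu}$, so Theorem \ref{thm: Preliminary estimates}(iii) ensures that $\A$ is invertible and $0\in\rho(-\A)$. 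For the compact middle band, since $\khat(is)=\Gamma(\beta)(\varepsilon+is)^{-\beta}$ has argument in $(-\beta\pi/2,\beta\pi/2)$, one has $\Re\khat(is)>0$ for every real $s$, so $a_{is}$ is coercive on $H^1$ modulo a compact perturbation and $R(is)$ exists throughout $\R$; combined with Theorem \ref{thm: spectrum DFMP} and continuity of the resolvent on compacta, this delivers a uniform polynomial bound $\norm{(is+\A)^{-1}}\leq C(1+\abs{s})^{\beta}$ for every $s\in\R$.

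Finally I would feed this bound into Theorem \ref{thm: Borichev-Tomilov}, which converts a polynomial resolvent estimate of order $\beta$ on a bounded Hilbert space semigroup with $i\R\subset\rho(-\A)$ into the decay $\norm{T(t)\xnice_0}_{\Ho}\leq Ct^{-1/\beta}\norm{\xnice_0}_{D(\A)}$ for every $\xnice_0\in D(\A)=\Ho_1$. Squaring and using $E(t,\xnice_0)=\norm{T(t)\xnice_0}_{\Ho}^2$ together with $E_1(\xnice_0)=\norm{\xnice_0}_{D(\A)}^2$ yields the claimed rate $E(t,\xnice_0)\leq Ct^{-2/\beta}E_1(\xnice_0)$. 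I do not expect any genuine obstacle here; the proof is essentially bookkeeping, and the only point deserving a moment's attention is the doubling of the decay exponent, which arises from the fact that $E$ is the square of the $\Ho$-norm while $E_1$ is the square of the graph norm of $\A$.
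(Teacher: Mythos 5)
Your argument is correct and follows the same chain the paper uses: verify $\alpha=2/3$ via \cite{BarnettHassellTacy2016}, reduce hypothesis (\ref{eq: additional assumptions}) to $\beta>\alpha$ via (\ref{eq: standard additional assumptions}), apply Theorem \ref{thm: upper resolvent estimate} and Theorem \ref{thm: Preliminary estimates}(i) to get $\norm{(is+\A)^{-1}}=O(\abs{s}^\beta)$, and then invoke Theorem \ref{thm: Borichev-Tomilov}. The added care you take in verifying the bound on the whole imaginary axis --- invertibility of $\A$ via Theorem \ref{thm: Preliminary estimates}(iii), absence of other imaginary spectrum via Theorem \ref{thm: spectrum DFMP}, continuity on compacta --- spells out a step the paper leaves implicit; it is a useful expansion rather than a different route.
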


\subsection{The disk}\label{sec: examples disk}
Let $\Omega=D$ be the unit disk in $\R^2$. The smallest possible choice of $\alpha$ in (\ref{eq: upper and lower estimate for Neumann eigenfunctions}) is indeed $2/3$. The simple proof is based on a \emph{Rellich-type identity}, see for instance \cite[page 5]{BarnettHassellTacy2016}. So the circle already realizes the ``worsed case scenario'' with respect to the upper bounds for Neumann eigenfunctions. Thus in Proposition \ref{thm: arbitrary smooth domains} we cannot replace the condition $\beta>2/3$ by a weaker one. Instead we  show the optimality of the upper bound for the energy decay. Therefore we investigate the spectrum of $\A$.
\begin{Lemma}\label{thm: spectrum on disk}
 Let $\Omega=D$ and $k=k_{\beta,\varepsilon}$. Then there exists a sequence $(z_n)$ in the spectrum of $-\A$ such that $(\Im z_n)$ is positive and increasing and such that there exists a constant $C>0$ such that
 \begin{equation}\nonumber
   0 < -\Re z_n \leq \frac{C}{(\Im z_n)^{\beta}}
 \end{equation}
 holds for all $n$.
\end{Lemma}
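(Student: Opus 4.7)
The plan is to reduce the spectral problem on the disk by separation of variables and then to perturb off the Neumann spectrum. In polar coordinates $(r,\theta)$ any nontrivial solution of the homogeneous stationary system $z^2 u = \Delta u$, $\partial_n u + z\khat(z)u = 0$ of angular mode $n$ has the form $u(r,\theta) = I_n(zr)\,e^{in\theta}$ with $I_n$ the (entire) modified Bessel function of order $n$; inserting this into the boundary condition at $r=1$ and dividing by $z\neq 0$ yields the dispersion relation
\begin{equation*}
F_n(z) := I_n'(z) + \khat(z)\, I_n(z) = 0.
\end{equation*}
By Theorem~\ref{thm: spectrum DFMP} every zero $z\in\C\setminus(-\infty,0]$ of some $F_n$ lies in $\sigma(-\A)$, so it suffices to produce such zeros with the required real-part control.

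I would specialise to the simplest case $n=0$. Using the standard relations $I_0(iy) = J_0(y)$, $I_0'(iy) = -i J_0'(y)$, $I_0''(iy) = -J_0''(y)$ together with Bessel's equation $J_0'' + y^{-1}J_0' + J_0 = 0$, one evaluates $F_0$ and $F_0'$ at the unperturbed point $z_k^0 := ij'_{0,k}$, where $j'_{0,k}$ is the $k$-th positive zero of $J_0'$, obtaining
\begin{align*}
F_0(z_k^0) &= \khat(z_k^0)\, J_0(j'_{0,k}),\\
F_0'(z_k^0) &= \bigl(1+\khat'(z_k^0)\bigr) J_0(j'_{0,k}).
\end{align*}
The explicit formula $\khat(z) = \Gamma(\beta)(\varepsilon+z)^{-\beta}$ from Subsection~\ref{sec: examples kernel} gives $\khat'(is) = O(s^{-\beta-1})$ and $|\khat(is)|\asymp s^{-\beta}$ as $s\to\infty$, while $J_0(j'_{0,k})\neq 0$.

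Next I would apply a standard perturbation argument, e.g.\ the holomorphic implicit function theorem, or a Rouch\'e argument on a disk of radius $\eta_k$ around $z_k^0$ with $|F_0(z_k^0)/F_0'(z_k^0)|\ll \eta_k \ll 1$ and uniform-in-$k$ bounds on $F_0''$ near $z_k^0$ obtained from Bessel asymptotics. This yields, for all sufficiently large $k$, a unique zero $z_k$ of $F_0$ near $z_k^0$ satisfying
\begin{equation*}
z_k - z_k^0 = -\khat(z_k^0)\bigl(1+o(1)\bigr)\quad (k\to\infty).
\end{equation*}
In particular $\Im z_k = j'_{0,k}(1+o(1))\to\infty$, and the asymptotics $\khat(is)=\Gamma(\beta)(is)^{-\beta}(1+o(1))$ with $\cos(\beta\pi/2)>0$ yield
\begin{equation*}
-\Re z_k = \Re\khat(z_k^0)\bigl(1+o(1)\bigr) \asymp (j'_{0,k})^{-\beta} \asymp (\Im z_k)^{-\beta},
\end{equation*}
which is the desired estimate; passing to a subsequence makes $(\Im z_k)$ strictly increasing.

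The main obstacle is making the perturbation step rigorous. One must ensure that the remainder in the Taylor expansion of $F_0$ about $z_k^0$ is genuinely $o\bigl(|F_0(z_k^0)/F_0'(z_k^0)|\bigr)$ uniformly in $k$, which reduces to controlling $F_0''$ (hence $J_0''$ and $\khat''$) on a neighbourhood of $z_k^0$. The classical large-argument asymptotics of $J_0$ and the explicit polynomial decay of $\khat$ and its derivatives make this routine once the leading-order computation above is in place.
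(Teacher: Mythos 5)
Your proof follows essentially the same route as the paper: separation of variables reduces the eigenvalue problem to the same Bessel-type characteristic equation, and a Rouch\'e/implicit-function perturbation around the Neumann eigenfrequencies yields $z_k - z_k^0 = -(1+o(1))\khat(z_k^0)$, from which the stated bound follows since $\Re\khat(is) \asymp |s|^{-\beta}$. The only notable difference is that you center the perturbation at the exact zeros $j'_{0,k}$ of $J_0'$ and evaluate $F_0$ and $F_0'$ there exactly via the Bessel ODE, whereas the paper centers at the asymptotic approximations $s_n = n\pi + (2l+1)\pi/4$ and uses large-argument asymptotics throughout; your variant makes the leading-order bookkeeping slightly cleaner, though both still need the asymptotics to control the remainder.
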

As a corollary we have
\begin{equation}\nonumber
  \forall s>0: \sup_{\abs{\sigma}\leq s} \norm{(i\sigma+\A)^{-1}} \geq C (1+s)^{\beta}.
\end{equation}
By Theorem \ref{thm: Borichev-Tomilov} and the remark after Theorem \ref{thm: Batty-Duyckaerts} this implies
\begin{Proposition}
 Let $\Omega=D$ and $k=k_{\beta,\varepsilon}$. If $\beta>2/3$ then we have for all $t\geq1$ that
 \begin{equation}\nonumber
  c t^{-\frac{2}{\beta}} \leq \sup_{E_1(\xnice_0)\leq 1} E(t, \xnice_0) \leq C t^{-\frac{2}{\beta}} .
 \end{equation}
 If $\beta$ is arbitrary the left inequality remains valid.
\end{Proposition}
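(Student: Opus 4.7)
The plan is to derive both bounds from the resolvent–energy correspondence recorded in the appendix theorems, with the upper bound being essentially the previous proposition and the lower bound being the substantive new content of this statement.

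For the upper bound when $\beta > 2/3$: since $\Omega = D$ has smooth boundary, Proposition \ref{thm: arbitrary smooth domains} applies verbatim and gives $E(t,\xnice_0) \leq C t^{-2/\beta} E_1(\xnice_0)$. No further work is needed here; the role of the hypothesis $\beta > 2/3$ is exactly to ensure the condition (\ref{eq: additional assumptions}) holds via (\ref{eq: standard additional assumptions}), since the disk saturates (\ref{eq: upper and lower estimate for Neumann eigenfunctions}) at $\alpha = 2/3$.

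For the lower bound, I would combine Lemma \ref{thm: spectrum on disk} with the converse (lower-bound) direction of the Batty–Duyckaerts/Borichev–Tomilov machinery alluded to after Theorem \ref{thm: Batty-Duyckaerts}. The sequence $(z_n) \subset \sigma(-\A)$ produced by the lemma satisfies $\Im z_n \to \infty$ and $-\Re z_n \leq C(\Im z_n)^{-\beta}$. Standard functional calculus/semigroup reasoning (if $z_n \in \sigma(-\A)$ lies within distance $\eta_n = C(\Im z_n)^{-\beta}$ of $i\mathbb{R}$, then by the Neumann series one has $\norm{(i\sigma_n + \A)^{-1}} \geq \eta_n^{-1}$ at $\sigma_n = \Im z_n$) yields the corollary quoted in the excerpt: $\sup_{|\sigma| \leq s}\norm{(i\sigma + \A)^{-1}} \geq c (1 + s)^{\beta}$ for a sequence $s \to \infty$. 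The converse part of Theorem \ref{thm: Borichev-Tomilov} (polynomial resolvent growth of order $s^{\beta}$ forces the semigroup-on-$D(\A)$ decay to be no faster than $t^{-1/\beta}$) then translates this into the existence, for each $t \geq 1$, of data $\xnice_0 \in \Ho_1$ with $E_1(\xnice_0) \leq 1$ and $E(t,\xnice_0) \geq c t^{-2/\beta}$, which is the claimed left inequality.

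Note that Lemma \ref{thm: spectrum on disk} is stated for all $\beta \in (0,1)$ without the restriction $\beta > 2/3$; consequently the lower-bound argument in the previous paragraph goes through unchanged in the regime $\beta \leq 2/3$ too, giving the final sentence of the proposition (only the matching upper bound is missing in that regime).

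The main obstacle I anticipate is invoking the converse direction of Borichev–Tomilov in a form that actually produces a lower bound on $\sup_{E_1(\xnice_0)\leq 1} E(t,\xnice_0)$ rather than merely a statement about the \emph{non-validity} of a faster uniform decay; this is where the remark after Theorem \ref{thm: Batty-Duyckaerts} is used, and one has to check that the near-spectral-point construction produces genuine quasi-modes for $\A$ (with $E_1$ under control) rather than just resolvent norm blow-up. Given that $\A$ is invertible here (by Section \ref{sec: examples kernel}, since $\tau \mapsto \tau^{-1} \in L^\infty_\nu$ because $\nu$ is supported in $[\varepsilon,\infty)$), the identification $\norm{T(t)|_{D(\A)\to\Ho}} \asymp \norm{T(t)\A^{-1}}$ is clean, and the squaring from norm to energy produces the exponent $2/\beta$.
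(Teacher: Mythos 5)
Your proposal is correct and follows essentially the same route as the paper: the upper bound is Proposition \ref{thm: arbitrary smooth domains} (i.e.\ Theorem \ref{thm: Borichev-Tomilov} applied via $\alpha=2/3$ and (\ref{eq: standard additional assumptions})), and the lower bound comes from the spectral sequence of Lemma \ref{thm: spectrum on disk} plus the remark after Theorem \ref{thm: Batty-Duyckaerts}, with the observation that the latter applies for all $\beta\in(0,1)$. One small remark on phrasing: the ``converse'' you invoke is, as stated in this paper, the remark after Theorem \ref{thm: Batty-Duyckaerts}, not a converse part of Theorem \ref{thm: Borichev-Tomilov} (the latter is stated here only in one direction); you do cite the remark correctly, and your worry about needing explicit quasi-modes is unnecessary since the contrapositive of that remark, together with the monotonicity of $t\mapsto\|T(t)\|_{D(\A)\to\Ho}$ and the fact that the gaps between consecutive $\Im z_n$ are bounded, already yields the pointwise lower bound $\sup_{E_1(\xnice_0)\le1}E(t,\xnice_0)\ge ct^{-2/\beta}$ for all $t\ge1$.
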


\begin{proof}[Proof of Lemma \ref{thm: spectrum on disk}]
 Except for the rate of convergence of $(z_n)$ towards the imaginary axis the content of our lemma is included in \cite[Theorem 5.2]{DFMP2010b}. Therefore we only sketch the existence of a sequence $(z_n)$ with imaginary part tending to infinity and real part tending to zero. 
 
 First recall that an eigenvalue is a complex number $z_n$ such that (\ref{eq: stationary wave equation FA}) with $F=0$ and $z=z_n$ has a non-zero solution $u$. After a transformation to polar coordinates, by a separation of variables argument one can show that the existence of $u$ is equivalent to the existence of a non-zero solution $v$ of 
 \begin{align*}
  \begin{cases}
   v''(r) + \frac{1}{r}v'(r) - (\frac{l^2}{r^2}+z^2)v(r) = 0 & (0 < r < 1), \\
   v'(1) + z\khat(z) v(1) = 0, & \\
   v(0+) \text{ is finite},
  \end{cases}
 \end{align*}
 for some $l\in\N_0$. The first and the third line forces that $v(r)$ is proportional to $J_l(izr)$, where $J_l$ is the $l$-th order Bessel function of the first kind (see e.g. \cite[Chapter 9]{AbramowitzStegun}). Therefore the second line implies
 \begin{equation}\label{eq: characteristic equation disk}
  \frac{J'_l(iz)}{J_l(iz)} = i\khat(z) .
 \end{equation}
 We have seen that a complex number $z_n\notin (-\infty, 0]$ is an eigenvalue of the wave operator if and only if it is a zero of (\ref{eq: characteristic equation disk}) for some $l$. Let us fix $l$ now. Following the approach of \cite{DFMP2010b} one can prove the existence of a sequence of zeros $(z_n)=(i s_n - \xi_n)$ with $s_n=n\pi + (2l+1)\pi/4$, $\Re\xi_n>0$ and $\xi_n$ tending to zero, by a Rouch\'e argument.
 
 It remains to prove that $\xi_n=O((\Im z_n)^{-(1-\beta)})$. By \cite[Formula 9.2.1]{AbramowitzStegun} the following asymptotic formula holds if $z$ tends to infinity while $\Re z$ stays bounded (and $l$ is fixed):
 \begin{equation}\label{eq: asymptotic expansion Bessel function}
  J_l(iz) = \sqrt{\frac{2}{\pi z}} \cos\left(iz - \frac{(2l+1)}{4}\pi\right) + O(\abs{z}^{-1}) .
 \end{equation}
 A naive way to get the corresponding asympotic formula for $J'$ and $J''$ would be to take derivatives of the cosine. In fact this yields the correct leading term. The error term is again $O(\abs{z}^{-1})$ in both cases. For the first derivative this is \cite[Formula 9.2.11]{AbramowitzStegun}. The formula for the second derivative then follows from the ordinary differential equation satisfied by $J_l$.
 
 Thus by a Taylor expansion of (\ref{eq: characteristic equation disk}) we get:
 \begin{equation}\nonumber
  0 + i\xi_n + O(\abs{\xi_n}^2+n^{-1}) = i\khat(is_n) - i\xi_n\khat'(is_n) + O(\abs{\xi_n}^2+n^{-1}) .
 \end{equation}
 This implies
 \begin{align}\label{eq: xi vs acoustic impedance}
  \xi_n &= (1+o(1))\khat(is_n) \\ \nonumber
  &= (1+o(1))\frac{\Gamma(\beta)}{s_n^{\beta}}\left(\cos(\beta\varphi(s_n)) + i\sin(\beta\varphi(s_n))\right).
 \end{align}
 Here $\varphi(s)$ is the argument of $\varepsilon-is$ (see Section \ref{sec: examples kernel}).
\end{proof}
Note that in the undamped case $k=0$ we have $z_n^{0}=s_n+O(s_n^{-1})$ by \cite[Formula 9.5.12]{AbramowitzStegun} for the eigenvalues $z_n^0$. Here again $s_n=n\pi + (2l+1)\pi/4$ and $l$ is fixed. Thus (\ref{eq: xi vs acoustic impedance}) implies that $z_n=z_n^0-(1+o(1))\khat(is_n)$.

\subsection{The square}
Let $\Omega=Q=(0,\pi)^2$ be a square. In terms of upper bounds for boundary values of spectral clusters the square behaves slightly better than the disk. It seems to be reasonable to believe that this is due to the fact that the square has no \emph{whispering gallery modes}.
\begin{Lemma}\label{thm: upper bound for square}
 Let $\Omega=Q$, $k=k_{\beta,\varepsilon}$ and $\delta>0$. If $\delta$ is sufficiently small then for each $L^2(Q)$-normalized spectral cluster $p$ of width $\delta$ of the Neumann-Laplace operator
 \begin{equation}\nonumber
  c \leq \int_{\partial\Omega} |\Gamma p|^2 dS \leq C s(p)^{\frac{1}{2}}.
 \end{equation}
 The constants $c,C>0$ do not depend on $p$. Furthermore the exponent $\alpha(Q)=1/2$ is optimal, i.e. one cannot replace it by a smaller one. 
\end{Lemma}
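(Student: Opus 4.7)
The plan is to exploit the explicit product structure of the Neumann eigenfunctions on $Q=(0,\pi)^2$: every eigenfunction factors (up to normalization) as $\cos(mx)\cos(ny)$ with eigenvalue $m^2+n^2$, so that every $L^2(Q)$-normalized spectral cluster of width $\delta$ admits an expansion
\[ p(x,y) = \sum_{(m,n)\in A_s} b_{mn}\cos(mx)\cos(ny), \qquad A_s := \{(m,n)\in\N_0\times\N_0 : m^2+n^2 \in [(s-\delta)^2,(s+\delta)^2]\}, \]
where $s\approx s(p)$ is the mean frequency. I will prove the lower bound, the upper bound, and the optimality of $\alpha(Q)=1/2$ separately.

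For the upper bound I restrict $p$ to a side, say $\{y=0\}$, where it becomes the one-dimensional Fourier cosine series $p(x,0) = \sum_{m\geq 0}C_m\cos(mx)$ with $C_m=\sum_{n:(m,n)\in A_s} b_{mn}$. Orthogonality gives $\int_0^\pi \abs{p(x,0)}^2\,dx\lesssim \sum_m \abs{C_m}^2$, and Cauchy--Schwarz bounds $\abs{C_m}^2 \leq N_m \sum_n \abs{b_{mn}}^2$ with $N_m = \#\{n:(m,n)\in A_s\}$. An elementary lattice-point count in the annulus shows $\max_m N_m \leq C(\delta)\, s^{1/2}$; the worst case is $m\approx s$, where $n$ varies in an interval of length $\sim \sqrt{s\delta}$. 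Summing the resulting bound over the four sides (by symmetry) yields the desired upper estimate.

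For the lower bound I will use a Rellich-type identity with the vector field $\vec X(x,y) = (x-\pi/2,\,y-\pi/2)$, which satisfies $\vec X\cdot n \equiv \pi/2$ on $\partial Q$. Setting $g := (-\Delta - s^2)p$ (so $\norm{g}_{L^2(Q)} \leq Cs\delta\norm{p}_{L^2(Q)}$ by the cluster hypothesis), I will multiply the identity $-\Delta p = s^2 p + g$ by $\overline{\vec X\cdot\nabla p}$, integrate by parts twice, and use $\partial_n p = 0$ on $\partial Q$ to obtain
\[ \frac{\pi s^2}{4}\int_{\partial Q}\abs{\Gamma p}^2\, dS = s^2\norm{p}_{L^2(Q)}^2 + \frac{\pi}{4}\int_{\partial Q}\abs{\nabla p}^2\, dS - \Re\int_Q g\,\overline{\vec X\cdot\nabla p}. \]
Since $\norm{\nabla p}_{L^2(Q)}\leq Cs\norm{p}_{L^2(Q)}$, the error term is at most $Cs^2\delta\norm{p}^2$, which for small $\delta$ is absorbed by the bulk term $s^2\norm{p}^2$; this yields $\int_{\partial Q}\abs{\Gamma p}^2\, dS\geq c\norm{p}_{L^2(Q)}^2$.

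For optimality of the exponent $1/2$, I will exhibit, for each positive integer $s$, the explicit cluster $p_s(x,y) = \cos(sx)\sum_{n=0}^{N}\cos(ny)$ with $N := \lfloor\sqrt{2s\delta}\rfloor$. The bound $s^2+n^2 \leq (s+\delta)^2$ for $n\leq N$ shows $p_s$ is a cluster of width $\leq \delta$. Direct integration gives $\norm{p_s}_{L^2(Q)}^2 \sim N$ and $\int_0^\pi \abs{p_s(x,0)}^2\, dx \sim N^2$, so the ratio $\int_{\partial Q}\abs{\Gamma p_s}^2\, dS / \norm{p_s}^2$ is of order $N\sim s^{1/2}$, ruling out any smaller exponent. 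The main obstacle will be the Rellich calculation: tracking the integration-by-parts boundary terms carefully and verifying that the error term is genuinely subdominant is what makes the choice $\vec X(x,y)=(x-\pi/2,y-\pi/2)$ critical, since this particular vector field renders $\vec X\cdot n$ a positive constant on all four sides of $\partial Q$.
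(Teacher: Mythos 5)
Your upper bound argument and your optimality example are essentially the paper's: the upper bound is proved by restricting to one side, using orthogonality of the cosine system and Cauchy--Schwarz in the summation direction transverse to the side, and bounding the number of lattice points in the annulus with a fixed coordinate by $O(\sqrt{s})$; and the optimality example is a cluster supported on a single row $n=n_1$ spread over $\sim\sqrt{s}$ values of $m$ (the paper normalizes to $a_m=1/\sqrt{N}$, you keep $a_m=1$ and compute a ratio, which is the same content). Both are correct.

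Where you genuinely diverge from the paper is the \emph{lower bound}. The paper proves it by a direct Fourier argument: after restricting to indices $(m,n)\in I_1$ with $m\geq n$ (which carry at least half the $L^2$-mass), it observes that if $\delta$ is small then for each fixed $n$ the set of $m$ with $m\geq n$ and $s^2\leq m^2+n^2\leq(s+\delta)^2$ contains at most one element, so no cross terms arise and the trace norm on a single side already dominates a fixed multiple of $\norm{p}_{L^2(Q)}^2$. You instead run a Rellich--Pohozaev identity with the vector field $\vec{X}(x,y)=(x-\pi/2,y-\pi/2)$, for which $\vec{X}\cdot n\equiv\pi/2$ on $\partial Q$, and absorb the residual $g=(-\Delta-s^2)p$ (of size $O(s\delta)\norm{p}$) into the bulk term $s^2\norm{p}^2$. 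Your identity and the error estimate are correct, and the argument closes. The trade-off: the Rellich route is more robust and domain-general (it is what the cited reference uses for the disk, and works on any star-shaped domain given only the smallness of $g$), while the paper's direct Fourier argument is more elementary, gives explicit constants, and is really a one-line observation specific to the square. One small point worth keeping in mind if you write this up in full: when $p$ is complex-valued you should take the real part of the identity, and the boundary gradient appearing there is the tangential gradient $\nabla_T p$ (which coincides with $\nabla p$ on $\partial Q$ by the Neumann condition) --- both are nonnegative contributions and can be dropped, so the conclusion is unaffected.
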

The optimality assertion of Lemma \ref{thm: upper bound for square} may be somewhat surprising. If $p$ was restricted to be a (pure) eigenfunction of the Neumann-Laplace operator the optimal exponent would be $\alpha=0$. This is a direct consequence of the explicit formula available for the eigenfunctions. However, it will be clear from the proof why spectral clusters behave differently.

As in the preceding examples the lemma implies
\begin{Proposition}\label{thm: square}
 Let $\Omega=Q$, $k=k_{\beta,\varepsilon}$. If $\beta>1/2$ then, for all $t>0$ and $\xnice_0\in\Ho_1$,
 \begin{equation}\nonumber
  E(t, \xnice_0) \leq C t^{-\frac{2}{\beta}} E_1(\xnice_0).
 \end{equation}
\end{Proposition}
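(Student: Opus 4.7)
The proof mirrors the structure of Proposition \ref{thm: arbitrary smooth domains}: we assemble the boundary spectral-cluster estimate for the square with the general resolvent machinery and then feed the output into a standard Tauberian theorem. Since Lemma \ref{thm: upper bound for square} is assumed available, the task reduces to chaining together four results already established in the paper.

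First, I would use Lemma \ref{thm: upper bound for square} to assert that (\ref{eq: upper and lower estimate for Neumann eigenfunctions}) is valid on $Q$ with the choice $\alpha = 1/2$. Because the hypothesis of the proposition is $\beta > 1/2$, the equivalence (\ref{eq: standard additional assumptions}) from Section \ref{sec: examples kernel} immediately yields that the additional assumption (\ref{eq: additional assumptions}) on the acoustic impedance $\khat_{\beta,\varepsilon}$ is satisfied. Thus the hypotheses of Theorem \ref{thm: upper resolvent estimate} hold.

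Next, applying Theorem \ref{thm: upper resolvent estimate} gives the bound $\|R(is)\|_{L^2 \to L^2} \leq C/(s\,\Re\khat(is))$ for $s \geq 1$. Using the asymptotic $\Re\khat(is) \approx \abs{s}^{-\beta}$ derived in Section \ref{sec: examples kernel}, this rewrites as $\|R(is)\|_{L^2\to L^2} \leq C\abs{s}^{\beta-1}$ for large $\abs{s}$. This is precisely the right-hand side of Theorem \ref{thm: Preliminary estimates}(i) with $M(\abs{s}) = C\abs{s}^{\beta}$, so the equivalence there promotes this to the full resolvent estimate
\[
 \norm{(is+\A)^{-1}} \leq C\abs{s}^{\beta} \quad \text{for } \abs{s}\geq s_0.
\]
For the bounded range of $s$, recall that the standard kernel satisfies $\nu|_{(0,\varepsilon)} = 0$, so by Theorem \ref{thm: Preliminary estimates}(iii) the operator $\A$ is invertible; in particular $\|(is+\A)^{-1}\|$ stays bounded on $\{\abs{s}\leq s_0\}$. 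Combining the two regions gives the uniform estimate $\norm{(is+\A)^{-1}}\leq C(1+\abs{s})^{\beta}$ for all $s\in\R$.

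Finally, I would apply Theorem \ref{thm: Borichev-Tomilov} with the polynomial weight $M(s) = (1+s)^{\beta}$, which converts the resolvent bound into the energy decay $E(t,\xnice_0) \leq C\,t^{-2/\beta} E_1(\xnice_0)$ for all $\xnice_0 \in \Ho_1$. There is no substantial obstacle in this part of the argument — every step is a direct invocation of a previously proved result. The only genuine work required for the proposition lies upstream, in the proof of Lemma \ref{thm: upper bound for square}, namely the verification of the boundary estimate with exponent $\alpha = 1/2$ for spectral clusters on the square; but that lemma is already stated and its proof is handled separately.
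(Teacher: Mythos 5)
Your proof is correct and follows the same chain the paper uses (the paper compresses it to ``As in the preceding examples the lemma implies...'', referring back to the treatment of smooth domains, which is exactly the sequence Lemma \ref{thm: upper bound for square} $\Rightarrow$ (\ref{eq: standard additional assumptions}) $\Rightarrow$ Theorem \ref{thm: upper resolvent estimate} $\Rightarrow$ Theorem \ref{thm: Preliminary estimates}(i) $\Rightarrow$ Theorem \ref{thm: Borichev-Tomilov} that you spell out). The observation that $\A$ is invertible, needed to control the resolvent on the compact range of $s$, also matches the paper's remark in Subsection \ref{sec: examples kernel}.
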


\begin{proof}[Proof of Lemma \ref{thm: upper bound for square}]
 The explicit form of the normalized Neumann eigenfunctions $u_{m,n}$ and its eigenfrequencies $\lambda_{m,n}\geq0$ is
 \begin{align*}
  u_{m,n}(x,y) = 2\cos(mx)\cos(ny), \, \lambda_{m,n}^2 = m^2 + n^2.
 \end{align*}

 Let $p=\sum_{m,n} a_{n,m} u_{n,m}$ be a normalized spectral cluster of width $\delta$. We choose $s\geq0$ such that the set of indices $(m,n)$ with $a_{m,n}\neq 0$ is included in $I$ which is given by
 \begin{align*}
  I &= \{(m,n)\in\N_0^2; s^2\leq m^2 + n^2 \leq (s+\delta)^2\}, \\
  I_1 &= \{(m,n)\in I; m\geq n\}. 
 \end{align*}
 Without loss of generality we may assume that $\sum_{I_1}\abs{a_{m,n}}^2\geq 1/2$. We first prove the lower bound:
 \begin{align*}
  \int_{\partial\Omega} |\Gamma p|^2 dS  &= \sum_n \norm{\sum_m a_{m,n} \Gamma u_{m,n}}_{L^2(\partial\Omega)}^2 \\
  &\geq 16\pi\sum_{I_1} \abs{a_{m,n}}^2 \\
  &\geq 8\pi \norm{p}_{L^2(\Omega)}^2 .
 \end{align*}
In the first line we use the orthogonality relation for the cosine functions with respect to the $y$ variable. In the second line we use $\norm{u_{m,n}}_{L^2(\partial\Omega)}=4\sqrt{\pi}$ and the fact that the partial sum over $m$ in the preceding step includes only one member if $\delta$ is small and if the index set is restriced to $I_1$.

Let $N_n$ be the number of non-zero summands with respect to the inner sum in line one. It is not difficult to see that $N_n\leq C \sqrt{s}$ for a constant independent of $n$ and $s$. Therefore we have
 \begin{align*}
  \int_{\partial\Omega} |\Gamma p|^2 dS  &= \sum_n \norm{\sum_m a_{m,n} \Gamma u_{m,n}}_{L^2(\partial\Omega)}^2 \\
  &= \sum_n N_n^2 \norm{\frac{1}{N_n}\sum_m a_{m,n} \Gamma u_{m,n}}_{L^2(\partial\Omega)}^2 \\
  &\leq C \sum_{m,n} N_n \abs{a_{m,n}}^2 \\
  &\leq C s^{\frac{1}{2}} \norm{p}_{L^2(\Omega)}^2 .
 \end{align*}
 It remains to prove optimality of the exponent $\alpha=1/2$. For $n_1\in\N$ we consider a special spectral cluster $p_1$ of the form
 \begin{align*}
  p_1 = 2 \sum_{m=0}^{N-1} a_m \cos(m x)\cos(n_1 y) \\
  \text{where } N=N(n_1) = \lceil\varepsilon \sqrt{n_1}\rceil.
 \end{align*}
 If $\varepsilon>0$ is sufficiently small and $n_1$ large enough we see that $p_1$ is a spectral cluster of width $\delta$. If we set $a_m=1/\sqrt{N}$ we see that the $L^2(\Omega)$-norm of $p_1$ is $1$ and 
 \begin{align*}
  \int_{\{0\}\times (0,1)} \abs{\Gamma p_1}^2 dS = \abs{\sum_{m=1}^{N} a_m}^2 = N(n_1) \geq \varepsilon\sqrt{n_1} .
 \end{align*}
 This finishes the proof since $s(p_1)\in [n_1, n_1+\delta]$.
\end{proof}

\subsection{Decay in the presence of a singularity at zero}\label{sec: Example singularity at 0}
So far in this section we have excluded the case when $\A$ has a singularity at zero. The purpose of this subsection is to show that getting decay rates in this case is not more difficult than in the case where there is no singularity at zero. As in the previous subsection we simplify our presentation by considering a special family $(\khat'_{\alpha,\beta})_{\alpha,\beta}$ of acoustic impedances given by the measures
\begin{equation}\nonumber
   d\nu'_{\alpha,\beta} = \tau^{\alpha} d\tau|_{(0,1)} + (\tau-1)^{-\beta} d\tau|_{(1,\infty)} \quad (\alpha\in(0,\infty),\beta\in(0,1)).
  \end{equation}
  Obviously $(\tau\mapsto\tau^{-1})$ is integrable with respect to $\nu'_{\alpha,\beta}$ (thus $k'_{\alpha,\beta}$ is integrable) but it is not bounded with respect to that measure. Observe that $\alpha>1$ implies that $(\tau\mapsto\tau^{-1})$ is square integrable with respect to $\nu'$. In the following we assume for simplicity that $\alpha>1$. The reason is that by Theorem \ref{thm: range of A} the range of $\A$ has a simpler representation in this case.

  \begin{Lemma}\label{lem: visco kalphabeta}
   Let $\alpha\in(1,\infty),\beta\in(0,1)$. Then $(\tau\mapsto\tau^{-1})$ is integrable, square integrable but unbounded with respect to $\nu'_{\alpha,\beta}$. Moreover
   \begin{equation}\nonumber
    \khat'_{\alpha,\beta}(z) = \frac{\pi}{\sin(\beta\pi)} (1+z)^{-\beta} + O(\abs{z}^{-1})
   \end{equation}
   as $z$ tends to infinity avoiding $\R_-$.
  \end{Lemma}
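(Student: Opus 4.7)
The plan is to treat the three properties of $\tau \mapsto \tau^{-1}$ by direct inspection of $\nu'_{\alpha,\beta}$, and then extract the asymptotic for $\khat'_{\alpha,\beta}$ by isolating the singular contribution of the measure near $\tau = 1$.

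For the first statement I split each integral $\int_0^\infty \tau^{-q}\, d\nu'_{\alpha,\beta}$, $q \in \{1,2\}$, at $\tau = 1$. The contribution on $(1,\infty)$ becomes, after the substitution $\tau = 1 + s$, the integral $\int_0^\infty s^{-\beta}(1+s)^{-q}\, ds$, which is finite by $\beta < 1$ near $s = 0$ and by decay like $s^{-q-\beta}$ at infinity. The contribution on $(0,1)$ equals $\int_0^1 \tau^{\alpha-q}\, d\tau$, finite exactly when $\alpha > q - 1$. This explains why integrability of $\tau^{-1}$ needs only $\alpha > 0$, while square integrability uses the standing assumption $\alpha > 1$. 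Unboundedness in $L^\infty_\nu$ is then immediate, since $\nu'_{\alpha,\beta}((0,\varepsilon)) = \int_0^\varepsilon \tau^\alpha\, d\tau > 0$ for every $\varepsilon > 0$, so $\{\tau^{-1} > M\}$ has positive $\nu$-measure for every $M$.

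For the asymptotic I write $\khat'_{\alpha,\beta}(z) = \int_0^\infty (\tau+z)^{-1}\, d\nu'_{\alpha,\beta}(\tau) = I_1(z) + I_2(z)$ using the same splitting. The reverse triangle inequality gives $|\tau + z| \geq |z| - 1 \geq |z|/2$ for $\tau \in (0,1)$ and $|z| \geq 2$, so
\[
 |I_1(z)| \leq \frac{2}{|z|} \int_0^1 \tau^\alpha\, d\tau = O(|z|^{-1}).
\]
For $I_2$ the substitution $\tau = 1 + s$ recasts the integral as $\int_0^\infty s^{-\beta}/(s+(1+z))\, ds$, and the Stieltjes identity $w^{-\beta} = \frac{\sin(\beta\pi)}{\pi}\int_0^\infty s^{-\beta}/(s+w)\, ds$, used earlier in Section~\ref{sec: examples kernel} for $w > 0$, extends analytically to $w \in \C \setminus \R_-$. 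Applied with $w = 1 + z$ it produces $I_2(z) = \pi(1+z)^{-\beta}/\sin(\beta\pi)$, and summing the two pieces yields the claimed expansion.

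The only point that requires any care is the legitimacy of applying the Stieltjes identity at $w = 1 + z$, i.e., checking that $z \notin \R_-$ forces $1 + z \notin \R_-$. This is immediate: if $\Im z \neq 0$ then $\Im(1+z) \neq 0$, and if $\Im z = 0$ with $z \notin \R_-$ then $z \geq 0$, hence $1 + z \geq 1 > 0$. Everything else reduces to routine estimation of the two explicit integrals.
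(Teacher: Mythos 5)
Your proof is correct and follows essentially the same route as the paper: split $\khat'$ at $\tau=1$, bound the $(0,1)$ piece by $O(|z|^{-1})$ via the reverse triangle inequality, and identify the $(1,\infty)$ piece with $\frac{\pi}{\sin(\beta\pi)}(1+z)^{-\beta}$ through the Stieltjes identity (analytically continued to $\C\setminus\R_-$). You additionally spell out the elementary integrability/unboundedness checks and the legitimacy of evaluating the Stieltjes identity at $w=1+z$, details the paper leaves implicit.
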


  \begin{proof}
   We only have to prove the last statement. We calculate
   \begin{align*}
    \khat'(z) = \int_0^1 \frac{\tau^{\alpha}}{z+\tau} d\tau + \int_1^{\infty} \frac{1}{z+\tau} \frac{d\tau}{(\tau-1)^{\beta}} 
    =: I + II.
   \end{align*}
   It is easy to see that the modulus of $I$ is bounded by $(\abs{z}-1)^{-1}$ for all $z$ with $\abs{z} > 1$. With regard to $II$ we see that the well known identity 
   \begin{equation}\nonumber
    z^{-\beta} = \frac{\sin(\beta \pi)}{\pi} \int_0^{\infty} \frac{1}{z+\tau} \frac{d\tau}{\tau^{\beta}}
   \end{equation}
   finishes the proof.
  \end{proof}

  \begin{Proposition}\label{prop: visco singularity at infinity}
   Let $\alpha\in(0,\infty), \beta\in(2/3,1)$ and $k=k'_{\alpha,\beta}$. Let $\partial\Omega$ be a $C^\infty$-manifold. Then
   \begin{equation}\nonumber
    \norm{(is-\A)^{-1}} = O(\abs{s}^{\beta})
   \end{equation}
   as $\abs{s}>1$ tends to infinity.
  \end{Proposition}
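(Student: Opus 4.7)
The strategy is to feed Lemma \ref{lem: visco kalphabeta} into Theorem \ref{thm: upper resolvent estimate} and translate the resulting resolvent bound via Theorem \ref{thm: Preliminary estimates}(i). Since $\norm{(is-\A)^{-1}} = \norm{(i(-s)+\A)^{-1}}$ and $|s|=|-s|$, it suffices to bound $\norm{(i\sigma+\A)^{-1}}$ for $|\sigma|$ large.

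Writing $\khat$ for $\khat'_{\alpha,\beta}$, the computation in the proof of Lemma \ref{lem: visco kalphabeta} supplies
\begin{equation*}
\khat(is) = \frac{\pi}{\sin(\beta\pi)}(1+is)^{-\beta} + O(|s|^{-1}) \quad (|s|\to\infty).
\end{equation*}
The principal term has modulus $\sim |s|^{-\beta}$ and argument approaching $\mp\beta\pi/2$ as $s\to\pm\infty$; since $\cos(\beta\pi/2)>0$ for $\beta\in(0,1)$, this yields $\Re\khat(is) \asymp |\khat(is)| \asymp |s|^{-\beta}$ for large $|s|$. Consequently the bracket on the left-hand side of (\ref{eq: additional assumptions}) is itself $\asymp |s|^{-\beta}$. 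Because $\partial\Omega$ is $C^\infty$, the Barnett--Hassell--Tacy bound quoted in Section \ref{sec: examples} lets us take the exponent $\alpha_0 = 2/3$ in (\ref{eq: upper and lower estimate for Neumann eigenfunctions}), so $L(s)=s^{2/3}(1+\log s)$. The additional assumption (\ref{eq: additional assumptions}) then reads $|s|^{-\beta}=o(s^{-2/3}/\log s)$, which is exactly the hypothesis $\beta>2/3$.

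With both sets of assumptions satisfied, Theorem \ref{thm: upper resolvent estimate} yields
\begin{equation*}
\norm{R(is)}_{L^2\to L^2} \leq \frac{C}{|s|\,\Re\khat(is)} \leq C\,|s|^{\beta-1} \quad (s\geq 1),
\end{equation*}
and the identity $R(\overline{z}) = R(z)^*$, together with $\Re\khat(-is)=\Re\khat(is)$, extends the estimate to $s\leq -1$. Applying Theorem \ref{thm: Preliminary estimates}(i) with the increasing, $[1,\infty)$-valued function $M(\sigma) := \max(1,\sigma^\beta)$ then produces $\norm{(i\sigma+\A)^{-1}} \leq C|\sigma|^{\beta}$ for $|\sigma|$ large, which is the claim. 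The only delicate point is the alignment of exponents in (\ref{eq: additional assumptions}): once the asymptotics of $\khat$ from Lemma \ref{lem: visco kalphabeta} are in hand, the hypothesis $\beta>2/3$ matches the Neumann-eigenfunction exponent $\alpha_0=2/3$ by a hair, and every remaining step reduces to a direct invocation of a previously established result.
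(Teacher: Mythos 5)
Your proof is correct and takes exactly the same route as the paper, which disposes of this Proposition in one line as ``an immediate consequence of Lemma \ref{lem: visco kalphabeta} together with Theorem \ref{thm: Preliminary estimates}(i) and Theorem \ref{thm: upper resolvent estimate}.'' You simply spell out the verification that $\Re\khat\asymp|\khat|\asymp|s|^{-\beta}$ makes (\ref{eq: additional assumptions}) equivalent to $\beta>2/3$ when $\alpha_0=2/3$, and then chain the two theorems; all of this is what the paper intends the reader to fill in.
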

  
  \begin{proof}
   This is an immediate consequence of Lemma \ref{lem: visco kalphabeta} together with Theorem \ref{thm: Preliminary estimates}(i) and Theorem \ref{thm: upper resolvent estimate}.
  \end{proof}
  
  We are now in the position to prove an \emph{optimal} decay estimate.
  
  \begin{Proposition}
   Let $\alpha\in(1,\infty), \beta\in(2/3,1)$ and $k=k_{\alpha,\beta}$. Let $\partial\Omega$ be a $C^\infty$-manifold. Then
   \begin{equation}\nonumber
    E(t,\xnice_0) \leq \frac{C}{1+t^2} \left[ E_1(\xnice_0)
                                                   + \int_0^{\infty} \norm{\psi_0(\tau)}_{L^2(\partial\Omega)}^2 \frac{d\nu(\tau)}{\tau^2} \right]
   \end{equation}
   holds for all $t\geq0$ and for all $\xnice_0=(p_0,v_0,\psi_0)\in\Ho$ for which the right-hand side is finite. The constant $C>0$ does not depend on $\xnice_0$ or $t$. Moreover this estimate is sharp in the sense that it would be invalid if one replaces $C/(1+t^2)$ by $o(1/(1+t^2))$ as $t$ tends to infinity.
  \end{Proposition}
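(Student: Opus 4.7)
The idea is to reduce the problem to decay on data in $R(\A)$, and then extract a $1/t$ decay in norm (equivalently $1/t^2$ in energy) from the $|s|^{-1}$ singularity of the resolvent at zero, using Theorem \ref{thm: Martinez}.

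Since $\alpha>1$, Lemma \ref{lem: visco kalphabeta} gives $(\tau\mapsto\tau^{-1})\in L^2_\nu$, so case (i) of Theorem \ref{thm: range of A} applies: finiteness of $\int_0^\infty \|\psi_0(\tau)\|^2 d\nu(\tau)/\tau^2$ together with $\xnice_0\in\Ho$ places $\xnice_0\in R(\A)$, and by injectivity of $\A$ (Theorem \ref{thm: A is injective}) there is a unique $\ynice\in\Ho_1$ with $\A\ynice=\xnice_0$. Tracking constants through the construction in the ``$\Leftarrow$''-part of the proof of Theorem \ref{thm: Preliminary estimates}(iii) yields the quantitative bound $\|\ynice\|_\Ho^2\leq C[E(\xnice_0)+\int_0^\infty\|\psi_0(\tau)\|^2 d\nu(\tau)/\tau^2]$. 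Combined with the $E_1$-hypothesis (so that $\xnice_0=\A\ynice\in\Ho_1$) this forces $\ynice\in D(\A^2)$, with the full right-hand side of the proposition controlling $\|\ynice\|_{D(\A^2)}^2$.

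Next, I would assemble the resolvent bounds on the imaginary axis. Theorem \ref{thm: Preliminary estimates}(ii) gives $\|(is+\A)^{-1}\|\leq C|s|^{-1}$ near $s=0$; Proposition \ref{prop: visco singularity at infinity} gives $\|(is+\A)^{-1}\|\leq C|s|^\beta$ as $|s|\to\infty$; and by Theorem \ref{thm: spectrum DFMP} the imaginary axis is contained in the resolvent set away from $0$. Writing $e^{-t\A}\xnice_0=\A e^{-t\A}\ynice$ and invoking Theorem \ref{thm: Martinez} (the variant adapted to a singularity at zero) applied to $\ynice$, the borderline order $|s|^{-1}$ at zero translates into a norm bound $\|e^{-t\A}\xnice_0\|_\Ho\leq C(1+t)^{-1}\|\ynice\|_{D(\A)}$. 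The competing bound coming from the behaviour at infinity (via Borichev--Tomilov on $\ynice\in D(\A^2)$) is of order $t^{-2/\beta}$, which is asymptotically strictly faster since $\beta<1$ and therefore does not improve the bound from zero. Squaring and absorbing $t\in[0,1]$ via contractivity of the semigroup yields the stated estimate $E(t,\xnice_0)\leq C(1+t^2)^{-1}[E_1(\xnice_0)+\int\|\psi_0\|^2 d\nu(\tau)/\tau^2]$.

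For the sharpness statement, I would exhibit explicit low-frequency data saturating the rate. Choose $\xnice_0^{(n)}=(0,0,\psi_0^{(n)})$ with $\psi_0^{(n)}$ supported near $\tau\sim 1/n$ on the boundary and normalized so that the right-hand side of the proposition is of order one. On such ``purely memory'' data the interior dynamics is subdominant (the interior couples back to $\psi$ through the boundary condition, whose contribution is small for $\tau$ near zero) and the $\psi$-component evolves essentially by multiplication by $e^{-t\tau}$. A direct computation using $\sup_{\tau>0}(\tau e^{-t\tau})^2=(et)^{-2}$ then gives $E(t,\xnice_0^{(n)})\asymp t^{-2}$ at $t\sim n$, which would contradict any uniform decay $o(1/(1+t^2))$. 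The main obstacle is the clean invocation of Theorem \ref{thm: Martinez} at the borderline order $|s|^{-1}$ and the careful propagation of constants through the range characterisation in Step 1; once those are in hand, the remaining steps are bookkeeping.
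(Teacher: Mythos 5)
Your overall strategy is close to the paper's, but there is a real gap at the central step: you invoke Theorem \ref{thm: Martinez} to turn the resolvent singularities at $0$ and $\infty$ into a decay rate, and claim this yields $\|e^{-t\A}\xnice_0\|_\Ho\leq C(1+t)^{-1}\|\ynice\|_{D(\A)}$. That does not follow. Theorem \ref{thm: Martinez} produces the rate
$M_{log}^{-1}(t/C)^{-1}+m_{log}^{-1}(t/C)+1/t$,
and with $m(s)=s^{-1}$ one has $m_{log}(s)\approx s^{-1}\log(1/s)$ near $0$, hence $m_{log}^{-1}(t)\approx\log(t)/t$. This term dominates $1/t$, so Martinez's theorem, applied as you propose, only gives $O(\log(t)/t)$ in norm, i.e.\ $O(\log(t)^2/t^2)$ in energy, not the claimed $1/(1+t^2)$. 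The paper avoids this precisely by appealing to \cite[Theorem 8.4]{BattyChillTomilov2016}, the Hilbert-space refinement which removes the logarithmic loss in the case of polynomial resolvent bounds (here $m(s)=s^{-1}$ at zero, $M(s)=s^{\beta}$ at infinity). Without that input your chain of inequalities does not close at the stated rate. The excursion through $\ynice=\A^{-1}\xnice_0$ and $D(\A^2)$ is also unnecessary: one applies the abstract decay theorem directly to $\xnice_0\in D(\A)\cap R(\A)$, and the factor on the right is $\|\xnice_0\|_{D(\A)\cap R(\A)}^2$, which by Theorem \ref{thm: range of A}(i) is comparable to $E_1(\xnice_0)+\int_0^{\infty}\|\psi_0(\tau)\|^2 d\nu(\tau)/\tau^2$; no bound on a second antiderivative of $\xnice_0$ is needed.

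For the sharpness part you propose an explicit construction with $\psi$-data concentrated near $\tau\sim 1/n$. That is a genuinely different route from the paper, which simply cites the abstract optimality statement \cite[Theorem 6.9 and the remarks in Chapter 8]{BattyChillTomilov2016}. Your idea is plausible but, as written, heuristic: the assertion that ``the interior dynamics is subdominant'' and that $\psi$ essentially evolves by $e^{-t\tau}$ ignores the coupling through the boundary condition and the fact that the semigroup does not preserve the splitting into $(p,v)$ and $\psi$ components. To make it rigorous one would have to estimate the off-diagonal parts of $e^{-t\A}$ on such data, which is not a bookkeeping step. The abstract citation is the cleaner path and matches the resolvent bound already established.
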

  
  \begin{proof}
   Proposition \ref{prop: visco singularity at infinity}, Theorem \ref{thm: Preliminary estimates}(ii) together with \cite[Theorem 8.4]{BattyChillTomilov2016} yield
   \begin{equation}\nonumber
    \norm{e^{t\A}\xnice_0} \leq \frac{C}{1+t} \norm{\xnice_0}_{D(\A) \cap R(\A)} \text{ for all } \xnice_0\in D(\A) \cap R(\A).
   \end{equation}
   We know that the norm of $D(\A)$ is (equivalent to) the square root of the first order energy $E_1$. By Theorem \ref{thm: range of A} the norm on $R(\A)$ is given by
   \begin{equation}\nonumber
    \norm{\xnice_0}_{R(\A)}^2 = E(\xnice_0) + \int_0^{\infty} \norm{\psi_0(\tau)}_{L^2(\partial\Omega)}^2 \frac{d\nu(\tau)}{\tau^2}.
   \end{equation}
   This gives the desired estimate. The sharpness of this estimate follows from \cite[Theorem 6.9 and the remarks in Chapter 8]{BattyChillTomilov2016}.
  \end{proof}

\section{Optimal decay rates for the 1D case}\label{sec: 1D case}
Throughout this section $\Omega=(0,1)$ and $k$ is a completely monotonic, integrable function. We aim to show that in the 1D setting the conclusion of Theorem \ref{thm: upper resolvent estimate} remains true without any further hypothesis - like (\ref{eq: additional assumptions}) - on the acoustic impedance. Even more can be done - we prove that the upper estimate is optimal. More precisely we prove
\begin{Theorem}\label{thm: optimal resolvent estimate 1D}
 Let $\Omega=(0,1)$. Then there are constants $c,C>0$ such that for all $s>1$ we have
 \begin{equation}\nonumber
  \frac{c}{\Re\khat(is)} \leq \sup_{1\leq\abs{\sigma}\leq s} \norm{(i\sigma+\A)^{-1}} \leq \frac{C}{\Re\khat(is)}.
\end{equation}
\end{Theorem}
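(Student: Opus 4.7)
The plan is to prove both bounds by explicit computation, exploiting that in one dimension the stationary problem reduces to an ODE with impedance boundary conditions. By Theorem \ref{thm: Preliminary estimates}(i) the upper bound for $(is+\A)^{-1}$ is equivalent to the estimate $\norm{R(is)}_{L^2\to L^2}\leq C(s\,\Re\khat(is))^{-1}$ for large $s$, because the function $s\mapsto 1/\Re\khat(is)$ is increasing: the Stieltjes representation
\[ \Re\khat(is)=\int_0^\infty\frac{\tau}{s^2+\tau^2}\,d\nu(\tau) \]
is manifestly decreasing in $s>0$. The lower bound will follow from the spectral inequality $\norm{(i\sigma+\A)^{-1}}\geq 1/\operatorname{dist}(i\sigma,\sigma(-\A))$ combined with a Rouch\'e argument producing eigenvalues of $-\A$ sufficiently close to the imaginary axis.

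For the upper bound I would write the Green's function of $u''+s^2u=-f$ with the impedance conditions $-u'(0)+is\khat(is)u(0)=0$ and $u'(1)+is\khat(is)u(1)=0$ in terms of the fundamental solutions
\[ \phi_0(x)=\cos(sx)+i\khat(is)\sin(sx),\qquad \phi_1(x)=\cos(s(1-x))+i\khat(is)\sin(s(1-x)), \]
designed so that $\phi_0$ satisfies the boundary condition at $0$ and $\phi_1$ the one at $1$. A direct computation gives the Wronskian
\[ W=\phi_0\phi_1'-\phi_0'\phi_1=s\sin(s)\bigl(1+\khat(is)^2\bigr)-2is\khat(is)\cos(s). \]
Setting $\khat(is)=a-ib$ with $a=\Re\khat(is)>0$ (both $a,b\to 0$ as $s\to\infty$ by Riemann--Lebesgue), a short case split depending on whether $|\cos(s)|\geq 1/2$ or $|\cos(s)|<1/2$ shows that $|W/s|\geq c\,a$ for all sufficiently large $s$, hence $|W|\geq c\,s\,\Re\khat(is)$. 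Since $\phi_0,\phi_1$ are pointwise bounded by $1+|\khat(is)|\leq C$, the Green's kernel is bounded by $C/|W|$, and the resolvent norm by the $L^\infty$-norm of the kernel, yielding the claim; the range $1\leq s\leq s_0$ is trivial since $\Re\khat(is)$ is then bounded below.

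For the lower bound I would use the ansatz $u=Ae^{zx}+Be^{-zx}$; imposing both impedance boundary conditions shows that the non-real spectrum of $-\A$ is precisely the zero set of
\[ F_\pm(z)=(1+\khat(z))e^z\mp(1-\khat(z)). \]
In the undamped case $k\equiv 0$ these reduce to $e^z\mp 1$, whose zeros are the Neumann frequencies $in\pi$, $n\in\mathbb{Z}$. For large $|n|$ one has $F_\pm(in\pi)=\pm 2\khat(in\pi)\to 0$ while $F_\pm'(in\pi)\to\pm 1\neq 0$ (since $\khat(in\pi),\khat'(in\pi)\to 0$ by the integral formulas for $\khat$ and $\khat'$), so holomorphic Rouch\'e produces a unique zero $z_n=in\pi-\xi_n$ with $\xi_n=2\khat(in\pi)+o(|\khat(in\pi)|)$. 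In particular $-\Re z_n=\Re\xi_n\leq 3\Re\khat(in\pi)$ and $\sigma_n:=\Im z_n=n\pi+O(|\khat(in\pi)|)$. Given $s>1$ large, pick $n\in\N$ with $n\pi\in[s/2,s-1]$; then $1\leq\sigma_n\leq s$, and the elementary inequality $\tau/(s^2+\tau^2)\geq\tfrac14\tau/(\sigma_n^2+\tau^2)$ (valid since $\sigma_n\geq s/2$) gives $\Re\khat(i\sigma_n)\leq 4\Re\khat(is)$. The spectral inequality therefore delivers
\[ \norm{(i\sigma_n+\A)^{-1}}\geq\frac{1}{\Re\xi_n}\geq\frac{c}{\Re\khat(in\pi)}\geq\frac{c'}{\Re\khat(is)}, \]
and small $s$ is handled by continuity. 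The main obstacle is the Wronskian lower bound $|W|\gtrsim s\,\Re\khat(is)$: the expression mixes the highly oscillating factors $\sin(s),\cos(s)$ with the small quantities $a,b$, and the bound is borderline sharp, exploiting $\max(|\sin s|,|\cos s|)\geq 1/\sqrt 2$; a secondary subtlety is tuning the Rouch\'e radius to order $|\khat(in\pi)|$, rather than a fixed constant, in order to extract the sharp asymptotics $\xi_n\sim 2\khat(in\pi)$.
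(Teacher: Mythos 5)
Your overall strategy matches the paper's: an explicit Green's/variation-of-parameters formula for the stationary resolvent for the upper bound, and a Rouch\'e-plus-Taylor localization of eigenvalues for the lower bound. The upper bound is fine -- your Wronskian $W$ is exactly the paper's quantity $sD(s)\sin(s)$, and the case split on $|\cos s|$ yields $|W|\gtrsim s\Re\khat(is)$ as in the paper.

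The lower bound, however, has a genuine gap at the step ``$\xi_n=2\khat(in\pi)+o(|\khat(in\pi)|)$, in particular $\Re\xi_n\leq 3\Re\khat(in\pi)$.'' This implication is false in general: it would require the error term $o(|\khat|)$ to be dominated by $\Re\khat$, but $\Re\khat(is)$ may decay much faster than $|\khat(is)|\sim|\Im\khat(is)|$ (indeed Cauchy--Schwarz only gives $|\Im\khat|^2\lesssim \Re\khat$, so $|\khat|/\Re\khat$ typically blows up). This is precisely the subtlety the paper flags immediately before proving Proposition~\ref{thm: spectrum 1D}: the crude asymptotic $z_n=(2+o(1))\khat(in\pi)$, which is all Rouch\'e with a radius of order $|\khat|$ yields, is \emph{not} enough. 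To close the gap one must extract an error that is \emph{multiplicative} relative to $\Re\khat$ rather than additive relative to $|\khat|$: the paper does this by manipulating $e^z=\frac{1-\khat}{1+\khat}$ directly, splitting modulus and argument, to obtain $-\Re z_n=\big[(2+O(|\khat|))\Re\khat\big](i\pi n)$. Alternatively, you could refine the error to $O(|\khat|^2)$ (not merely $o(|\khat|)$) by a genuine second-order Taylor/fixed-point step and then invoke the a priori bound $|\khat|^2\lesssim\Re\khat$ that follows from $\int\tau^{-1}d\nu<\infty$; but as written, the leap from $o(|\khat|)$ to $3\Re\khat$ does not follow.
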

We prove the lower bound by investigating the spectrum of $-\A$ which is close to the imaginary axis (Subsection \ref{sec: the spectrum}). Furthermore we give a more or less concrete formula for the stationary resolvent operator $R(is)$ which allows to prove the upper bound (Subsection \ref{sec: Upper resolvent estimate}). Subsection \ref{sec: Decay rates} contains implications of Theorem \ref{thm: optimal resolvent estimate 1D} for the decay rates of the energy of the wave equation.

\subsection{The spectrum}\label{sec: the spectrum}
The spectrum of $-\A$ satisfies a characteristic equation which is implicitly contained in \cite{DFMP2010b}. For convenience of the reader we give a complete proof.
\begin{Proposition}
 A number $z\in \C\backslash(-\infty,0]$ is in the spectrum of $-\A$, and hence an eigenvalue, if and only if it satisfies
 \begin{equation}\label{eq: characteristic equation 1D}
  \left(\khat(z)-i\tan\left(\frac{iz}{2}\right)\right)\cdot \left(\khat(z)+i\cot\left(\frac{iz}{2}\right)\right) = 0
 \end{equation}
\end{Proposition}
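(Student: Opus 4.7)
The plan is to reduce the spectral problem, via the Fredholm alternative, to solving a second-order constant-coefficient ODE with two Robin-type boundary conditions, and then to rewrite the resulting transcendental determinant equation into the factored form claimed.

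First, by Theorem \ref{thm: spectrum DFMP}, $z\in\sigma(-\A)\setminus(-\infty,0]$ iff (\ref{eq: stationary wave equation FA}) fails to be uniquely solvable. On $H^1(0,1)$ the sesquilinear form $a_z$ differs from the standard inner product $(p,u)\mapsto \int p\overline{u}+\int p'\overline{u'}$ by the lower-order terms $(z^2-1)\int p\overline{u}$ and $z\khat(z)\bigl[p(0)\overline{u(0)}+p(1)\overline{u(1)}\bigr]$. Since $H^1(0,1)$ embeds compactly into $L^2(0,1)$ and (through the trace) into $\mathbb{C}^2$, the induced operator $A_z:H^1(0,1)\to H^1(0,1)^*$ is a compact perturbation of an isomorphism and hence Fredholm of index zero. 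I would conclude that invertibility of $A_z$ is equivalent to injectivity, i.e.\ to the absence of a nontrivial $u\in H^1(0,1)$ with $a_z(u,\cdot)=0$.

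Next, I would interpret such a nontrivial $u$. Testing against $C_c^\infty(0,1)$ gives $u''=z^2u$, so $u\in C^\infty([0,1])$, and testing against general $H^1$-functions and integrating by parts (keeping track of the outward normals $\mp 1$ at $x=0,1$) yields the boundary conditions $u'(0)=z\khat(z)u(0)$ and $u'(1)=-z\khat(z)u(1)$. Writing $u(x)=A\cosh(zx)+B\sinh(zx)$, the condition at $x=0$ gives $B=\khat(z)A$ (using $z\neq 0$), and substituting into the condition at $x=1$ and cancelling the factor $z$ leaves
\begin{equation*}
 A\bigl[(1+\khat(z)^2)\sinh z+2\khat(z)\cosh z\bigr]=0.
\end{equation*}
Since $A=0$ forces $u\equiv 0$, existence of a nontrivial $u$ is equivalent to the bracketed expression vanishing.

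Finally, I would convert this into the product form of the statement. Using $\sinh z=2\sinh(z/2)\cosh(z/2)$ and $\cosh z=\cosh^2(z/2)+\sinh^2(z/2)$, divide by $\cosh^2(z/2)$ and set $t=\tanh(z/2)$; the bracket becomes $2\bigl[(1+\khat^2)t+\khat(1+t^2)\bigr]=2(t+\khat)(1+t\khat)$. From $\tan(iw)=i\tanh(w)$ one gets $i\tan(iz/2)=-t$ and $i\cot(iz/2)=1/t$, so
\begin{equation*}
 \khat(z)-i\tan(iz/2)=\khat(z)+t,\qquad \khat(z)+i\cot(iz/2)=\frac{1+t\khat(z)}{t},
\end{equation*}
whose product is exactly $(t+\khat)(1+t\khat)/t$, matching the equation up to the nonvanishing factor $2\cosh^2(z/2)/t$. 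The main subtlety I expect is the bookkeeping at the discrete exceptional points where $\sinh(z/2)$ or $\cosh(z/2)$ vanishes (so $t=0$ or a factor develops a pole): there I would argue directly on the unfactored characteristic equation $(1+\khat^2)\sinh z+2\khat\cosh z=0$, or interpret the product equation as an equality of meromorphic functions so that a zero of the product means a zero of one of the factors, after which one can read off the equivalence at those points case-by-case.
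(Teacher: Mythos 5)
Your proposal is correct and follows the same overall route as the paper: invoke Theorem \ref{thm: spectrum DFMP} to reduce the spectral question to the existence of a nontrivial solution of a two-point Robin boundary value problem for $u''=z^2u$, solve with a two-parameter ansatz, obtain a transcendental characteristic equation, and then factor it. Three implementation differences are worth noting. (1) Your Fredholm-alternative step is redundant: the cited theorem already states that every spectral point in $\C\setminus(-\infty,0]$ is an \emph{eigenvalue}, which directly gives the reduction to the kernel of the form $a_z$. (2) You work with hyperbolic functions and the substitution $t=\tanh(z/2)$, factoring $(1+\khat^2)t+\khat(1+t^2)=(t+\khat)(1+t\khat)$ before translating back; the paper works with $\cos(izx)$, $\sin(izx)$, reaches $\bigl(\khat^2+2i\khat\cot(iz)+1\bigr)z\sin(iz)=0$, and factors directly using the identity $\cot\zeta-\tan\zeta=2\cot(2\zeta)$. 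These are algebraically equivalent, but the paper's version bypasses the change of variable. (3) For the exceptional points where $\sin(iz)$ vanishes or $\tanh(z/2)$ degenerates, you propose a case-by-case or meromorphic-function argument; the paper disposes of them at one stroke by observing that all such $z$ lie on $i\R$, whereas Theorem \ref{thm: spectrum DFMP} guarantees eigenvalues in $\C\setminus(-\infty,0]$ have strictly negative real part, so dividing by $z\sin(iz)$ loses nothing. Your plan would work, but this last observation would tighten it considerably.
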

\begin{proof}
 By Theorem \ref{thm: spectrum DFMP} together with the equivalence between (\ref{A}) and (\ref{R}) we see that $z$ is a spectral point if and only if there is a non-zero function $p$ solving
 \begin{align*}
  \begin{cases}
   z^2p(x) - p''(x) = 0 & (x\in(0,1)), \\
   -p'(0) + z\khat(z)p(0) = 0, &  \\
   p'(1) + z\khat(z)p(1) = 0. & 
  \end{cases}
 \end{align*}
 Up to a scalar factor the first two lines are equivalent to the following ansatz
 \begin{equation}\nonumber
  p(x) = \cos(iz x) - i\khat(z)\sin(iz x) .
 \end{equation}
 Plugging this into the third line yields that $z$ is an eigenvalue if and only if
 \begin{equation}\label{eq: precharacteristic equation}
  \left(\khat(z)^2 + 2i\khat(z)\cot(iz) + 1\right) z\sin(iz) = 0.
 \end{equation}
 Note that the zeros of the sine function do not lead to an eigenvalue since the cotangent function has a singularity at the same point. Actually we already know from the situation of general domains that an eigenvalue which is neither zero nor a negative number must have negative real-part. Thus we may simplify (\ref{eq: precharacteristic equation}) by dividing by $z\sin(iz)$. The claim now follows from the formula $\cot(\zeta)-\tan(\zeta)=2\cot(2\zeta)$ which is valid for all complex numbers $\zeta$.
\end{proof}

Let $H,R>0$. The reader may consider $H$ and $R$ as large numbers. We are interested in the part of the spectrum of $-\A$ contained in the strip
\begin{equation}\nonumber
 \Ueps = \{z\in\C; \abs{\Im z}>R \text{ and } 0 < -\Re z < H\} .
\end{equation}
\begin{Proposition}\label{thm: spectrum 1D}
 Let $H>0$. Then for $R>0$ large enough there exists a natural number $n_0>0$ such that the part of the spectrum of $-\A$ which is contained in $\Ueps$ is given by a doubly infinite sequence $(z_n)_{n=\pm n_0}^{\infty}$ with $z_{-n}=\overline{z_n}$ for all $n$ and
 \begin{align*}
  \Im z_n &= \pi n - \left[(2+O(|\khat|))\Im\khat\right](i\pi n), \\
  \Re z_n &= -\left[(2+O(|\khat|))\Re\khat\right](i\pi n).
 \end{align*}
\end{Proposition}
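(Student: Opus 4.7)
The plan is to apply Rouch\'e's theorem to the two factors of the characteristic equation (\ref{eq: characteristic equation 1D}) separately, viewing $\khat$ as a small analytic perturbation of $i\tan(iz/2)$ and $-i\cot(iz/2)$ respectively, and then refine the Rouch\'e bound by one round of Newton iteration to obtain the claimed expansion.

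First I would locate the unperturbed zeros. The function $i\tan(iz/2)$ vanishes precisely at $z\in 2\pi i\Z$, and $-i\cot(iz/2)$ vanishes precisely at $z\in \pi i(2\Z+1)$; in both cases the zero is simple with derivative of modulus $1/2$. The union of the two zero sets is $\pi i\Z$. Since $k$ is real-valued, $\khat(\overline z)=\overline{\khat(z)}$, so the spectrum is symmetric in the real axis and it suffices to study $\Im z>0$. For fixed small $\rho\in(0,\pi/4)$ I would consider the disks $D_n=\{|z-n i\pi|<\rho\}$ for integers $n$ with $n\pi>R$; these lie inside $\Ueps$ provided $R>\rho$ and $H>\rho$. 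On $\partial D_n$ one has $|i\tan(iz/2)|\gtrsim \rho$ (respectively $|{-i\cot(iz/2)}|\gtrsim\rho$), while the Riemann--Lebesgue property for Laplace transforms of $L^1$ functions gives $\khat(z)\to 0$ uniformly in the closed strip $\{0\le-\Re z\le H\}$ as $|\Im z|\to\infty$. Therefore there exists $n_0=n_0(H,R,\rho)$ such that for every $n\ge n_0$ Rouch\'e's theorem furnishes exactly one zero of the relevant factor inside $D_n$, and the other factor is non-vanishing on $\overline{D_n}$ since its unperturbed version stays uniformly away from zero there. Outside $\bigcup D_n$ but still in the part of $\Ueps$ with $\Im z>R$, both $|i\tan(iz/2)|$ and $|{-i\cot(iz/2)}|$ admit a positive uniform lower bound (this uses that $\Re z$ is bounded), while $|\khat(z)|$ can be made strictly smaller by enlarging $R$; so there are no further zeros. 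Together with the conjugation symmetry this yields the doubly infinite family $(z_n)_{|n|\ge n_0}$ with $z_{-n}=\overline{z_n}$, and $z_n\in D_n$, hence $z_n = n i\pi +\zeta_n$ with $\zeta_n\to 0$.

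Next I would sharpen the localization by Newton iteration. Suppose for concreteness that $n$ is even, so $z_n$ satisfies $\khat(z_n)=i\tan(iz_n/2)$. Writing $z_n=ni\pi+\zeta_n$ and using $\tan(-n\pi/2+i\zeta_n/2)=i\tanh(\zeta_n/2)$ (for even $n$) gives $\tanh(\zeta_n/2)=-\khat(z_n)$, hence $\zeta_n=-2\khat(z_n)+O(|\zeta_n|^3)$. For odd $n$ the analogous computation with the second factor produces the same relation. Now I would plug in $\khat(z_n)=\khat(in\pi)+\khat'(in\pi)\zeta_n+O(|\zeta_n|^2)$; using the representation $\khat(z)=\int(z+\tau)^{-1}\,d\nu(\tau)$ one checks that $|\khat'(is)|\le|s|^{-1}\int s(\tau^2+s^2)^{-1}d\nu=|s|^{-1}|\Im\khat(is)|\le C|s|^{-1}|\khat(is)|$, so $\khat'(in\pi)\zeta_n=O(|\khat(in\pi)|^2/n)$. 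Feeding this back,
\[
 \zeta_n=-2\khat(in\pi)\bigl(1+O(|\khat(in\pi)|)\bigr),
\]
and separating real and imaginary parts yields the asserted expansions for $\Re z_n$ and $\Im z_n$.

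The delicate step is the Rouch\'e argument in the region of $\Ueps$ away from the disks $D_n$: one has to rule out ``extra'' spectrum hiding somewhere in the strip with $-\Re z$ of order one but $\Im z$ not close to any $n\pi$. I expect this to reduce to a uniform lower bound of the form $\min\{|\tan(iz/2)|,|\cot(iz/2)|\}\ge c(\rho,H)>0$ on $\{0\le-\Re z\le H,\,\Im z>R\}\setminus\bigcup D_n$, combined with $\khat(z)=o(1)$ there, which together with the Riemann--Lebesgue bound force both $|F_1|$ and $|F_2|$ to be bounded below. The rest of the argument is a routine combination of Rouch\'e's theorem with implicit differentiation of the characteristic equation.
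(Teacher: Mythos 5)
Your proposal follows the same general plan as the paper: Rouch\'e to locate the spectrum near the lattice $i\pi\Z$, then an asymptotic refinement to pin down the deviation. The Rouch\'e portion is sound (a fixed-radius-disk version of the paper's argument). The gap is in the final refinement step, and it is exactly the step the paper flags as delicate in the remark preceding its proof.

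You obtain $\zeta_n = -2\khat(in\pi)\bigl(1+O(|\khat|)\bigr)$ and then assert that ``separating real and imaginary parts yields the asserted expansions.'' It does not. The error term $-2\khat\cdot O(|\khat|)$ is a complex number of modulus $O(|\khat|^2)$, so taking real parts of your packaged formula only yields
\begin{equation*}
 \Re\zeta_n = -2\Re\khat(in\pi) + O\bigl(|\khat(in\pi)|^2\bigr),
\end{equation*}
whereas the Proposition asserts $\Re\zeta_n = -\bigl(2+O(|\khat|)\bigr)\Re\khat(in\pi) = -2\Re\khat + O\bigl(|\khat|\,\Re\khat\bigr)$. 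To pass from the former to the latter one would need $|\khat|^2=O(|\khat|\Re\khat)$, i.e. $|\khat|=O(\Re\khat)$, which is false in general: e.g.\ for $d\nu=\delta_1$ one has $\Re\khat(is)\asymp s^{-2}\ll s^{-1}\asymp|\khat(is)|$. This is precisely the scenario -- real part decaying faster than modulus -- that the paper warns makes the naive $z_n=(2+o(1))\khat(in\pi)$ insufficient for the lower resolvent bound.

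Your intermediate bounds are actually sharper than the packaged statement: you have $\khat'(in\pi)\zeta_n=O(|\khat|^2/n)$ and a $\tanh^{-1}$-inversion error of $O(|\khat|^3)$. These \emph{can} be shown to be $O(|\khat|\Re\khat)$ in modulus, using $|\khat(is)|\gtrsim |s|^{-1}$ (since $\nu\neq 0$) together with the Cauchy--Schwarz inequality $|\Im\khat(is)|^2\le\bigl(\int\tau^{-1}d\nu\bigr)\Re\khat(is)$, whence $|\khat|^2\lesssim\Re\khat$; but neither observation appears in your proof, and both are needed for the separation step to go through. The paper avoids this accounting entirely by rewriting $F(z)=0$ as $e^z=(1-\khat)/(1+\khat)$ and extracting $\Re\zeta_n$ from the modulus and $\Im\zeta_n$ from the argument; in that form the error in $\Re\zeta_n$ is manifestly multiplicative in $\Re\khat$ and the error in $\Im\zeta_n$ multiplicative in $\Im\khat$. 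Either route can be made to work, but as written your final step is not justified.
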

As a consequence the lower bound in Theorem \ref{thm: optimal resolvent estimate 1D} is proved.

Note that the two asymptotic formulas given by the proposition imply $z_n=(2+o(1))\khat(in\pi)$ for $n$ tending to plus or minus infinity. This formula can be proved by the same Taylor expansion argument as in the proof of Lemma \ref{thm: spectrum on disk}. See also the remark after the proof of the mentioned lemma. But this is not enough in order to prove the lower bound in Theorem \ref{thm: optimal resolvent estimate 1D} since it might happen that the real part of $\khat(is)$ tends much faster to zero then its imaginary part! This explains the more elaborate Taylor expansion technique in the proof below.

\begin{proof}[Proof of Proposition \ref{thm: spectrum 1D}]
 We are searching for the solutions $z\in\Ueps$ of the characteristic equation (\ref{eq: characteristic equation 1D}). For simplicity we only consider the solutions of
 \begin{equation}\nonumber
  z\in\Ueps \text{ and } F(z) := \khat(z) - i\tan\left(\frac{iz}{2}\right) = 0.
 \end{equation}
 We apply a Rouch\'e argument to show that the zeros of this equation are close to the zeros $is_{2n}=2n\pi i$ of the tangens-type function on the right-hand side. Let $(\varepsilon_{2n})$ be a null-sequence of positive real numbers, smaller than $H$, to be fixed later. Let $B_{2n}$ be the open ball of radius $\varepsilon_{2n}$ around the center $is_{2n}$. For $r>0$ let 
 \begin{equation}
  \Veps(r)=\{z\in\C; R < \Im z < R+r \text{ and } -H < \Re z < H\}. 
 \end{equation}
 Take $K(r)$ to be the boundary of the set $\Veps(r)\backslash (\bigcup_n B_{2n})$. Since $\khat(z)$ tends to zero as $z$ tends to infinity with bounded real part we can choose $R$ so large and $(\varepsilon_{2n})$ so slowly decreasing such that $|\khat(z)|<\abs{i\tan(iz/2)}$ for $z\in K(r)$. Thus Rouch\'e's theorem for meromorphic functions says that for $F$ and for $(z\mapsto i\tan(iz/2))$ restricted to $\Veps(r)$ the number of zeros minus the number of poles (counted with multiplicity) is the same for all $r>0$. The poles of $F$ are actually the same as for for the tangens type function. Thus it is proved that for large enough $n_0\in\N$ the zeros of $F$ from $\Ueps$ for $R=(2n_0-1)\pi$ are simple and contained in the balls $B_{2n}$ for $\abs{n}\geq n_0$. Note that we used that we already know that zeros of the characteristic equation must have negative real part.
 
 We have verified that all zeros $z_{2n}$ of $F|_\Ueps$ are given by the following ansatz:
 \begin{equation}\nonumber
  z_{2n} = is_{2n} - \xi_{2n} \text{ with } \Re\xi_{2n}>0 \text{ and } \xi_{2n} = o(1).
 \end{equation}
 In the remaining part of the proof we want to simplify the notation by dropping the indices from $z, s$ and $\xi$. We also write $\khat$ instead of $\khat(z)$. It is not difficult to verify that $F(z)=0$ is equivalent to 
 \begin{equation}\nonumber
  e^z = \frac{1-\khat}{1+\khat} = \frac{(1-i\Im\khat) - \Re\khat}{(1+i\Im\khat) + \Re\khat}.
 \end{equation}
 Let $\alpha=\arg(1+i\Im\khat)$ be the argument of $1+i\Im\khat$ and $L=(1+(\Im\khat)^2)^{1/2}$. Then
 \begin{align*}
  &\arg(1\pm\khat) = \pm \alpha (1+O(\Re\khat)) = \pm (1+O(|\khat|))\Im\khat, \\
  &\text{thus } \Im \xi = 2(1+O(|\khat|))\Im\khat.
 \end{align*}
 This yields the first asymptotic formula claimed in the proposition. The second asympotic formula is a direct consequence of
 \begin{equation}\nonumber
  e^{-\Re\xi} = \frac{L-(1+O(\abs{\alpha}^2))\Re\khat}{L+(1+O(\abs{\alpha}^2))\Re\khat} = 1 - \frac{2}{L}(1+O(|\Im\khat|^2))\Re\khat + O((\Re\khat)^2).
 \end{equation}
\end{proof}

\subsection{Upper resolvent estimate}\label{sec: Upper resolvent estimate}
We prove the upper estimate stated in Theorem \ref{thm: optimal resolvent estimate 1D}. By Theorem \ref{thm: Preliminary estimates} part (i) it suffices to show
\begin{Proposition}
 For all $\abs{s}\geq 1$ we have $\norm{R(is)}_{L^2\rightarrow L^2}\leq C(\abs{s}\Re\khat(is))^{-1}$.
\end{Proposition}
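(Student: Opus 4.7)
The plan is to construct an explicit Green's function for the one-dimensional stationary problem $z^2 u - u'' = f$ on $(0,1)$ with boundary conditions $-u'(0) + z\hat{k}(z) u(0) = 0$ and $u'(1) + z\hat{k}(z) u(1) = 0$, then control it uniformly by $4/|W|$ where $W$ is the Wronskian of suitable fundamental solutions. Setting $z = is$ and introducing the reflection coefficient $r = (1 - \hat{k}(z))/(1+\hat{k}(z))$, the two functions $u_L(x) = e^{zx} + r\,e^{-zx}$ and $u_R(x) = u_L(1-x)$ each solve the homogeneous ODE and satisfy the left and the right boundary condition, respectively. Since $\Re\hat{k}(is)\geq 0$ implies $|1+\hat{k}|^2 - |1-\hat{k}|^2 = 4\Re\hat{k}(is) \geq 0$, we have $|r|\leq 1$, and hence the pointwise bound $\|u_L\|_\infty, \|u_R\|_\infty \leq 1+|r| \leq 2$.

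A short computation gives $W = u_L u_R' - u_L' u_R = -2z\bigl(e^z - r^2 e^{-z}\bigr)$. The critical step is to bound $|W|$ from below by something of order $|s|\Re\hat{k}(is)$; this will come from
\[
 \bigl|e^z - r^2 e^{-z}\bigr| = \bigl|1 - r^2 e^{-2z}\bigr| \geq 1 - |r|^2 = \frac{4\,\Re\hat{k}(is)}{|1+\hat{k}(is)|^2}.
\]
Since $|\hat{k}(is)|\leq \hat{k}(0)<\infty$ for the Laplace transform of a nonnegative integrable $k$, the denominator is uniformly bounded, so $|W|\geq c\,|s|\,\Re\hat{k}(is)$ for all $|s|\geq 1$.

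The solution of the inhomogeneous problem then has the representation $u(x) = \int_0^1 G(x,y)\,f(y)\,dy$ with Green's function satisfying $|G(x,y)| = |u_L(\min(x,y))|\,|u_R(\max(x,y))|/|W| \leq 4/|W|$. By Cauchy--Schwarz on $(0,1)$,
\[
 \|u\|_{L^2(0,1)} \leq \|u\|_{L^\infty(0,1)} \leq \frac{4}{|W|}\,\|f\|_{L^1(0,1)} \leq \frac{4}{|W|}\,\|f\|_{L^2(0,1)} \leq \frac{C}{|s|\,\Re\hat{k}(is)}\,\|f\|_{L^2(0,1)},
\]
which is exactly the claimed bound on $\|R(is)\|_{L^2\to L^2}$.

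The real substance of the argument lies in the Wronskian lower bound: it has to scale with $\Re\hat{k}(is)$ rather than with the a priori looser $|\hat{k}(is)|$. This is also where the sharpness of the estimate enters, since the zeros of $W$ are precisely the eigenvalues of $-\A$, and Proposition \ref{thm: spectrum 1D} shows that the distance from the imaginary axis to these zeros is of order $\Re\hat{k}$. Everything else is bookkeeping around the explicit exponential fundamental system, which is what makes the $1$D case cleaner than the general setting of Theorem \ref{thm: upper resolvent estimate}.
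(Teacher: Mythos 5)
Your proof is correct and takes the same basic route as the paper's: build an explicit solution from the $1$D fundamental system $e^{\pm z x}$ and reduce the estimate to a lower bound on a single scalar denominator. The packaging and, more importantly, the key lower bound differ. The paper decomposes $p = a p^0 + p_f$ with $p^0 = \cos(sx)+i\hat{k}(is)\sin(sx)$ a homogeneous solution fitting the left boundary condition and $p_f$ a variation-of-constants particular solution, then solves for $a$ from the condition at $x=1$; this produces a denominator $s\,D(s)\sin s$, and the crucial bound $|D(s)\sin s|\geq c\,\Re\hat{k}(is)$ is only argued informally by inspecting where the tangent/cotangent factors can become small. You instead introduce the reflection coefficient $r=(1-\hat{k})/(1+\hat{k})$ and the two one-sided solutions $u_L, u_R$ with their Wronskian $W=-2z(e^z-r^2e^{-z})$, and obtain the needed lower bound in one line: since $|e^{\pm z}|=1$ when $z=is$,
\begin{equation*}
 |W| = 2|s|\,\bigl|1-r^2e^{-2z}\bigr| \geq 2|s|\bigl(1-|r|^2\bigr) = \frac{8|s|\,\Re\hat{k}(is)}{|1+\hat{k}(is)|^2} \geq c\,|s|\,\Re\hat{k}(is),
\end{equation*}
using $|1+\hat{k}|^2-|1-\hat{k}|^2=4\Re\hat{k}$ and the uniform bound $|\hat{k}(is)|\leq\hat{k}(0)$. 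This makes fully transparent why the denominator is controlled linearly by $\Re\hat{k}$ rather than by the a priori weaker $|\hat{k}|$, which is exactly the subtle point the paper's sketch glosses over. Both arguments then finish with the same kind of bookkeeping, and both yield the same final estimate; yours is arguably cleaner precisely at the step that carries the mathematical content.
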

\begin{proof}
 For some $f\in L^2(0,1)$ let $p$ be the solution of
  \begin{align}\label{eq: stationary equation 1D}
  \begin{cases}
   -s^2p(x) - p''(x) = f & (x\in(0,1)), \\
   -p'(0) + is\khat(is)p(0) = 0, &  \\
   p'(1) + is\khat(is)p(1) = 0. & 
  \end{cases}
 \end{align}

Let us define two auxiliary functions $p_f$ and $p^0$ by
\begin{align*}
 p_f(x) = -\frac{1}{s}\int_0^x \sin(s(x-y))f(y) dy \text{ and } p^0(x) = \cos(sx) + i\khat(is)\sin(sx).
\end{align*}
It is easy to see that $p=ap^0+p_f$ with $a\in\C$ is the only possible ansatz which satisfies the first two lines in (\ref{eq: stationary equation 1D}). The parameter $a$ is uniquely defined by the condition from the third line. A short calculation yields that this condition is equivalent to
\begin{equation}\nonumber
 a s\cdot \underbrace{\left( \khat(is) + i\tan\left(\frac{s}{2}\right) \right) \left( \khat(is) - i\cot\left(\frac{s}{2}\right) \right)}_{=:D(s)}\cdot \sin(s)
 = - p_f'(1) - is\khat(is) p_f(1).
\end{equation}
Note that the singularities of $D$ cancel the zeros of the sine function. Thus we have an explicit formula for $a$ in terms of $f$. Further note that the absolute values of $sp_f(1)$ and $p_f'(1)$ can be estimated from above by a constant times $\norm{f}_{L^2}$. Thus 
\begin{equation}\nonumber
 \abs{a} \leq \frac{C}{\abs{s}} \cdot \frac{1}{\abs{D(s)\sin(s)}}\cdot \norm{f}_{L^2(0,1)}.
\end{equation}
By the presence of the tangent and contangent type function the factor $D(s)\sin(s)$ can only be small in a neighbourhood of $s=2n\pi$ or $s=(2n+1)\pi$. But in this case the real part of $\khat$ prevents $D$ from getting too small. We thus have an estimate $\abs{D(s)\sin(s)}\geq c \Re\khat(is)$ for $\abs{s}\geq 1$ which in turn gives an upper bound on $\abs{a}$. Since the $L^2$-norm of $p^0$ can be estimated from above by a constant the proof is finished.
\end{proof}

\subsection{Decay rates}\label{sec: Decay rates}
An immediate consequence of Theorem \ref{thm: optimal resolvent estimate 1D}, \ref{thm: Preliminary estimates}(iii), \ref{thm: Batty-Duyckaerts} and the remark after Theorem \ref{thm: Batty-Duyckaerts} is
\begin{Theorem}\label{thm: Decay rate 1D}
 Assume that $\nu|_{(0,\varepsilon)}=0$ for some $\varepsilon>0$. Then there are constants $c,C>0$ and $t_0>0$ such that for all $t\geq t_0$
 \begin{equation}\nonumber
  c M^{-1}(t/c) \leq \sup_{E_1(\xnice_0)\leq 1} E(t, \xnice_0) \leq C M_{log}^{-1}(t/C)
 \end{equation}
 where the increasing function $M:(0,\infty)\rightarrow(0,\infty)$ is given by $M(s)=(\Re\khat(is))^{-1}$.
\end{Theorem}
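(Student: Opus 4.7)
My plan is to combine the two-sided resolvent estimate of Theorem \ref{thm: optimal resolvent estimate 1D} with the Tauberian machinery collected in Appendix \ref{apx: semiuniform stability}. The statement is really a corollary once one has organized the resolvent information on all of $i\R$, not just on $\{|s|\geq1\}$.

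First I would assemble a uniform resolvent bound on the imaginary axis. The hypothesis $\nu|_{(0,\varepsilon)}=0$ together with Theorem \ref{thm: Preliminary estimates}(iii) yields that $\A$ is invertible, so $0$ lies in the resolvent set and by analyticity $\|(is+\A)^{-1}\|$ stays uniformly bounded on a neighborhood of $s=0$. For $|s|\geq 1$, the upper half of Theorem \ref{thm: optimal resolvent estimate 1D} gives $\|R(is)\|_{L^2\to L^2}\leq C(|s|\Re\khat(is))^{-1}$, and then Theorem \ref{thm: Preliminary estimates}(i) (with the increasing function $M(|s|)=(\Re\khat(is))^{-1}\geq 1$) promotes this to $\|(is+\A)^{-1}\|\leq CM(|s|)$. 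Gluing the two regimes and using monotonicity of $M$, I get a single bound $\|(is+\A)^{-1}\|\leq CM(1+|s|)$ valid for all $s\in\R$.

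Next I would feed this bound into Theorem \ref{thm: Batty-Duyckaerts}. Because $\A$ generates a contraction semigroup (Theorem \ref{thm: waves well posed}) and its spectrum does not meet the imaginary axis, the Batty–Duyckaerts theorem converts the resolvent bound $CM(1+|s|)$ into the semi-uniform decay rate $\|e^{-t\A}(1+\A)^{-1}\|\leq CM_{log}^{-1}(t/C)$, where $M_{log}$ is the logarithmic modification of $M$ appearing in the theorem. Because $\|\cdot\|_{D(\A)}^{2}$ is equivalent to $E_1$, squaring (or rather reading the quadratic form convention) yields the upper estimate $\sup_{E_1(\xnice_0)\leq1}E(t,\xnice_0)\leq CM_{log}^{-1}(t/C)$.

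For the lower bound I would appeal to the remark following Theorem \ref{thm: Batty-Duyckaerts}, which states that the growth of $\sup_{|\sigma|\leq s}\|(i\sigma+\A)^{-1}\|$ forces a corresponding obstruction to semi-uniform decay. The lower estimate of Theorem \ref{thm: optimal resolvent estimate 1D} provides exactly $\sup_{1\leq|\sigma|\leq s}\|(i\sigma+\A)^{-1}\|\geq cM(s)$, which translates into $\sup_{E_1(\xnice_0)\leq1}E(t,\xnice_0)\geq cM^{-1}(t/c)$ for all sufficiently large $t$. The only nontrivial bookkeeping is the transition between resolvent-at-infinity information and the global statement on $i\R$; invertibility of $\A$ from step one handles it cleanly, so I expect no genuine obstacle — the theorem is essentially a packaging of the preceding results.
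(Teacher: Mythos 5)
Your proposal is correct and matches the paper's (essentially one-line) proof: the theorem is stated as an immediate consequence of Theorem \ref{thm: optimal resolvent estimate 1D}, the invertibility of $\A$ from Theorem \ref{thm: Preliminary estimates}(iii) to control the resolvent near $s=0$, and Theorem \ref{thm: Batty-Duyckaerts} together with the remark following it for the two-sided transfer between resolvent growth and decay rate. The small detour through the $R(is)$-estimate and Theorem \ref{thm: Preliminary estimates}(i) is redundant, since Theorem \ref{thm: optimal resolvent estimate 1D} already bounds $\|(i\sigma+\A)^{-1}\|$ directly, but this does not affect the correctness of the argument.
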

We made the assumption $\nu|_{(0,\varepsilon)}=0$ (i.e. $\A$ is invertible) only to simplify the formulation of the theorem. A recipe how to adapt the formulation in case of a non-invertible $\A$ is given in Subsection \ref{sec: Example singularity at 0}.

\section{Further research}\label{sec: Conclusion}
For a complete treatment of resolvent estimates for wave equations like (\ref{eq: wave equation}) it would be desirable to answer at least the following two questions:

\textbf{Question 1.} Is the upper bound on $\norm{(is-\A)^{-1}}$, given by Theorem \ref{thm: upper resolvent estimate}, optimal? 

\textbf{Question 2.} Can one discard the additional assumption (\ref{eq: additional assumptions}) on $\khat$ without changing the conclusion of Theorem \ref{thm: upper resolvent estimate}?

A strategy to positively answer question 1 is to show that there exists a sequence of eigenvalues of $-\A$ which tend to infinity and approach the imaginary axis sufficiently fast. We have seen that this strategy works at least for $\Omega=(0,1)$ and $\Omega=D$ (see Section \ref{sec: 1D case} and Subsection \ref{sec: examples disk}). For the disk we restricted to kernels $k=k_{\beta,\varepsilon}$. However, with the more elaborate Taylor argument which proved Proposition \ref{thm: spectrum 1D} one can discard this restriction from Lemma \ref{thm: spectrum on disk}. We believe that there is a general argument for any bounded Lipschitz domain $\Omega$ yielding the existence of such a sequence of eigenvalues.

By our investigations in Section \ref{sec: 1D case} we already have a positive answer for question 2 in the 1D setting. Moreover, if $\Omega=D$ is the disk we already know from the spectrum that an increasing function $M$ with $M(s)=o((\Re\khat(is))^{-1})$ can never be an upper bound for $\norm{(is+\A)^{-1}}$ for all large $\abs{s}$. We think that the answer to question 2 is either ``yes'', or if ``no'' then the upper bound solely depends on $\Re\khat$ and the infimum of all $\alpha$ making the upper estimate in (\ref{eq: upper and lower estimate for Neumann eigenfunctions}) true for all spectral cluster $p$. 

Concerning the application of resolvent estimates to energy decay there is also a third question. Let us assume for a moment that the answers to questions 1 and 2 were positive. Then Theorem \ref{thm: Decay rate 1D} was true for any $\Omega$. In general it is not possible to replace $M_{log}$ by $M$ in Theorem \ref{thm: Batty-Duyckaerts}. However, does our particular situation allow for a smaller upper bound? In our opinion the most elegant result would be a positive answer to 

\textbf{Question 3.} Is Theorem \ref{thm: Decay rate 1D} true for all bounded Lipschitz domains $\Omega$ - even with $M_{log}$ replaced by $M$?

\begin{appendix}
\section{Besov spaces and the trace operator}\label{apx: Traces}

In this article we work with fractional Sobolev spaces, Besov spaces and the trace operator acting on them. Note also that we work with the space $H^s(\partial\Omega)$ which is not only a fractional Sobolev space but also is a function space on a closed subset of $\R^d$ which has non-empty interior. In this appendix we aim at providing some results from the literature about Sobolev/Besov spaces and their relation to interpolation spaces which is necessary to follow the arguments from our article.

Of exceptional importance for the proof of Theorem \ref{thm: Preliminary estimates} (i) and (ii) is the validity of the borderline trace theorem - Proposition \ref{thm: Trace}. This borderline case seems to be well-known to the experts - also for Lipschitz domains - but unfortunately we were not able to find it in the literature except in \cite[Theorem 18.6]{Triebel}. But the proof given there is not in our spirit - the Besov spaces are not defined as interpolation spaces there. Therefore we give a simple direct proof via the characterization of Besov spaces as interpolation spaces which is true if $\Omega$ has the so called extension property.

\subsection{Fractional Sobolev- and Besov spaces}\label{apx: fractional Sobolev and Besov spaces}
Let $\Omega\subset\R^d$ be a bounded Lipschitz domain. Here by \emph{Lipschitz} we mean that locally near any boundary point and in an appropriate coordinate system one can describe $\Omega$ as the set of points which are above the graph of some Lipschitz continuous function from $\R^{d-1}$ into $\R$.

Let $1\leq p\leq \infty$. We assume the reader to be familiar with the usual \emph{Sobolev space} $W^{1,p}(\Omega)$ which consists of all functions $u\in L^p(\Omega)$ for which all distributional derivatives $\partial_ju$ are in $L^p(\Omega)$. There are different methods of defining Besov spaces. For our purposes it is most convenient to define the \emph{Besov spaces} for $0<s<1$ and $1\leq q\leq \infty$ as real interpolation spaces:
\begin{equation}\
 B^{s,p}_q(\Omega) = (L^p(\Omega), W^{1,p}(\Omega))_{s, q} .
\end{equation}
Another approach is to define $B^{s,p}_q(\R^d)$ for example via interpolation and then to define the Besov space on $\Omega$ as restrictions to $\Omega$ of Besov function on $\R^d$. In general these approaches are not equivalent but if $\Omega$ satisfies the extension property they are equivalent \cite[Chapter 34]{Tartar}. In our setting ($0<s<1$) we say that $\Omega$ satisfies the \emph{extension property} if there is a linear and continuous operator $\Ext:W^{1,p}(\Omega)\rightarrow W^{1,p}(\R^d)$ such that $(\Ext u)|_{\Omega}=u$ for each $u$ from $W^{1,p}(\Omega)$. The extension property is fulfilled if $\Omega$ is bounded and has a Lipschitz boundary. In the following we always assume that this extension property is fulfilled - otherwise some statements from below are not valid.

The \emph{Sobolev-Slobodeckij spaces} are defined as special Besov spaces $W^{s,p}(\Omega)=B^{s,p}_p(\Omega)$. It is common to write $H^s$ instead of $W^{s,2}$ in the Hilbert space setting. For $0\leq s \leq 1$ it is also possible to define the scale of \emph{fractional Sobolev spaces} (also known as \emph{Bessel potential spaces}) $H^{s,p}(\Omega)$ via Fourier methods for the special case $\Omega=\R^d$ and via restriction for the general case. These spaces form a scale of complex interpolation spaces. In general the fractional Sobolev spaces differ from the Sobolev-Slobodeckij spaces but coincide in the case $p=2$ (see \cite[Chapter 7.67]{AdamsFournier}. Note that in Adam's and Fournier's book the letter $W$ stands for the fractional Sobolev spaces. We also have $H^{1,p}(\Omega)=W^{1,p}(\Omega)$ for $1<p<\infty$ - which is Calder\'on's Theorem (see \cite[page 7]{JonssonWallin}).

We mention that for all $0<s_1\leq s<1$ and $q,q_1\in [1,\infty]$ with the restriction $q\leq q_1$ if $s_1=s$:
\begin{align*}
 B^{s, p}_q(\Omega) \hookrightarrow B^{s_1, p}_{q_1}(\Omega) .
\end{align*}
This is a direct consequence of a general result about the real interpolation method (see e.g. \cite[Lemma 22.2]{Tartar}).

It is possible to define the Besov space $B^{s,p}_q(A)$ on a general class of closed subsets $A$ of $\R^d$ - the so called \emph{$d$-sets}. For $\Omega$ having a Lipschitz boundary its boundary $\partial\Omega$ is such a set, since it is a $d-1$ dimensional manifold topologically. The required background is included in \cite[Chapter V]{JonssonWallin}. Again we write $H^s(A)=B^{s, 2}_2(A)$ in the Hilbert space setting.

\subsection{Traces for functions with $1/p$ or more derivatives}
Throughout this subsection $\Omega\subseteq\R^d$ is a bounded domain with Lipschitz boundary and we let $1<p<\infty$. For $1/p<s<1$ the following Theorem is a special case of \cite[Chapter VI, Theorem 1-3]{JonssonWallin}. For $s=1$ it is a special case of \cite[Chapter VII, Theorem 1-3]{JonssonWallin}, keeping in mind that by Calder\'on's Theorem the Bessel potential spaces are the ordinary Sobolev spaces for positive integer orders $s$.
\begin{Theorem}
 Let $1/p<s<1$. Then the trace operator $\Gamma: C(\overline{\Omega})\rightarrow C(\partial\Omega), u\mapsto u|_{\partial\Omega}$ extends continuously to an operator
 \begin{equation}\nonumber
  \Gamma:B^{s,p}_{q}(\Omega)\rightarrow B^{s-\frac{1}{p}, p}_q(\partial\Omega). 
 \end{equation}
Furthermore $\Gamma$ has a continuous right inverse:
 \begin{equation}\nonumber
  \Ext:B^{s-\frac{1}{p}, p}_q(\partial\Omega)\rightarrow B^{s,p}_{q}(\Omega),\quad \Gamma\circ\Ext = \id_{B^{s-\frac{1}{p}, p}_q(\partial\Omega)} .
 \end{equation}
 The theorem remains valid for $s=1$, $q=p$ if one replaces $B^{s,p}_q(\Omega)$ by $W^{1,p}(\Omega)$.
\end{Theorem}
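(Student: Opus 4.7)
The strategy is to localize, flatten, and reduce to a model problem on the half space $\R^d_+=\{x_d>0\}$ with boundary $\R^{d-1}$. The Lipschitz hypothesis gives a finite atlas in which $\partial\Omega$ is, after composition with a bi-Lipschitz flattening map, locally a piece of $\R^{d-1}$; such a pullback preserves $L^p$ and $W^{1,p}$ up to equivalent norms, hence by functoriality of the real interpolation method also the Besov spaces $B^{s,p}_q$, and similarly on $\partial\Omega$. Combining this with a subordinate partition of unity and the $W^{1,p}$-extension operator guaranteed by the extension property, the theorem reduces to the corresponding statement on $(\R^d_+,\R^{d-1})$.

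On the half space I would first treat the integer endpoint $s=1$, $q=p$. For smooth $u$ with compact support in $\overline{\R^d_+}$, the classical Gagliardo identity combined with H\"older's inequality yields
\begin{equation*}
\int\!\!\int_{\R^{d-1}\times\R^{d-1}}\frac{\abs{u(x',0)-u(y',0)}^p}{\abs{x'-y'}^{d-2+p}}\,dx'\,dy' \leq C\norm{u}_{W^{1,p}(\R^d_+)}^p,
\end{equation*}
which is precisely $\norm{\Gamma u}_{B^{1-1/p,p}_p(\R^{d-1})}\leq C\norm{u}_{W^{1,p}(\R^d_+)}$. For the right inverse, define the Poisson-type averaging
\begin{equation*}
(Ef)(x',x_d)=\int_{\R^{d-1}}\phi\!\left(\frac{x'-y'}{x_d}\right) f(y')\,\frac{dy'}{x_d^{d-1}}
\end{equation*}
with $\phi\in C_c^{\infty}(\R^{d-1})$ and $\int\phi=1$. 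Differentiating under the integral and rescaling gives a bounded operator $E:B^{1-1/p,p}_p(\R^{d-1})\to W^{1,p}(\R^d_+)$, and $\Gamma E=\id$ follows from $(Ef)(x',x_d)\to f(x')$ as $x_d\downarrow 0$.

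For the fractional range $1/p<s<1$ the naive attempt to interpolate between $L^p$ and $W^{1,p}$ fails because the trace is not defined on $L^p$. Instead, the same operator $E$ is shown to be bounded $B^{s-1/p,p}_q(\R^{d-1})\to B^{s,p}_q(\R^d_+)$ throughout the range via a direct $K$-functional computation in the pair $(L^p(\R^d_+),W^{1,p}(\R^d_+))$: for each $t>0$ one decomposes $f=f_{\leq t}+f_{>t}$ at frequency $1/t$ on the boundary and estimates $\norm{Ef_{\leq t}}_{L^p}$ and $\norm{Ef_{>t}}_{W^{1,p}}$ separately. Boundedness of $\Gamma$ itself is obtained by an analogous direct $K$-functional estimate, controlling the $K$-functional of $\Gamma u$ in $(L^p(\R^{d-1}),W^{1-1/p,p}(\R^{d-1}))$ by the $K$-functional of $u$ in $(L^p(\R^d_+),W^{1,p}(\R^d_+))$ via the Gagliardo identity applied to each dyadic piece of $u$. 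The identity $\Gamma E=\id$ persists, giving the claimed right inverse.

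The main obstacle is reconciling the paper's definition $B^{s,p}_q(\Omega)=(L^p(\Omega),W^{1,p}(\Omega))_{s,q}$ with the intrinsic finite-difference Besov space on $\R^d_+$, so that the Poisson-type extension and the Gagliardo-type estimates, which are naturally phrased in terms of the intrinsic norm, transfer to the interpolation norm. This equivalence is precisely where the extension property enters; without it the two scales can genuinely differ, and the entire reduction to the half space breaks down.
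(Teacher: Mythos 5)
The paper does not prove this theorem; it is the one item in the appendix handled purely by citation (to Jonsson and Wallin, Chapter VI resp.\ VII, Theorem 1--3, together with Calder\'on's theorem to identify $H^{1,p}=W^{1,p}$ at the integer endpoint). Your proposal is therefore a genuinely different route: a self-contained proof sketch rather than an attribution. The overall strategy you describe---localize and flatten via bi-Lipschitz charts, prove the $W^{1,p}$ endpoint via the Gagliardo seminorm estimate and a truncated Poisson-type averaging extension, and obtain the fractional range by dyadic/$K$-functional computations inside the couple $(L^p,W^{1,p})$---is the standard way such trace theorems are proved, and if carried out would recover the result. The trade-off is clear: the citation is far shorter and suffices for a paper whose focus lies elsewhere, whereas a complete version of your argument would make the appendix self-contained at the cost of several pages.

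Two steps in your sketch do not quite go through as written and would need real work. First, the claim to control $K\bigl(t,\Gamma u; L^p(\R^{d-1}),W^{1-1/p,p}(\R^{d-1})\bigr)$ directly by $K\bigl(t,u; L^p(\R^d_+),W^{1,p}(\R^d_+)\bigr)$ is not meaningful as stated: an optimal decomposition $u=u_0+u_1$ with $u_0\in L^p(\R^d_+)$ gives no trace of $u_0$, so it does not induce a decomposition of $\Gamma u$. What does work, and is presumably what you intend with the ``dyadic piece'' remark, is a Littlewood--Paley argument: write $u=\sum_j u_j$ with $u_j$ band-limited at scale $2^j$, apply the borderline trace inequality to each $u_j$ to get $\norm{\Gamma u_j}_{L^p}\leq C 2^{j/p}\norm{u_j}_{L^p}$, observe that $\Gamma u_j$ has frequency support only up to $\sim 2^j$ on $\R^{d-1}$, and sum; the overlap of low-frequency contributions is harmless precisely because $s-1/p>0$. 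But this requires the equivalence of the interpolation and Littlewood--Paley (or finite-difference) descriptions of $B^{s,p}_q$ on the half-space, which is exactly the equivalence you flag at the end as the ``main obstacle''---so these two issues are really one and must be handled with care rather than deferred. Second, $B^{s-1/p,p}_q(\partial\Omega)$ in the paper is the Jonsson--Wallin Besov space on a $(d-1)$-set, not a priori a pullback of the Euclidean space $B^{s-1/p,p}_q(\R^{d-1})$; verifying that a bi-Lipschitz flattening of a Lipschitz graph identifies the two (surface measure, covering by balls, and finite differences all transform correctly under a map with merely $L^\infty$ Jacobian) is itself a non-trivial piece of the Jonsson--Wallin machinery which your reduction assumes implicitly.
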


Unfortunately this theorem is false for any $1\leq q\leq \infty$ in the borderline case $s=1/p$ if one replaces the target space of $\Gamma$ by $L^p(\partial\Omega)$. But for our purposes it is sufficient that a weakened version remains valid.

\begin{Proposition}\label{thm: Trace}
 The trace operator $\Gamma: B^{\frac{1}{p},p}_1(\Omega)\rightarrow L^p(\partial\Omega)$ is continuous.
\end{Proposition}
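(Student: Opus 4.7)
The plan is to combine a Gagliardo--Nirenberg--type interpolation inequality
\begin{equation}\label{eq: plan interpolation}
 \|\Gamma u\|_{L^p(\partial\Omega)}^p \leq C\|u\|_{L^p(\Omega)}^{p-1}\|u\|_{W^{1,p}(\Omega)}\qquad (u\in W^{1,p}(\Omega))
\end{equation}
with the $J$--functional representation of the real interpolation space $B^{1/p,p}_1(\Omega)=(L^p(\Omega),W^{1,p}(\Omega))_{1/p,1}$. The underlying heuristic is that \eqref{eq: plan interpolation} says $\Gamma$ gains only a half derivative ``from the $L^p$--side'', which is precisely the regularity needed to survive the integration in the $J$--integral at the critical exponent $\theta=1/p$.

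First I would establish \eqref{eq: plan interpolation}. By a finite cover of $\partial\Omega$, a Lipschitz partition of unity, and a bi--Lipschitz straightening of the boundary, the problem reduces to the half--space case, where for a smooth $u$ and $h>0$ small one writes $u(x',0)=u(x',h)-\int_0^h\partial_d u(x',t)\,dt$. Taking $p$--th powers, applying H\"older, integrating over $x'$ on a local patch and averaging over $h\in(0,h_0)$ gives
\[
 \int_{\{x_d=0\}}|u(x',0)|^p\,dx' \;\leq\; \frac{C}{h_0}\|u\|_{L^p}^p+Ch_0^{p-1}\|u\|_{W^{1,p}}^p,
\]
and optimizing in $h_0\sim\|u\|_{L^p}/\|u\|_{W^{1,p}}$ (truncated by a fixed geometry--dependent constant, the large--$h_0$ regime being trivial) yields \eqref{eq: plan interpolation}.

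Second, I would invoke the equivalence of the $K$-- and $J$--methods of real interpolation (standard, e.g.\ \cite[Chapter 26]{Tartar}): any $u\in B^{1/p,p}_1(\Omega)$ admits a representation $u=\int_0^\infty v(t)\,\tfrac{dt}{t}$ with $v(t)\in L^p\cap W^{1,p}$ and
\[
 \int_0^\infty t^{-1/p}J(t,v(t))\,\frac{dt}{t}\;\leq\; C\|u\|_{B^{1/p,p}_1(\Omega)},\qquad J(t,v)=\max(\|v\|_{L^p},t\|v\|_{W^{1,p}}).
\]
Since each $v(t)$ lies in $W^{1,p}$, applying $\Gamma$ to the integrand is legitimate. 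The inequality \eqref{eq: plan interpolation} combined with the elementary bounds $\|v(t)\|_{L^p}\leq J(t,v(t))$ and $\|v(t)\|_{W^{1,p}}\leq t^{-1}J(t,v(t))$ gives
\[
 \|\Gamma v(t)\|_{L^p(\partial\Omega)}\;\leq\; C\|v(t)\|_{L^p}^{1-1/p}\|v(t)\|_{W^{1,p}}^{1/p}\;\leq\; Ct^{-1/p}J(t,v(t)).
\]
Integrating in $t$ against $dt/t$ and using the triangle inequality for $L^p(\partial\Omega)$--valued integrals yields $\|\Gamma u\|_{L^p(\partial\Omega)}\leq C\|u\|_{B^{1/p,p}_1(\Omega)}$ as desired.

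The main potential obstacle is the bookkeeping in the localization step for the interpolation inequality \eqref{eq: plan interpolation}: one must ensure that the constant $C$ and the cutoff length $h_0$ may be chosen uniformly over a finite atlas of Lipschitz charts, and that the tangential integrals from different patches glue back to the surface measure on $\partial\Omega$ up to bounded Jacobians. Both are standard by compactness of $\partial\Omega$ and the Lipschitz hypothesis, so no serious difficulty arises.
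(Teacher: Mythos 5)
Your proposal is correct and follows essentially the same route as the paper: both rest on the Gagliardo--Nirenberg trace inequality $\|\Gamma u\|_{L^p(\partial\Omega)}\leq C\|u\|_{L^p}^{1-1/p}\|u\|_{W^{1,p}}^{1/p}$ together with the abstract principle that such a $\theta$-weighted bound on $E_0\cap E_1$ extends to the real interpolation space $(E_0,E_1)_{\theta,1}$. The only difference is cosmetic: where the paper cites Tartar's Lemmas 13.1 and 25.3 as black boxes, you unfold their proofs (half-space localization for the trace inequality, the $J$-method representation for the interpolation step), which is precisely how those lemmas are proved.
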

Actually $\Gamma$ from this proposition is indeed surjective (but we do not need this property in our article) and a more general version is proved in \cite[Section 18.6]{Triebel}. However there is no \emph{linear} extension operator from $L^p(\partial\Omega)$ back to the Besov space (See \cite{Triebel} and references therein).

We indicate a simple direct proof of Proposition \ref{thm: Trace}. It is based on two lemmas which have very simple proofs on their own. The first one is
\begin{Lemma}\label{thm: trace inequality}
 For every $C^{\infty}$ function $u$ with compact support in $\R^d$ it is true that
 \begin{equation}\nonumber
  \norm{\Gamma u}_{L^p(\partial\Omega)} \leq C\norm{u}_{L^p(\Omega)}^{1-\frac{1}{p}}\norm{u}_{W^{1,p}(\Omega)}^{\frac{1}{p}}.
 \end{equation}
\end{Lemma}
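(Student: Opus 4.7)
The plan is to reduce to the model situation of a half-space via a standard partition of unity / boundary-flattening argument, then derive the multiplicative inequality from a pointwise fundamental-theorem-of-calculus identity combined with Hölder's inequality.

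First I would choose a finite open cover $\{U_j\}$ of $\partial\Omega$ such that in each $U_j$, after a bi-Lipschitz change of variables $\Phi_j : U_j \to B$ (where $B$ is the unit ball), the set $\Phi_j(U_j \cap \Omega)$ equals $\{y \in B : y_d > 0\}$ and $\Phi_j(U_j \cap \partial\Omega) = \{y \in B : y_d = 0\}$. Such charts exist because $\partial\Omega$ is Lipschitz. Let $\{\eta_j\}$ be a smooth partition of unity on a neighbourhood of $\partial\Omega$ subordinate to $\{U_j\}$. Since bi-Lipschitz maps preserve $L^p$ and $W^{1,p}$ norms up to multiplicative constants depending only on the Lipschitz constants and the Jacobians (which are bounded and bounded below a.e.), it is enough to prove the inequality for $v := (\eta_j u) \circ \Phi_j^{-1}$, which is supported in $B$ and vanishes on $\partial B \cap \{y_d > 0\}$, with boundary trace on $\{y_d = 0\}$.

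For such a half-space $v$ the key identity is the fundamental theorem of calculus applied to $|v|^p$: for a.e.\ $y' \in \R^{d-1}$,
\begin{equation*}
 |v(y',0)|^p = -\int_0^\infty \partial_{y_d}|v(y',y_d)|^p \, dy_d
  = -p\int_0^\infty |v|^{p-2}\operatorname{Re}\bigl(\overline{v}\,\partial_{y_d}v\bigr)(y',y_d)\,dy_d.
\end{equation*}
Taking absolute values, integrating in $y'$ and using Fubini gives
\begin{equation*}
 \int_{\R^{d-1}} |v(y',0)|^p\, dy' \;\leq\; p \int_{\{y_d>0\}} |v|^{p-1}\,|\nabla v|\, dy.
\end{equation*}
Hölder's inequality with exponents $p/(p-1)$ and $p$ on the right-hand side yields
\begin{equation*}
 \|v(\cdot,0)\|_{L^p(\R^{d-1})}^{p} \;\leq\; p\,\|v\|_{L^p(\{y_d>0\})}^{p-1}\,\|\nabla v\|_{L^p(\{y_d>0\})},
\end{equation*}
which is exactly the desired inequality for $v$ after taking $p$-th roots.

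Finally I would sum over $j$: by the triangle inequality and the equivalence of norms under the bi-Lipschitz charts, $\|\Gamma u\|_{L^p(\partial\Omega)} \leq C \sum_j \|\Gamma(\eta_j u)\|_{L^p(\partial\Omega)}$, and each summand is controlled by $C\|\eta_j u\|_{L^p(\Omega)}^{1-1/p}\|\eta_j u\|_{W^{1,p}(\Omega)}^{1/p} \leq C\|u\|_{L^p(\Omega)}^{1-1/p}\|u\|_{W^{1,p}(\Omega)}^{1/p}$, the last step using that the $\eta_j$ are smooth with bounded derivatives. The main (mild) obstacle is bookkeeping the Lipschitz chart changes; the analytic content is really contained in the one-line identity above, and the Lipschitz-only regularity of the charts is harmless because the $L^p$ and $W^{1,p}$ scales transform well under bi-Lipschitz maps.
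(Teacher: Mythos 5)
Your proof is correct and is essentially the standard argument behind the paper's cited reference: the paper does not prove this lemma itself but points to \cite[Lemma 13.1]{Tartar}, which proceeds exactly as you do, via boundary flattening, the fundamental theorem of calculus applied to $|v|^p$ along the normal direction, and H\"older's inequality. One small point worth making explicit: after the bi-Lipschitz change of variables $v=(\eta_j u)\circ\Phi_j^{-1}$ is merely Lipschitz, not $C^1$, so the line-by-line FTC identity should be justified by absolute continuity of $W^{1,p}$ functions on almost every line parallel to $e_d$ (and the chain rule for $|v|^p$ in $W^{1,p}$), but this is routine and does not affect the argument.
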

The straightforward proof can be found in \cite[Lemma 13.1]{Tartar}. For a different proof in the case $p=2$ we refer to \cite{Monniaux2014}. The second ingredient to the proof of Proposition \ref{thm: Trace} is \cite[Lemma 25.3]{Tartar} which we recall here for convenience of the reader.
\begin{Lemma}\label{thm: interpolation lemma}
 Let $(E_0,E_1)$ be an interpolation couple, $F$ a Banach space and let $0<\theta<1$. Then a linear mapping $L:E_0\cap E_1\rightarrow F$ extends to a continuous operator $L:(E_0,E_1)_{\theta, 1}\rightarrow F$ if and only if there exists a $C>0$ such that for all $u\in E_0\cap E_1$ we have $\norm{Lu}_{E_0 \cap E_1}\leq C\norm{u}_{E_0}^{1-\theta}\norm{u}_{E_1}^{\theta}$.
\end{Lemma}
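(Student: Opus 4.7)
The plan is to work from the $K$-method definition of the real interpolation space: for $u\in E_0+E_1$ set $K(t,u) = \inf\{\norm{u_0}_{E_0} + t\norm{u_1}_{E_1} : u = u_0+u_1\}$, so that $\norm{u}_{\theta,1} \asymp \int_0^{\infty} t^{-\theta} K(t,u)\,\frac{dt}{t}$. The ``only if'' direction is then a one-line computation: since $u\in E_0\cap E_1$ gives the trivial decomposition bounds $K(t,u)\leq \min(\norm{u}_{E_0}, t\norm{u}_{E_1})$, splitting the integral at $t=\norm{u}_{E_0}/\norm{u}_{E_1}$ produces
\begin{equation*}
 \norm{u}_{\theta,1} \;\leq\; \frac{C}{\theta(1-\theta)}\norm{u}_{E_0}^{1-\theta}\norm{u}_{E_1}^{\theta}.
\end{equation*}
Combining this with the assumed boundedness $\norm{Lu}_F \leq C\norm{u}_{\theta,1}$ yields the desired estimate.

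For the ``if'' direction I would invoke the equivalent $J$-method description of $(E_0,E_1)_{\theta,1}$: every $u$ in that space admits a representation $u = \int_0^{\infty} v(t)\,\frac{dt}{t}$ with $v(t)\in E_0\cap E_1$ (and the integral converging in $E_0+E_1$) such that
\begin{equation*}
 \int_0^{\infty} t^{-\theta} J(t,v(t))\,\frac{dt}{t} \;\leq\; C\norm{u}_{\theta,1}, \qquad J(t,w) := \max(\norm{w}_{E_0}, t\norm{w}_{E_1}).
\end{equation*}
Since each $v(t)$ lies in the domain of $L$, I can push $L$ inside the integral. The assumed inequality applied to $v(t)$ gives
\begin{equation*}
 \norm{Lv(t)}_F \;\leq\; C\norm{v(t)}_{E_0}^{1-\theta}\norm{v(t)}_{E_1}^{\theta} \;\leq\; C\, t^{-\theta} J(t,v(t)),
\end{equation*}
where the last inequality is immediate from the definition of $J$. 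Integrating against $dt/t$ and taking the infimum over all $J$-representations of $u$ produces $\norm{Lu}_F \leq C\norm{u}_{\theta,1}$, and the bounded linear extension to all of $(E_0,E_1)_{\theta,1}$ then follows by density of $E_0\cap E_1$ in the interpolation space.

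The main technical ingredient one has to rely on (rather than reprove) is the equivalence of the $K$- and $J$-methods, which is where the existence of the above integral decomposition comes from; this is a classical result of Peetre and is the only non-routine input. The rest is bookkeeping with the two elementary inequalities $K(t,u)\leq \min(\norm{u}_{E_0},t\norm{u}_{E_1})$ and $\norm{w}_{E_0}^{1-\theta}\norm{w}_{E_1}^{\theta} \leq t^{-\theta}J(t,w)$, together with the trivial passage of $L$ under the Bochner integral defining the $J$-representation.
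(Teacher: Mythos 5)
The paper does not actually prove this lemma: it is cited directly from Tartar's lecture notes \cite[Lemma 25.3]{Tartar}, so there is no in-house proof to compare against. Your proposal therefore has to stand on its own.

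Your "only if" direction is fine and elementary: the bound $K(t,u)\leq\min(\norm{u}_{E_0},t\norm{u}_{E_1})$ for $u\in E_0\cap E_1$, the split of the integral at $t_0=\norm{u}_{E_0}/\norm{u}_{E_1}$, and the resulting $\norm{u}_{\theta,1}\leq\tfrac{1}{\theta(1-\theta)}\norm{u}_{E_0}^{1-\theta}\norm{u}_{E_1}^{\theta}$ are correct and standard.

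The "if" direction, however, has a genuine gap, and it sits exactly where you write ``I can push $L$ inside the integral.'' You have a $J$-representation $u=\int_0^\infty v(t)\,\frac{dt}{t}$, but this integral converges only in $E_0+E_1$, whereas $L$ is only given to be bounded from $E_0\cap E_1$ (equipped with the intersection norm, since the assumed multiplicative bound implies $\norm{Lw}_F\leq C\norm{w}_{E_0\cap E_1}$) into $F$. Nothing in the hypotheses makes $L$ continuous with respect to the $E_0+E_1$ topology, so you cannot conclude $Lu=\int_0^\infty Lv(t)\,\frac{dt}{t}$ by ``domain considerations'' alone. This is not a cosmetic issue: if you try to repair it by truncating and passing to the limit, the truncated errors $u-\int_\epsilon^{1/\epsilon}v(t)\,\frac{dt}{t}$ go to zero in $E_0+E_1$ but not, in general, in $E_0\cap E_1$, so $\norm{L(\,\cdot\,)}_F$ need not vanish along them. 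The same difficulty reappears if one instead defines the candidate extension $\tilde L u=\int_0^\infty Lv(t)\,\frac{dt}{t}$ directly on $(E_0,E_1)_{\theta,1}$: then one must prove well-definedness, i.e.\ that $\int_0^\infty v(t)\,\frac{dt}{t}=0$ in $E_0+E_1$ forces $\int_0^\infty Lv(t)\,\frac{dt}{t}=0$ in $F$, which is precisely the same consistency question. Note also that the naive fallback ``prove $\norm{Lu}_F\leq C\norm{u}_{\theta,1}$ on $E_0\cap E_1$ and extend by density'' cannot be reduced to the hypothesis by an inequality between $\norm{u}_{E_0}^{1-\theta}\norm{u}_{E_1}^{\theta}$ and $\norm{u}_{\theta,1}$, since your own ``only if'' computation shows the inequality goes the wrong way. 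The intended path is indeed via the $J$-method, but the consistency of the extension is the one non-routine step, and your write-up treats it as a triviality. I would either supply the argument establishing well-definedness of $\tilde L$ (and that it restricts to $L$ on $E_0\cap E_1$), or simply cite the result as the paper does.
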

\begin{proof}[Proof of Proposition \ref{thm: Trace}]
Apply the if-part of Lemma \ref{thm: interpolation lemma} to $E_0=L^p(\Omega)$, $E_1=W^{1,p}(\Omega)$, $F=L^p(\partial\Omega)$, $L=\Gamma$ and $\theta=s$. Use Lemma \ref{thm: trace inequality} to verify the converse. 
\end{proof}

\section{Semiuniform decay of bounded semigroups}\label{apx: semiuniform stability}
We briefly recall three important results connecting resolvent estimates of generators to the decay rate of their corresponding semigroups. In addition to the literature mentioned below we recommend the reader to consult \cite{BattyChillTomilov2016} for a general overview and finer results.

Let $X$ be a Banach space and $B(X)$ the algebra of bounded operators acting on $X$. Throughout this section we assume that $-\A$ is a the generator of a bounded $C_0$-semigroup $T:[0,\infty)\rightarrow B(X)$. By $D(\A), R(A)$ we denote domain and range of $\A$ and by $\sigma(\A)$ its spectrum.

\subsection{Singularity at infinity}
The phrase \emph{``Singularity at infinity''} refers to the situation when the resolvent of $(is+\A)^{-1}$ of $\A$ has no poles on the imaginary axis but is allowed to blow up in operator norm if $s$ tends to infinity. The following theorem is due to Batty and Duyckaerts \cite{BattyDuyckaerts2008} but we also refer to \cite{ChillSeifert2016} for a different proof and to \cite{BattyBorichevTomilov2016} for a generalization. 

\begin{Theorem}[\cite{BattyDuyckaerts2008}]\label{thm: Batty-Duyckaerts}
 Assume that $\sigma(\A)\cap i\R=\emptyset$ and that there exist constants $C, s_0 > 0$ and an increasing function $M:[s_0,\infty)\rightarrow[1,\infty)$ such that 
 \begin{align}\label{eq: resolvent estimate at infinity}
  \forall \abs{s}\geq s_0: \norm{(is+\A)^{-1}}_{X\rightarrow X} \leq C M(C\abs{s}).
 \end{align}
 Then there exist constants $C,t_0 > 0$ such that
 \begin{align*}
  \forall t\geq t_0, \xnice_0 \in D(\A): \norm{T(t)\xnice_0}_{X} \leq \frac{C}{M_{log}^{-1}(\frac{t}{C})}\norm{\xnice_0}_{D(\A)}.
 \end{align*}
 Here $M_{log}(s) = M(s)(\log(1+M(s))+\log(1+s))$.
\end{Theorem}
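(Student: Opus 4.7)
The plan is a quantitative contour integral argument in the spirit of the original Batty--Duyckaerts proof. First I would use a Neumann series perturbation to extend the resolvent off the imaginary axis: for $z = is + \zeta$ with $\abs{\zeta}\cdot CM(C\abs{s}) \leq 1/2$ and $\abs{s}\geq s_0$, the series $(z+\A)^{-1} = (is+\A)^{-1}\sum_{n\geq 0}\bigl(-\zeta (is+\A)^{-1}\bigr)^n$ converges and gives $\norm{(z+\A)^{-1}}\leq 2CM(C\abs{s})$. Setting $h(r) := 1/(2C^2M(Cr))$ for $r\geq s_0$, this shows that $(z+\A)^{-1}$ extends analytically to $\{z:\Re z > -h(\abs{\Im z})\}$ with the same upper bound, and by the spectral hypothesis it also extends across the compact piece $\{\abs{\Im z}\leq s_0\}$.

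Second, I would represent $T(t)\xnice_0$ by a contour integral. For $\xnice_0\in D(\A)$ the identity $(z+\A)^{-1}\xnice_0 = z^{-1}\xnice_0 - z^{-1}(z+\A)^{-1}\A\xnice_0$ gives the resolvent an extra $\abs{z}^{-1}$ decay at $\infty$. Starting from the Laplace representation $(z+\A)^{-1}\xnice_0 = \int_0^\infty e^{-zt}T(t)\xnice_0\,dt$ (valid for $\Re z > 0$ by boundedness of $T$), regularizing by $e^{-\varepsilon t}$ and applying the Fourier inversion theorem to the resulting $L^1\cap L^\infty$ function, I would pass to the limit $\varepsilon\downarrow 0$ to obtain
\begin{equation*}
  T(t)\xnice_0 = \frac{1}{2\pi i}\int_{-i\infty}^{i\infty} e^{zt}(z+\A)^{-1}\xnice_0\,dz,
\end{equation*}
where convergence is secured by the above decomposition plus integration by parts.

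Third, I would deform the contour via Cauchy's theorem. For a parameter $R\geq s_0$, replace the segment $[-iR,iR]$ by a piecewise-smooth path $\Gamma_R$ lying in $\{-h(R)\leq\Re z\leq 0\}$ and consisting of two short horizontal segments at heights $\pm iR$ together with the vertical segment $\Re z = -h(R)$, $\abs{\Im z}\leq R$. The deformation is legal by analyticity of the resolvent in the extended region. On $\Gamma_R$ one has $\abs{e^{zt}}\leq e^{-h(R)t}$, length $\lesssim R$, and resolvent norm $\lesssim M(R)$, so that piece contributes $\lesssim R\,M(R)\,e^{-h(R)t}\norm{\xnice_0}_{D(\A)}$. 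The two remaining tails along the imaginary axis for $\abs{\Im z}>R$ contribute $\lesssim R^{-1}\norm{\xnice_0}_{D(\A)}$ via a single integration by parts against $e^{zt}$, using that the primitive $z^{-1}e^{zt}$ gains $\abs{z}^{-1}$ and the boundary terms at $\pm iR$ are $O(M(R)/R)\norm{\xnice_0}_{D(\A)}$.

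Finally, I would optimize the free parameter $R$. Balancing the two contributions amounts to solving $h(R)t \asymp \log\bigl(1+RM(R)\bigr)$, i.e.\ $t \asymp M(R)\bigl(\log(1+M(R))+\log(1+R)\bigr)$, which is exactly $t = M_{\log}(R)$ up to constants; inverting gives $R = M_{\log}^{-1}(t/C)$ and the resulting decay $C/M_{\log}^{-1}(t/C)$. The main obstacle I anticipate is rigorously justifying the contour-integral representation above for a merely bounded (not exponentially stable) semigroup: this forces the regularize-and-pass-to-the-limit argument together with the $D(\A)$-assumption on $\xnice_0$, and the book-keeping of multiplicative constants through the Neumann-series step has to be done carefully so that the factor $C$ appearing in the hypothesis $CM(C\abs{s})$ is absorbed consistently into the final constants.
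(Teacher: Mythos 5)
The paper does not prove this theorem: it is recalled verbatim from Batty--Duyckaerts \cite{BattyDuyckaerts2008} in Appendix B with no proof (the appendix opens ``We briefly recall three important results\ldots'' and also points to \cite{ChillSeifert2016} for an alternative proof), so there is no in-paper argument to compare yours against.

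Taken on its own merits, your sketch does capture the correct broad structure --- extend the resolvent by Neumann series into a left half-strip of width $h(r)\asymp 1/M(Cr)$, deform a Laplace-inversion contour, and balance the two pieces at $R=M_{\log}^{-1}(t/C)$. The balance calculation at the end is also right. The genuine gap is in Step~2/3, where you write $T(t)\xnice_0$ as $\frac{1}{2\pi i}\int_{-i\infty}^{i\infty}e^{zt}(z+\A)^{-1}\xnice_0\,dz$ and dispose of the tails $\abs{\Im z}>R$ ``via a single integration by parts.'' With $\xnice_0\in D(\A)$ the integrand on the imaginary axis has norm of order $M(\abs{s})/\abs{s}$, which need not tend to zero (for a bounded semigroup $M$ may grow arbitrarily fast), so the boundary term at $+i\infty$ need not vanish and the integral is not even conditionally convergent in any obvious sense. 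Integrating by parts makes this worse, not better: the post-IBP integrand involves $(z+\A)^{-2}$, whose norm is of order $M(\abs{s})^2$, so the new integral over $\abs{s}>R$ also fails to converge. The boundary term you do write down, $O(M(R)/R)$, is not $O(R^{-1})$ unless you feed in the eventual choice of $R$, and in any case the divergence of the remaining integral is untouched. This is precisely the obstruction that forces the actual proofs (Batty--Duyckaerts, and the Chill--Seifert variant cited in the paper) to introduce a Newman--Korevaar type multiplier such as $1+z^2/R^2$ (or a smooth Fourier cutoff / mollifier), which vanishes at $z=\pm iR$, kills the problematic boundary contributions, and renders the truncated integral both well-defined and controllable. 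Without that device the contour representation and the tail estimate do not go through, so the sketch as written cannot be completed into a proof.
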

It is comparatively easy to see that a \emph{semiuniform} decay rate for $T$ as in the conclusion of the Batty-Duyckaerts theorem implies that $\A$ has no spectrum on the imaginary axis. Furthermore if the semigroup decays at least like the decreasing function $m:[0,\infty)\rightarrow(0,\infty)$ then the resolvent can not grow faster than the function $M_1$ at infinity, where $M_1(s)=1+m_r^{-1}(1/(2\abs{s}+1))$ (see \cite[Proposition 1.3]{BattyDuyckaerts2008}). Here $m_r^{-1}$ denotes the right inverse of a decreasing function.

Note that for $M(s)=\abs{s}^{\gamma}$ with $\gamma>0$, this theorem tells us that the decay rate is estimated from above by $(\log(t)/t)^{1/\gamma}$. One may wonder if the logarithmic term is necessary. In general it is, as was shown in \cite{BorichevTomilov2010}, but in the same article one can find the following nice characterization of polynomial decay rates in the Hilbert space setting:

\begin{Theorem}[\cite{BorichevTomilov2010}]\label{thm: Borichev-Tomilov}
 Let $X$ be a Hilbert space and $\gamma>0$. Assume that $\sigma(\A)\cap i\R=\emptyset$ and that there exist constants $C, s_0 > 0$ such that 
 \begin{align*}
  \forall \abs{s}\geq s_0: \norm{(is+\A)^{-1}}_{X\rightarrow X} \leq C \abs{s}^{\gamma}.
 \end{align*}
 Then there exist constants $C,t_0 > 0$ such that
 \begin{align*}
  \forall t\geq t_0, \xnice_0\in D(\A): \norm{T(t)\xnice_0}_{X} \leq C t^{-\frac{1}{\gamma}}\norm{\xnice_0}_{D(\A)}.
 \end{align*}
\end{Theorem}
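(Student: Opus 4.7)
The plan is to establish the polynomial decay rate via a quantitative Gearhart--Pr\"uss argument that exploits the Hilbert-space structure through Plancherel's theorem, avoiding the logarithmic loss appearing in the general Banach-space statement of Theorem \ref{thm: Batty-Duyckaerts}.

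First I would upgrade the hypothesized bound on the imaginary axis to a bound on a parabolic neighbourhood of $i\R$. The Neumann series
\[ (z+\A)^{-1} = (is+\A)^{-1}\sum_{n\geq 0}\bigl((is-z)(is+\A)^{-1}\bigr)^{n} \]
converges whenever $\abs{is-z}\cdot C\abs{s}^{\gamma}<1$, so that $\norm{(z+\A)^{-1}}\leq 2C\abs{\Im z}^{\gamma}$ in the region where $\abs{\Re z}\leq \tfrac{1}{2C}(1+\abs{\Im z})^{-\gamma}$ and $\abs{\Im z}$ is sufficiently large. Combined with $\sigma(\A)\cap i\R=\emptyset$ (handling the bounded piece of $i\R$) and the elementary estimate $\norm{(z+\A)^{-1}}\leq M/\Re z$ for $\Re z>0$, this yields a uniform bound $\norm{(z+\A)^{-1}}\leq C'(1+\abs{\Im z})^{\gamma}$ on a region of the form $\Re z\geq -\eta(1+\abs{\Im z})^{-\gamma}$.

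Next, for $x\in D(\A^{2})$ I would use the inverse-Laplace representation
\[ T(t)x = \frac{1}{2\pi i}\int_{\Gamma}e^{zt}(z+\A)^{-1}x\,dz, \]
valid on any vertical contour to the right of $\sigma(-\A)$, and deform $\Gamma$ to a contour $\Gamma_{R}$ that runs along $\Re z = -\eta(1+R)^{-\gamma}$ inside the strip $\abs{\Im z}\leq R$ and along the parabolic boundary $\Re z = -\eta(1+\abs{\Im z})^{-\gamma}$ outside. The middle portion contributes at most $e^{-\eta t(1+R)^{-\gamma}}$ times a polynomial factor in $R$, while the tails at $\abs{\Im z}\geq R$ are handled by inserting an $\A^{-1}$ factor (permissible since $x\in D(\A^{2})$) and applying the Hilbert-space Plancherel identity
\[ \int_{\R}\norm{(\alpha+is+\A)^{-1}y}^{2}\,ds = 2\pi\int_{0}^{\infty}e^{-2\alpha t}\norm{T(t)y}^{2}\,dt \]
to convert the $L^{2}$-in-$s$ size of the resolvent into the $L^{2}$-in-$t$ norm of the uniformly bounded semigroup. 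Balancing both contributions via the choice $R\sim t^{1/\gamma}$ yields $\norm{T(t)x}\leq C t^{-1/\gamma}\norm{x}_{D(\A)}$, and a density argument extends the estimate to all $x\in D(\A)$.

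The principal obstacle is the tail estimate on $\abs{\Im z}\geq R$: the resolvent norm $\abs{s}^{\gamma}$ is large and the exponential decay $\abs{e^{zt}} = e^{-\eta t(1+\abs{s})^{-\gamma}}$ becomes negligible there, so a naive $L^{\infty}$-in-$s$ estimate produces a divergent integral. The Hilbert-space Plancherel identity is the decisive tool because it bounds the tail in terms of an $L^{2}$-in-$t$ semigroup quantity that is uniformly finite under the boundedness hypothesis; this is precisely where Theorem \ref{thm: Borichev-Tomilov} improves upon Theorem \ref{thm: Batty-Duyckaerts}, which can only exploit pointwise-in-$s$ resolvent bounds and consequently incurs the extra logarithm in $M_{log}$.
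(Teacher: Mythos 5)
This statement is cited from \citet{BorichevTomilov2010}; the paper does not supply a proof, so there is nothing in the paper itself to compare your argument against. What follows is an assessment of your sketch on its own terms.

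Your high-level philosophy --- contour representation plus the Hilbert-space Plancherel identity to avoid the Banach-space logarithm --- is indeed the spirit of the Borichev--Tomilov argument. However, the sketch as written does not close. The most concrete problem is the balancing step. On the middle portion the integrand is at best bounded by $e^{-\eta t(1+R)^{-\gamma}}$ times a quantity that grows polynomially in $R$ (the resolvent norm is $\approx R^{\gamma}$ and the contour length is $\approx R$). With $R\sim t^{1/\gamma}$ you get $tR^{-\gamma}\sim 1$, so $e^{-\eta t(1+R)^{-\gamma}}$ is bounded \emph{below} by a positive constant and supplies no decay whatsoever; the middle contribution then grows like a power of $t$. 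To make the exponential supply even a factor $t^{-1/\gamma}$ one is forced to take $R\sim(t/\log t)^{1/\gamma}$, and then the bookkeeping of the polynomial prefactor reproduces exactly the logarithmic loss of Theorem~\ref{thm: Batty-Duyckaerts}. So the choice of $R$ you propose is self-defeating, and no alternative choice of $R$ fixes the middle-versus-tails trade-off within this scheme.

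A second issue is the Plancherel step itself. The identity
\[
 \int_{\R}\norm{(\alpha+is+\A)^{-1}y}^{2}\,ds = 2\pi\int_{0}^{\infty}e^{-2\alpha t}\norm{T(t)y}^{2}\,dt
\]
holds for $\alpha>0$, because only then is $(\alpha+is+\A)^{-1}y$ the $L^2$-Fourier transform of the integrable function $t\mapsto e^{-\alpha t}T(t)y$. Your tails, however, live on the parabolic arc $\Re z=-\eta(1+\abs{\Im z})^{-\gamma}<0$, where the right-hand side would involve $e^{-2\alpha t}$ with $\alpha<0$ and is therefore divergent for a merely bounded semigroup. One can try to move the tails back onto (or just to the right of) the imaginary axis, but then the contour deformation itself has to be rearranged and the Cauchy--Schwarz/Plancherel gain on the tails is of order $R^{-1/2}$ per inserted $\A^{-1}$, which again does not balance against the non-decaying middle. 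Finally, the closing ``density argument'' does not upgrade a bound of the form $\norm{T(t)x}\lesssim t^{-1/\gamma}\norm{x}_{D(\A^{2})}$ to one in terms of $\norm{x}_{D(\A)}$; by the standard moment inequality $\norm{T(t)(1+\A)^{-1}}^{2}\lesssim\norm{T(2t)(1+\A)^{-2}}$, so a $t^{-1/\gamma}$ bound at the $D(\A^{2})$ level only yields $t^{-1/(2\gamma)}$ at the $D(\A)$ level, which is not the claimed rate. The genuine Borichev--Tomilov argument is structurally different and does not reduce to contour deformation with Plancherel applied only to the tails; you would need to consult \citet{BorichevTomilov2010} (or, e.g., \citet{BattyChillTomilov2016}) for the actual mechanism.
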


\subsection{Singularity at zero and infinity}
If $\A$ is our wave operator, by Theorem \ref{thm: Preliminary estimates} (iii), it may happen that $0$ is a spectral point. Therefore it is convenient to have the following generalization of Theorem \ref{thm: Batty-Duyckaerts} at hand:
\begin{Theorem}[\cite{Martinez2011}]\label{thm: Martinez}
 Assume that $\sigma(\A)\cap i\R=\{0\}$. Assume that in addition to the condition (\ref{eq: resolvent estimate at infinity}) there exist $C > 0$, $0<s_1<1$ and a decreasing function $m:(0,s_1)\rightarrow[1,\infty)$ such that 
 \begin{align*}
  \forall \abs{s}\leq s_1: \norm{(is+\A)^{-1}}_{X\rightarrow X} \leq C m(C\abs{s}).
 \end{align*}
 Then there exist constants $C,t_0 > 0$ such that for all $t\geq t_0$ and $\xnice_0\in D(\A)\cap R(\A)$ we have 
 \begin{align*}
  \norm{T(t)\xnice_0}_{X} \leq C\left[\frac{1}{M_{log}^{-1}(\frac{t}{C})} + m_{log}^{-1}(\frac{t}{C}) + \frac{1}{t}\right]\norm{\xnice_0}_{D(\A)\cap R(\A)}.
 \end{align*}
 Here $M_{log}$ is defined as in Theorem \ref{thm: Batty-Duyckaerts} and $m_{log}(s) = m(s)(\log(1+m(s))-\log(s))$.
\end{Theorem}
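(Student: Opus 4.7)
The plan is to combine the Tauberian contour-integration approach behind Theorem \ref{thm: Batty-Duyckaerts} with an analogous treatment of the resolvent near $z=0$, exploiting $\xnice_0\in R(\A)$ to absorb the spectral singularity at the origin. Since $\xnice_0\in R(\A)$, I would first fix $\ynice\in D(\A)$ with $\A\ynice=\xnice_0$ and $\norm{\ynice}_{D(\A)}\leq C\norm{\xnice_0}_{D(\A)\cap R(\A)}$. The key identity
\begin{equation*}
 (z+\A)^{-1}\xnice_0 = \ynice - z(z+\A)^{-1}\ynice,
\end{equation*}
valid for $z\notin\sigma(-\A)$, shows that the resolvent applied to $\xnice_0$ is controlled near the origin by $(1+|z|m(C|z|))\norm{\ynice}$ --- the formal singularity of the resolvent at $z=0$ is replaced by mere growth of $|z|m(C|z|)$, and one can now imitate the Batty-Duyckaerts argument near $0$ instead of only near $\infty$.

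The next step is to pick a smooth contour $\Gamma=\Gamma_t$ to the left of $i\R$ whose horizontal distance from the imaginary axis at height $s$ is, up to a logarithmic correction, of order $1/M(|s|)$ for $|s|\geq s_1$ and $1/m(|s|)$ for $0<|s|\leq s_1$, with a tiny semicircular detour of radius $\sim 1/t$ around the origin. A Neumann-series argument around each point of $i\R\setminus\{0\}$ guarantees that $(z+\A)^{-1}$ is bounded (up to a universal factor) by $M(|\Im z|)$ resp.\ $m(|\Im z|)$ on $\Gamma$. Combining this with the Newman-Korevaar smoothing trick (to make the Dunford-Cauchy representation of $T(t)\xnice_0$ absolutely convergent) produces, after the standard deformation argument, a rigorous identity
\begin{equation*}
 T(t)\xnice_0 = \frac{1}{2\pi i}\int_{\Gamma} e^{zt}(z+\A)^{-1}\xnice_0\,dz + \text{(regularisation error)}.
\end{equation*}

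The contour integral splits into three regimes. On $|\Im z|\geq s_1$ the estimate from the proof of Theorem \ref{thm: Batty-Duyckaerts} applies verbatim and yields the $1/M_{log}^{-1}(t/C)$ term: $e^{zt}$ kills most of the contour because $\Re z\sim-1/M(|\Im z|)$ there, and the surviving piece is optimised by a logarithmic choice of how far to push $\Gamma$ to the left. On $0<|\Im z|\leq s_1$ the identical argument works with $m$ in place of $M$, but with the Step~1 identity rather than the raw bound $\norm{(z+\A)^{-1}}$ feeding the estimate; this delivers the $m_{log}^{-1}(t/C)$ term. On the small semicircle around $0$, the dominant contribution to $(z+\A)^{-1}\xnice_0$ is the harmless $\ynice$-term, and the corresponding integral, after closing the contour and using boundedness of $T(\cdot)\ynice$, evaluates to a term of size $C\norm{\ynice}/t$.

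The main obstacle is the joint construction and careful bookkeeping of the contour: one must iterate the Newman-Korevaar logarithmic smoothing on \emph{both} ends of the imaginary axis and cleanly isolate the tiny detour at $0$, all while ensuring the truncation error from making the inverse-Laplace integral absolutely convergent is itself absorbed into one of the three advertised terms. Everything else is an almost mechanical repetition of the proof of Theorem \ref{thm: Batty-Duyckaerts}, applied once near $\infty$ and once near $0$, with the range-reduction identity doing the crucial work of turning the pole of the resolvent at $0$ into a harmless bounded contribution.
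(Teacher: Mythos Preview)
The paper does not prove this theorem; it is quoted from \cite{Martinez2011} (with a pointer to \cite{BattyChillTomilov2016} for refinements), so there is no in-paper proof to compare against. Your sketch is the standard route and is essentially the argument in the cited literature: the range-reduction identity $(z+\A)^{-1}\A\ynice=\ynice-z(z+\A)^{-1}\ynice$ is precisely the device used to tame the spectral point at $0$, and the contour argument with Neumann-series extension to the left of $i\R$ together with the logarithmic optimisation is the Batty--Duyckaerts mechanism, applied once near $\infty$ and once near $0$. One small point worth tightening in your write-up is the choice of preimage $\ynice$: in the abstract setting $\A$ need not be injective, so the norm on $R(\A)$ should be taken as the quotient norm $\inf\{\norm{\ynice}_X:\A\ynice=\xnice_0\}$ (or one works on $\overline{R(\A)}$ via the mean-ergodic splitting), and the constant in $\norm{\ynice}_{D(\A)}\leq C\norm{\xnice_0}_{D(\A)\cap R(\A)}$ has to be justified through that definition rather than through a genuine inverse.
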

Concerning the relevance of the $1/t$-term we refer the reader to \cite[Section 8]{BattyChillTomilov2016}. In \cite[Theorem 8.4]{BattyChillTomilov2016} it was shown that in case of polynomial resolvent bounds on a Hilbert space one can get rid of the logarithmic loss.
\end{appendix}


\subsection*{Acknowledgments} I am most grateful to Ralph Chill and Eva Fa\u{s}angov\'a for helpful discussions during my work on the topic of the article and for reading and correcting the first version of this paper. I am also grateful to Otared Kavian for uncovering a gap in the proof of Theorem \ref{thm: Preliminary estimates} related to my ignorance about the borderline case of the trace theorem. Finally I want to thank Lars Perlich for helpful comments.


\bibliographystyle{plainnat}
\bibliography{Refs}

\hfill

Technische Universit\"{a}t Dresden, Fachrichtung Mathematik, Institut f\"{u}r Analysis, 01062, Dresden, Germany. Email: \textit{Reinhard.Stahn@tu-dresden.de}

\end{document}